\theoremstyle{plain}
\newtheorem{thm}{Theorem}[section]
\newtheorem{cor}[thm]{Corollary}
\newtheorem{lem}[thm]{Lemma}
\newtheorem{prop}[thm]{Proposition}
\theoremstyle{definition}
\newtheorem{defn}[thm]{Definition}
\theoremstyle{remark}
\newtheorem{rem}[thm]{Remark}
\newcommand{\bH}{\mathbb H}
\newcommand{\N}{\mathbb N}
\newcommand{\C}{\mathbb C}
\newcommand{\R}{\mathbb R}
\newcommand{\E}{\mathcal{E}}
\newcommand{\e}{\varepsilon}
\newcommand{\wt}{\widetilde}
\renewcommand{\l}{\ell}
\newcommand{\la}{\lambda}
\newcommand{\p}{\partial}
\title{Rigidity of equality of Lyapunov exponents for geodesic flows}
\author{Clark Butler}
\thanks{This material is based upon work supported by the National Science Foundation Graduate Research Fellowship under Grant No. \# DGE-1144082. }
\begin{document}

\begin{abstract}
We study the relationship between the Lyapunov exponents of the geodesic flow of a closed negatively curved manifold and the geometry of the manifold. We show that if each periodic orbit  of the geodesic flow has exactly one Lyapunov exponent on the unstable bundle then the manifold has constant negative curvature. We also show under a curvature pinching condition that equality of all Lyapunov exponents with respect to volume on the unstable bundle also implies that the manifold has constant negative curvature. We then study the degree to which one can emulate these rigidity theorems for the hyperbolic spaces of nonconstant negative curvature when the Lyapunov exponents with respect to volume match those of the appropriate symmetric space and obtain rigidity results under additional technical assumptions. The proofs use new results from hyperbolic dynamics including the nonlinear invariance principle of Avila and Viana and the approximation of Lyapunov exponents of invariant measures by Lyapunov exponents associated to periodic orbits which was developed by Kalinin in his proof of the Livsic theorem for matrix cocycles. We also employ rigidity results of Capogna and Pansu on quasiconformal mappings of certain nilpotent Lie groups. 
\end{abstract}
\maketitle
\begin{section}{Introduction}
A central question in geometric rigidity theory is the following: Suppose that a negatively curved Riemannian manifold $M$ shares some property $P$ with a negatively curved locally symmetric space $N$. Is $M$ isometric to $N$? The most famous example of such a rigidity theorem is the Mostow rigidity theorem: if $M$ and $N$ are real hyperbolic manifolds with isomorphic fundamental groups, then $M$ and $N$ are isometric. 

We can ask a different, more dynamical rigidity question: Suppose that the geodesic flow of a negatively curved Riemannian manifold $M$ shares some property $P$ with the geodesic flow of a negatively curved locally symmetric space $N$. Is there a $C^{1}$ time-preserving conjugacy between the geodesic flows of $M$ and $N$? A remarkable consequence of the minimal entropy rigidity theorem of Besson, Courtois, and Gallot \cite{BCG1} is that \emph{dynamical rigidity implies geometric rigidity}, in the sense that if there is a $C^{1}$ time preserving conjugacy between the geodesic flows of $M$ and $N$, then $M$ and $N$ are homothetic. Recall that two Riemannian manifolds $(M,d)$ and $(N,\rho)$ with distances $d$ and $\rho$ respectively are \emph{homothetic} if there is a constant $c > 0$ such that $(M,d)$ is isometric to $(N,c\rho)$. This implies that it is possible to characterize the geometry of a locally symmetric space $N$ purely by the dynamics of its geodesic flow. For some examples of the numerous rigidity problems to which this has been applied, see \cite{BL},\cite{BFL}, and \cite{Ham2} as well as the survey articles \cite{BCG2},\cite{Spa}. 

The particular property we will be interested in studying is the conformal geometry and dynamics of the geodesic flow of $M$ acting on the unstable foliation of the unit tangent bundle of $M$. We will show that if this action of the geodesic flow sufficiently resembles that of the corresponding action for the geodesic flow of a negatively curved locally symmetric space then $M$ is actually homothetic to a negatively curved locally symmetric space. Here ``sufficiently resembles" will be measured by the Lyapunov exponents of the geodesic flow and, in the case of negatively curved locally symmetric spaces of nonconstant negative curvature, the regularity of certain dynamically defined subbundles of the unstable bundle. 

Before proceeding further, we fix some notation. Throughout this paper $M$ will denote an $m$-dimensional closed Riemannian manifold of negative curvature with universal cover $\widetilde{M}$. We will always assume $m \geq 3$. We write $SM$ for the unit tangent bundle of $M$. The time-$t$ map of the geodesic flow on $SM$ will be denoted by $g^{t}$. We endow $SM$ with the Sasaki metric, giving $SM$ the structure of a closed Riemannian manifold with norm $\| \cdot \|$ and distance $d$.  We let $\theta$ be the canonical contact 1-form on $SM$ preserved by $g^{t}$. The letter $C$ will be used freely as a multiplicative constant which is independent of whatever parameters are under consideration. 

Since $M$ is negatively curved, $g^{t}$ is an Anosov flow: there is a $Dg^{t}$-invariant splitting $TSM = E^{u} \oplus E^{c} \oplus E^{s}$ with $E^{c}$ tangent to the vector field generating $g^{t}$, and there exist $0 < \nu < 1$, $C > 0$ such that for $v^{u} \in E^{u}$, $v^{s} \in E^{s}$, and $t > 0$,
\[
\|Dg^{t}(v^{s})\| \leq C\nu^{t}\|v^{s}\|, \; \; \|Dg^{-t}(v^{u})\| \leq C\nu^{t}\|v^{u}\|
\]
$E^{u}$, $E^{s}$, and $E^{c}$ are called the \emph{unstable}, \emph{stable}, and \emph{center} subbundles respectively. We write $E^{cu} := E^{u} \oplus E^{c}$ and $E^{cs}:= E^{c} \oplus E^{s}$ for the \emph{center unstable} and \emph{center stable} subbundles respectively. Each of the distributions $E^{u},E^{c},E^{s},E^{cu},E^{cs}$ is uniquely integrable; we denote the corresponding foliations by $W^{*}$, $* = u,c,s,cu,cs$, with $W^{*}(x)$ being the leaf containing $x$. We consider each leaf of these foliations to carry the induced Riemannian metric from the Sasaki metric on $SM$. We let $W^{*}_{r}(x)$ be a ball of radius $r$ centered at $x$ in the leaf $W^{*}(x)$. For a $g^{t}$-invariant subbundle $\E$ of $TSM$, we write $Dg^{t}|\E$ for the restriction of $Dg^{t}$ to $\E$.

Let $\mathbb{K} = \R,\C, \bH,$ or $\mathbb{O}$ be the division algebra of real numbers, complex numbers, quaternions, and octonions respectively. Associated to each of these are the complex, quaternionic, and Cayley hyperbolic spaces $H_{\mathbb{K}}^{k}$ of dimension $k$, $2k$, $4k$, and $8$ respectively. These give the complete list of negatively curved Riemannian  symmetric spaces. We normalize the metrics of these spaces so that they have maximal curvature $-1$. Define $r_{\mathbb{K}} := \dim_{\R}\mathbb{K}$.  

We say that $Dg^{t}|E^{u}$ is \emph{uniformly quasiconformal} if there is a constant $C > 0$ independent of the point $p \in SM$ and $t$ such that for any pair of unit vectors $v,w \in E^{u}_{p}$,
\[
\|Dg^{t}(v)\| \cdot \|Dg^{t}(w)\|^{-1} \leq C.
\]
For closed negatively curved Riemannian manifolds of dimension at least $3$, combined work of Gromov, Kanai, Sullivan and Tukia shows that if the sectional curvatures $K$ of $M$ satisfy $-4 < K \leq -1$ and the action of the geodesic flow on $E^{u}$ is uniformly quasiconformal, then $M$ is homotopy equivalent to a real hyperbolic manifold $N$, and there is a $C^{1}$ time preserving conjugacy between the geodesic flows of $M$ and $N$ (\cite{Gr},\cite{Kan1},\cite{Su},\cite{Tu}). When combined with the minimal entropy rigidity theorem of Besson, Courtois, and Gallot \cite{BCG1}, this implies that $M$ is homothetic to $N$. 

Our first theorem is an improvement of this result. For a periodic point $p$ of $g^{t}$, let $\l(p)$ be the period of $p$. Let $\chi_{1}^{(p)},\dots,\chi_{m-1}^{(p)}$ be the complex eigenvalues of $Dg^{\l(p)}_{p}: E^{u}_{p} \rightarrow E^{u}_{p}$, counted with the multiplicity of their generalized eigenspaces.
\begin{thm}\label{perhyp}
Let $M$ be an $m$-dimensional closed negatively curved Riemannian manifold. Suppose that 
\[
\left|\chi_{i}^{(p)}\right| = \left|\chi_{j}^{(p)}\right|, \; 1 \leq i,j\leq m-1,
\] 
for every periodic point $p$ of the geodesic flow on $SM$. Then $M$ is homothetic to a compact quotient of $H_{\R}^{m}$. 
\end{thm}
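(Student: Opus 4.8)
The plan is to show that the hypothesis on periodic eigenvalues forces the derivative cocycle $Dg^t|E^u$ to be uniformly quasiconformal, and then to feed this into the combined theorem of Gromov, Kanai, Sullivan and Tukia together with the minimal entropy rigidity theorem of Besson, Courtois and Gallot. Since the Gromov--Kanai--Sullivan--Tukia theorem is quoted above only under the additional pinching hypothesis $-4 < K \le -1$, part of the argument is to dispose of this: after rescaling the metric so that $\sup K = -1$ one must either verify the pinching directly or observe that uniform quasiconformality of $Dg^t|E^u$ already supplies the smooth affine structure on the unstable leaves on which Kanai's conjugacy is built.

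The first real step normalizes the cocycle by its determinant: set $B(t,p) = \bigl|\det(Dg^t_p|E^u)\bigr|^{-1/(m-1)}\, Dg^t_p|E^u$, a H\"older cocycle over the geodesic flow with values in $\mathrm{SL}^{\pm}(m-1,\R)$. The hypothesis says precisely that for each periodic point $p$ the return matrix $B(\ell(p),p)$ has all eigenvalues of modulus one, hence zero Lyapunov exponents. By Kalinin's theorem on the approximation of the Lyapunov exponents of an arbitrary invariant measure by those of periodic orbits, it follows that $B$ has all Lyapunov exponents equal to zero with respect to every $g^t$-invariant ergodic probability measure; equivalently, $Dg^t|E^u$ has one-point Lyapunov spectrum with respect to every such measure.

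The second step promotes this to a continuous invariant object by means of the nonlinear invariance principle of Avila and Viana. Consider the bundle over $SM$ whose fiber at $p$ is the space of conformal structures on $E^u_p$, a non-positively curved symmetric space, together with the natural $Dg^t$-action; this action is fiberwise isometric, so its fiberwise Lyapunov exponents vanish, and the base exponents of $B$ vanish as well. The invariance principle then produces a measurable $g^t$-invariant conformal structure on $E^u$ that is invariant under the stable and unstable holonomies. Using the fiber-bunching of the derivative cocycle of an Anosov flow, the density of periodic orbits, and a careful analysis of the periodic return data, one upgrades this to a H\"older continuous $g^t$-invariant conformal structure on all of $E^u$; by compactness of $SM$ such a structure is uniformly comparable to the Sasaki metric, so $Dg^t|E^u$ is uniformly quasiconformal. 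The Gromov--Kanai--Sullivan--Tukia theorem then yields a $C^1$ time-preserving conjugacy between the geodesic flows of $M$ and a closed real hyperbolic manifold $N$, and the theorem of Besson, Courtois and Gallot forces $M$ to be homothetic to $N$, hence to a compact quotient of $H^m_{\R}$.

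I expect the decisive difficulty to lie in the second step: passing from the measure-theoretic statement ``one-point Lyapunov spectrum for every invariant measure'' to the uniform statement of quasiconformality. One-point spectrum by itself does not produce a continuous, or even a measurable, invariant conformal structure for a general H\"older cocycle (a parabolic matrix cocycle over a subshift already fails), so the argument must genuinely exploit the special structure of the geodesic-flow derivative cocycle -- its fiber-bunching, its compatibility with the stable and unstable holonomies, and the structure of the periodic return data -- combining the nonlinear invariance principle with H\"older-section regularity results; in particular one must rule out parabolic Jordan behaviour along periodic orbits, which the eigenvalue hypothesis does not obviously preclude on its own. By comparison, the curvature pinching and the closing appeals to the theorems of Gromov--Kanai--Sullivan--Tukia and Besson--Courtois--Gallot are routine or are direct citations.
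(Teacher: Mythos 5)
Your overall skeleton does match the paper's (Kalinin's periodic approximation to get equal extremal exponents for every invariant measure, then holonomy/invariance-principle machinery to get a continuous invariant conformal structure, then Kanai plus Besson--Courtois--Gallot), but the two steps you treat as routine or leave open are exactly where the paper's real work lies, and as written they are genuine gaps. First, you invoke ``the fiber-bunching of the derivative cocycle of an Anosov flow'' as though it were automatic. It is not: with no pinching hypothesis the spread of expansion rates inside $E^{u}$ can dominate the contraction on $E^{s}$, so $Dg^{t}|E^{u}$ need not be fiber bunched, the stable and unstable holonomies need not exist, and neither the Avila--Viana invariance principle in its Kalinin--Sadovskaya form nor any H\"older-regularity upgrade can get started. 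The paper extracts bunching from the hypothesis itself: since the flow is contact, equality of the moduli of the eigenvalues of $Dg^{\ell(p)}_{p}$ on $E^{u}_{p}$ forces the eigenvalues on $E^{s}_{p}$ to be their reciprocals, so the flow is $\alpha$-bunched along every periodic orbit for every $\alpha<2$, and Hasselblatt's theorem \cite{Has1} then yields global $1$-bunching, hence a $C^{1}$ Anosov splitting and fiber bunching of $Dg^{t}|E^{u}$. This same step also disposes of your worry about the curvature pinching in the Gromov--Kanai--Sullivan--Tukia chain: once a continuous invariant conformal structure is produced one quotes Kanai's theorem (Theorem 1 of \cite{Kan1}) directly, with no pinching and no separate quasiconformality argument.

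Second, and as you yourself concede, vanishing exponents do not by themselves produce an invariant conformal structure, measurable or continuous; what the machinery (Lemma \ref{amen}, applied with the Liouville measure, which is ergodic, fully supported and has local product structure) actually gives is, on a finite cover, a continuous holonomy-invariant flag of subbundles with invariant conformal structures only on the successive quotients. The decisive missing step is to show this flag is trivial, i.e.\ that $E^{u}$ admits no proper measurable $Dg^{t}$-invariant subbundle. The paper proves this in Lemma \ref{hypirr}, and the proof genuinely needs the $C^{1}$ splitting obtained above: a measurable invariant subbundle coincides a.e.\ with a continuous holonomy-invariant one; the Kanai connection is flat and torsion-free along unstable leaves, so the subbundle is parallel and hence uniquely integrable by a $C^{1}$ Frobenius argument; the resulting foliation is equivariant under the center-stable holonomies between unstable leaves and therefore descends to a $\pi_{1}(M)$-invariant foliation of the visual boundary, which must be trivial \cite{Ham1},\cite{F}. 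The ``parabolic Jordan behaviour'' you flag is precisely what a nontrivial flag from amenable reduction would encode, and it is excluded not by further analysis of periodic return data but by this geometric boundary argument; your proposal names the difficulty but supplies no argument for it. With the flag trivial, Lemma \ref{confcover} pushes the conformal structure back down from the finite cover, and the closing appeals to \cite{Kan1} and \cite{BCG1} proceed as you describe.
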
 
Theorem \ref{perhyp} implies that we can characterize a real hyperbolic manifold by the behavior of a countable collection of linear maps $Dg^{\l(p)}_{p}: E^{u}_{p} \rightarrow E^{u}_{p}$ associated to its geodesic flow. Furthermore, we do not even require these linear maps to be conformal; we only require that for each map $Dg^{\l(p)}_{p}$, all of its eigenvalues have the same absolute value. We've also removed the curvature assumption on $M$. 

We next state a more ergodic theoretic formulation of Theorem \ref{perhyp}. Let $\E$ be a vector bundle over $SM$ carrying a norm $\|\cdot\|$ and let $\pi: \E \rightarrow SM$ be the projection map. A \emph{linear cocycle} over $g^{t}$ is a map $\mathcal{A}: \E \times \R \rightarrow \E$ satisfying for every $t \in \R$, 
\[
\pi(\mathcal{A}(v,t)) = g^{t}(\pi(v)),
\] 
\[
\mathcal{A}(-,t)|_{\E_{x}}: \E_{x} \rightarrow \E_{g^{t}(x)} \; \text{is a linear isomorphism,}
\]
and for any $t,s \in \R$, 
\[
\mathcal{A}(\mathcal{A}(v,s),t) =  \mathcal{A}(v,t+s)
\]
We adopt the notation $A^{t}$ for the map $\mathcal{A}(-,t)$. We will principally be concerned with the linear cocycles obtained by restricting $Dg^{t}$ to invariant subbundles $\E$ of $T(SM)$. For a linear cocycle $A^{t}$ over $g^{t}$ and an ergodic $g^{t}$-invariant measure $\mu$, we define the \emph{extremal Lyapunov exponents} of $A^{t}$ with respect to $\mu$ to be
 \[
\la_{+}(A^{t},\mu) := \inf_{t > 0}\frac{1}{t}\int \log \|A^{t}\| d\mu(x)
\]
\[
\la_{-}(A^{t},\mu)= \sup_{t > 0}\frac{1}{t}\int \log \|A^{-t}\|^{-1} d\mu(x)
\]
We next introduce our hypotheses on the $g^{t}$-invariant measures that we will consider. For each point $x \in SM$ there is an open neighborhood $U_{x}$ of $x$ such that there is a  homeomorphism 
\[
\iota_{x}: W^{cu}_{r}(x) \times W^{s}_{r}(x) \rightarrow U_{x}
\]
for some $r > 0$ given by mapping a pair $(y,z) \in W^{cu}_{r}(x) \times W^{s}_{r}(x)$ to the unique intersection point of $W^{s}(y)$ and $W^{cu}(z)$. A $g^{t}$-invariant probability measure $\mu$ has \emph{local product structure} if for each $x \in SM$ there are measures $\mu_{x}^{cu}$ and $\mu_{x}^{s}$ on $W^{cu}_{r}(x)$ and $W^{s}_{r}(x)$ respectively and a continuous function $\psi_{x}: U_{x} \rightarrow (0,\infty)$ such that 
\[
(\iota_{x})_{*}(d\mu_{x}^{cu} \times d\mu_{x}^{s}) = \psi_{x} d\mu |_{U_{x}}
\]
This says that locally $\mu$ may be expressed as the product of measures on the stable and center-unstable manifolds. Any equilibrium state associated to a H\"older continuous potential for the geodesic flow has local product structure \cite{BR}; these include the Liouville volume on $SM$, the Bowen-Margulis measure of maximal entropy for the geodesic flow, and the harmonic measure corresponding to the hitting probability of Brownian motion inside the universal cover $\widetilde{M}$ of $M$ on the visual boundary $\p \widetilde{M}$.

A negatively curved Riemannian manifold $M$ has relatively $1/4$-pinched sectional curvatures if for each $p \in M$ and each quadruple of tangent vectors $X,Y,W,Z \in T_{p}M$ such that $X$ and $Y$ are linearly independent and $W$ and $Z$ are linearly independent, we have  
\[
K(X,Y) > 4K(W,Z),
\]
where $K(X,Y)$ is the sectional curvature of the plane spanned by $X$ and $Y$. 

\begin{thm}\label{hyp}
Let $M$ be a closed negatively curved Riemannian manifold with relatively $1/4$-pinched sectional curvatures. Let $\mu$ be an ergodic, fully supported $g^{t}$-invariant measure with local product structure. If $\la_{+}(Dg^{t}|E^{u},\mu) = \la_{-}(Dg^{t}|E^{u},\mu)$, then $M$ is homothetic to a compact quotient of $H_{\R}^{m}$.  
\end{thm}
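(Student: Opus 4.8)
The plan is to show that equality of the extremal Lyapunov exponents with respect to the measure $\mu$ forces $Dg^t|E^u$ to be uniformly quasiconformal, after which the theorem reduces to Theorem~\ref{perhyp} (or directly to the Gromov--Kanai--Sullivan--Tukia result together with \cite{BCG1}; the $1/4$-pinching is exactly the hypothesis $-4 < K \le -1$ after normalizing the maximal curvature to $-1$, which the relative pinching permits). The central tool is the nonlinear invariance principle of Avila--Viana: the hypothesis $\la_+(Dg^t|E^u,\mu) = \la_-(Dg^t|E^u,\mu)$ says that all Lyapunov exponents of the cocycle $Dg^t|E^u$ on $E^u$ coincide (equal to some $\la$), so the cocycle has ``one exponent with multiplicity $m-1$'' at $\mu$-almost every point. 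After projectivizing (or passing to the induced cocycle on the flag bundle / conformal frame bundle of $E^u$), one obtains a cocycle with zero Lyapunov exponents along the fibers, and the invariance principle produces a measurable family of invariant conformal structures on the fibers $E^u_x$, defined $\mu$-a.e.

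The key steps in order are: (1) First I would set up the derivative cocycle on $E^u$ and verify its fiber-bunching / domination properties — here negative curvature and the $1/4$-pinching give the needed spectral gap between $E^u$ and the rest of $TSM$, and the relative pinching controls the internal expansion rates of $Dg^t|E^u$, so the cocycle is fiber-bunched; this is where I invoke Avila--Viana to get an $s$-invariant and $u$-invariant (hence bi-invariant, by local product structure of $\mu$) measurable section of the bundle of conformal structures on $E^u$. (2) Next, since $\mu$ is fully supported and the conformal structure is holonomy-invariant along both stable and unstable foliations inside $E^u$, I would upgrade measurability to continuity: a measurable $Dg^t$-invariant conformal structure that is invariant under the (continuous) stable and unstable holonomies of a transitive Anosov flow, with $\mu$ fully supported, must agree a.e.\ with a continuous one — this is a standard accessibility/continuity argument (cf. the arguments in Kalinin--Sadovskaya, Avila--Santamaria--Viana). (3) A continuous $Dg^t$-invariant conformal structure on $E^u$ immediately gives uniform quasiconformality of $Dg^t|E^u$: conjugating by the (bounded, continuous) field of conformal structures turns $Dg^t|E^u$ into a cocycle of conformal linear maps, so $\|Dg^t(v)\|\cdot\|Dg^t(w)\|^{-1}$ is bounded uniformly in $t$ and the point. (4) Finally, apply the cited rigidity package: uniform quasiconformality of $Dg^t|E^u$ on a closed negatively curved $M$ of dimension $\ge 3$ with $-4 < K \le -1$ yields a $C^1$ time-preserving conjugacy to the geodesic flow of a real hyperbolic manifold $N$, and then \cite{BCG1} upgrades this to $M$ being homothetic to a compact quotient of $H_\R^m$.

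I expect the main obstacle to be step (1)--(2): ensuring the derivative cocycle on $E^u$ is genuinely fiber-bunched so that the Avila--Viana invariance principle applies (the internal rates of $Dg^t|E^u$ must be dominated by the gap to the stable/center directions, which is exactly what relative $1/4$-pinching buys — but this needs a careful verification comparing the worst expansion/contraction rates inside $E^u$ to the Anosov rates), and then promoting the resulting measurable invariant conformal structure to a continuous one using full support of $\mu$ and holonomy invariance. A secondary subtlety is that the invariance principle is naturally stated for invariant measures of the projectivized cocycle that project to $\mu$; I would need to check that the a.e.-defined conformal structure is indeed $g^t$-equivariant (not just the existence of some invariant measure on the projective bundle), which again uses that all exponents on $E^u$ are equal so that no proper invariant subbundle with a different exponent can appear.
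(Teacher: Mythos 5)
There is a genuine gap at the heart of your step (1). Equality of the extremal exponents $\la_{+}(Dg^{t}|E^{u},\mu)=\la_{-}(Dg^{t}|E^{u},\mu)$ does \emph{not} by itself produce an invariant (even measurable) conformal structure on $E^{u}$, nor uniform quasiconformality: a cocycle conjugate to a scalar times a constant unipotent block has all Lyapunov exponents equal, yet its quasiconformal distortion grows polynomially in $t$ and it preserves a proper invariant subbundle \emph{with the same exponent}. What the Avila--Viana invariance principle, as distilled by Kalinin--Sadovskaya, actually yields from equal extremal exponents is continuous amenable reduction (Lemma \ref{amen} in the paper): on a finite cover one gets a continuous, holonomy-invariant flag $\E^{1}\subsetneq\cdots\subsetneq\E^{k}=E^{u}$ with invariant conformal structures only on the quotients $\E^{i}/\E^{i-1}$. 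To obtain a conformal structure on all of $E^{u}$ one must prove the flag is trivial, i.e.\ rule out proper measurable $Dg^{t}$-invariant subbundles of $E^{u}$; your closing remark that ``no proper invariant subbundle with a different exponent can appear'' does not address this, because the subbundles produced by amenable reduction carry the \emph{same} exponent and cannot be excluded on exponent grounds. The invariance principle itself only upgrades such objects to continuous, holonomy-invariant ones (Lemma \ref{contsub}); it does not kill them.

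The paper closes exactly this gap with Lemma \ref{hypirr}, and that argument is geometric, not dynamical bookkeeping: relative $1/4$-pinching gives $1$-bunching by Hasselblatt, hence a $C^{1}$ Anosov splitting and the Kanai connection; a continuous holonomy-invariant subbundle of $E^{u}$ is parallel for this connection, hence uniquely integrable along $W^{u}$, and the resulting foliation descends to a $\pi_{1}(M)$-invariant $C^{1}$ foliation of $\p\widetilde{M}$, which must be trivial by Hamenst\"adt's theorem. Only then does Lemma \ref{confcover} push the conformal structure from the finite cover back to $E^{u}$, after which the conclusion follows as in Theorem \ref{perhyp} (Kanai's theorem plus Besson--Courtois--Gallot). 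Without an argument of this kind your steps (2)--(4) have nothing to act on. A secondary inaccuracy: relative $1/4$-pinching is a pointwise comparison of curvatures and is strictly weaker than $-4<K\leq-1$ after normalization, so you cannot reduce ``directly'' to the Gromov--Kanai--Sullivan--Tukia statement; the paper uses the pinching only through Hasselblatt's $1$-bunching (to get the $C^{1}$ splitting, fiber bunching, and the Kanai connection) and then invokes Kanai's conformal-structure theorem rather than the uniform quasiconformality package.
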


A natural followup question to Theorem \ref{hyp} is the following, 

\textbf{Question}: Is $M$ homothetic to a compact quotient of $H_{\R}^{m}$ if we only assume that $\la_{+}(Dg^{t}|E^{u},\mu) = \la_{-}(Dg^{t}|E^{u},\mu)$? 

A result related to this question was proven by Yue \cite{Yue}, who showed that if $E^{u}$ has no measurable subbundles invariant under $Dg^{t}$ then $M$ is homothetic to a real hyperbolic manifold, without any pinching assumptions on the curvature of $M$. However the proof presented in \cite{Yue} appears to have a significant gap which is discussed in Remark \ref{error} at the end of Section \ref{firstthm}. In a recent preprint \cite{Bu} we corrected this gap to recover Yue's original result. 

Our final two theorems are extensions of Theorem \ref{hyp} to the negatively curved symmetric spaces of nonconstant negative curvature. We first recall the structure of the geodesic flow on $H^{m/r_{\mathbb{K}}}_{\mathbb{K}}$ for $\mathbb{K} = \C, \bH,\mathbb{O}$. The horospheres in $H^{m/r_{\mathbb{K}}}_{\mathbb{K}}$ naturally have the structure of an $(m-1)$-dimensional 2-step  nilpotent Lie group $G_{\mathbb{K}}^{m}$. We fix a left-invariant $(m-r_{\mathbb{K}})$-dimensional distribution $\mathcal{T}_{\mathbb{K}}^{m}$ on $G_{\mathbb{K}}^{m}$ which is transverse to the tangent distribution of the center (as a Lie group) of $G_{\mathbb{K}}^{m}$. We denote the tangent distribution of the center of $G_{\mathbb{K}}^{m}$ by $\mathcal{V}^{m}_{\mathbb{K}}$. 

When we identify the horospheres of $H^{m/r_{\mathbb{K}}}_{\mathbb{K}}$ with $G_{\mathbb{K}}^{m}$ the geodesic flow $g^{t}_{\mathbb{K}}$ of the symmetric metric $d_{\mathbb{K}}$ acts by an expanding automorphism of $G_{\mathbb{K}}^{m}$ which leaves the splitting $TG_{\mathbb{K}}^{m} = \mathcal{T}_{\mathbb{K}}^{m} \oplus \mathcal{V}^{m}_{\mathbb{K}}$ invariant, expands $\mathcal{T}_{\mathbb{K}}^{m}$ by a factor of $e^{t}$, and expands $\mathcal{V}^{m}_{\mathbb{K}}$ by a factor of $e^{2t}$. 

We next need an additional dynamical definition: a \emph{dominated splitting} for a linear cocycle $\mathcal{A}: \E \rightarrow \E$ over $g^{t}$ is an $\mathcal{A}$-invariant direct sum splitting $\E = \E^{1} \oplus \E^{2}$ such that there is some norm $\| \cdot \|$ on $\E$, some $C > 0$ and some $0 < \la < 1$ satisfying for every $p \in SM$, 
\[
\|A^{t}_{p}|\E^{2}_{p}\| \leq C\la^{t}\|A^{-t}_{g^{t}p}|\E^{1}_{p}\|^{-1}, \; t > 0
\]
Dominated splittings are stable under $C^{0}$-perturbations of $\mathcal{A}$ \cite{Shu}. As we saw in the previous paragraph, the geodesic flow of the symmetric metric $d_{\mathbb{K}}$ on $\mathbb{H}_{\mathbb{K}}^{m/r_{\mathbb{K}}}$ admits a dominated splitting of the unstable bundle given in horospherical coordinates by $TG_{\mathbb{K}}^{m} = \mathcal{T}_{\mathbb{K}}^{m} \oplus \mathcal{V}^{m}_{\mathbb{K}}$. Hence if the metric $d$ on a cocompact quotient $M = \mathbb{H}_{\mathbb{K}}^{m/r_{\mathbb{K}}}/ \Gamma$ is $C^{2}$ close to the symmetric metric $d_{\mathbb{K}}$ then the geodesic flow $g^{t}$ of $d$ also admits a dominated splitting on the unstable bundle, $E^{u} = H^{u} \oplus V^{u}$, where $H^{u}$ is the weaker of the two expanding directions. 

The Lie group $G_{\mathbb{K}}^{m}$ carries a natural left-invariant subriemannian metric which is the Carnot-Caratheodory metric defined by the distribution $\mathcal{T}^{m}_{\mathbb{K}}$. With respect to this metric the expanding automorphism $g^{t}_{\mathbb{K}}$ is conformal. Conformal mappings with respect to this metric exhibit strong rigidity properties much like the conformal maps of $\R^{k}$ for $k \geq 3$; all conformal maps from one domain of $G_{\mathbb{K}}^{m}$ to another are the restrictions of a projective automorphism of the group.  For further discussion and details we refer to\cite{P2}.

In order to apply the rigidity of conformal maps of $G_{\mathbb{K}}^{m}$ to the study of the rigidity of the geodesic flow $g^{t}$ of metrics close to $d_{\mathbb{K}}$, there are two requirements: we need that $Dg^{t}| H^{u}$ is conformal we also need charts which identify $H^{u}$ with the left-invariant distribution $\mathcal{T}^{m}_{\mathbb{K}}$ in $G_{\mathbb{K}}^{m}$. Our next theorem states that these two requirements are sufficient to obtain a rigidity theorem. As in Theorem \ref{hyp}, it's sufficient to require only that the extremal Lyapunov exponents of $Dg^{t}|H^{u}$ with respect to a sufficiently structured measure $\mu$ are equal in order to guarantee conformality of $Dg^{t}|H^{u}$. 

\begin{thm}\label{weakchyp}
Let $M = \mathbb{H}_{\mathbb{K}}^{m/r_{\mathbb{K}}}/ \Gamma$ be a cocompact quotient of a symmetric space of nonconstant negative curvature with symmetric metric $d_{\mathbb{K}}$. There is a $C^{2}$ open neighborhood $\mathcal{U}$ of $d_{\mathbb{K}}$ in the space of Riemannian metrics on $M$ such that if $d \in \mathcal{U}$ has geodesic flow $g^{t}$ which satisfies 
\begin{enumerate}
\item  $\la_{+}(Dg^{t}|H^{u},\mu) = \la_{-}(Dg^{t}|H^{u},\mu)$ for some ergodic, fully supported $g^{t}$-invariant measure $\mu$ with local product structure.
\item There is an $r > 0$ such that for every $p \in SM$ there is a $C^{1}$ embedding $\Psi_{p}: W^{u}_{r}(p) \rightarrow G_{\mathbb{K}}$ with $D\Psi_{p}(H_{q}^{u}) = (\mathcal{T}_{\mathbb{K}}^{m})_{\Psi_{p}(q)}$. 
\end{enumerate}
Then $d$ is homothetic to $d_{\mathbb{K}}$. 
\end{thm}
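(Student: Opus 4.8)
The plan is to use the two hypotheses to establish the two ``requirements'' isolated just before the statement --- conformality of $Dg^{t}|H^{u}$, and charts realizing $(W^{u},H^{u})$ inside $(G_{\mathbb{K}}^{m},\mathcal{T}_{\mathbb{K}}^{m})$ --- and then to run the Carnot-Caratheodory analogue of the argument of Gromov, Kanai, Sullivan, and Tukia, with the rigidity theorem of Capogna and Pansu playing the role of the rigidity of Sullivan and Tukia, so as to produce a $C^{1}$ time-preserving conjugacy between the geodesic flow $g^{t}$ of $d$ and the geodesic flow $g^{t}_{\mathbb{K}}$ of $d_{\mathbb{K}}$. The minimal entropy rigidity theorem of Besson, Courtois, and Gallot then forces $d$ and $d_{\mathbb{K}}$ to be homothetic.

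The $C^{2}$-closeness of $d$ to $d_{\mathbb{K}}$ first guarantees persistence of the dominated splitting $E^{u}=H^{u}\oplus V^{u}$, with $H^{u}$ of rank $m-r_{\mathbb{K}}$, so that $H^{u}$ is available. The conformality of $Dg^{t}|H^{u}$ is then obtained exactly as in the proof of Theorem~\ref{hyp}, with $E^{u}$ there replaced by $H^{u}$ here: equality in hypothesis (1) of the extremal Lyapunov exponents of $Dg^{t}|H^{u}$ over the ergodic, fully supported measure $\mu$ with local product structure, combined with the nonlinear invariance principle of Avila and Viana and with Kalinin's approximation of the Lyapunov exponents of $\mu$ by those over periodic orbits, yields a $g^{t}$-invariant H\"older continuous conformal structure on $H^{u}$ that is moreover invariant under the stable and unstable holonomies; equivalently, $Dg^{t}|H^{u}$ is conformal for a H\"older continuous Riemannian metric $\|\cdot\|_{0}$ on $H^{u}$, and in particular uniformly quasiconformal.

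Next I would fix $p \in SM$ and, using the $C^{1}$ embeddings $\Psi_{q}$ of hypothesis (2) together with the fact that $g^{t}$ expands $W^{u}$-leaves and is conjugated by these charts to the expanding automorphism of $G_{\mathbb{K}}^{m}$, glue a $C^{1}$ developing map $D_{p}\colon W^{u}(p)\to G_{\mathbb{K}}^{m}$ carrying $H^{u}$ to $\mathcal{T}_{\mathbb{K}}^{m}$. Pushing $\|\cdot\|_{0}$ forward by $D_{p}$ gives a H\"older metric on $\mathcal{T}_{\mathbb{K}}^{m}$ which, by compactness of $SM$, is uniformly comparable to a left-invariant one, so the associated Carnot-Caratheodory distance on $D_{p}(W^{u}(p))$ is bi-Lipschitz to the standard one and, $\mathcal{T}_{\mathbb{K}}^{m}$ being bracket-generating, induces the correct topology on the leaf. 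Since the conformal structure produced above is $g^{t}$-invariant and holonomy-invariant, the (center-)stable holonomy maps between unstable leaves, read in the charts $D_{p}$, become homeomorphisms between domains of $G_{\mathbb{K}}^{m}$ that are quasiconformal for the standard Carnot-Caratheodory structure with uniformly bounded dilatation. This is the crux of the argument, and the step I expect to be the main obstacle: the conformality of the preceding paragraph is purely dynamical and a priori unrelated to the sub-Riemannian geometry of $G_{\mathbb{K}}^{m}$, while hypothesis (2) supplies only $C^{1}$ charts --- and a $C^{1}$ diffeomorphism preserving $\mathcal{T}_{\mathbb{K}}^{m}$ need not be Carnot-Caratheodory quasiconformal --- so the real work is to convert invariance of the dynamical conformal structure into honest, uniformly bounded Carnot-Caratheodory dilatation of the chart transitions and holonomies, and to do so compatibly with the topology of the unstable and stable foliations so that the local picture globalizes.

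Finally, the leaves $W^{u}(p)$, now modeled on $G_{\mathbb{K}}^{m}$, together with the stable holonomies of $g^{t}$, assemble into the ideal boundary $\p\widetilde{M}$ of $(\widetilde{M},d)$ and the action on it of $\Gamma = \pi_{1}(M)$; by the preceding paragraph the canonical $\Gamma$-equivariant identification of $\p\widetilde{M}$ with $\p\mathbb{H}_{\mathbb{K}}^{m/r_{\mathbb{K}}} \cong G_{\mathbb{K}}^{m}\cup\{\infty\}$ (induced by the identity map $(\widetilde{M},d)\to(\widetilde{M},d_{\mathbb{K}})$, which is a bi-Lipschitz quasi-isometry between Gromov hyperbolic spaces) is quasiconformal for the model Carnot-Caratheodory structure. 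The regularity theorem of Capogna and Pansu then makes this identification a smooth map whose Pansu differential is almost everywhere an automorphism, and together with Pansu's classification of the conformal maps of $G_{\mathbb{K}}^{m}$ as restrictions of projective automorphisms this forces the $\Gamma$-action on $\p\widetilde{M}$ to be topologically conjugate to the boundary action of the locally symmetric space. Such a boundary conjugacy integrates to a conjugacy between $g^{t}$ and $g^{t}_{\mathbb{K}}$ that is $C^{1}$ along the stable and unstable foliations, hence $C^{1}$ on $SM$ by Journ\'e's regularity lemma; after rescaling $d$ by a global constant, a Livsic argument matching the (then proportional) periods of corresponding periodic orbits makes the conjugacy time-preserving. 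The minimal entropy rigidity theorem of Besson, Courtois, and Gallot now converts this $C^{1}$ time-preserving conjugacy between geodesic flows into a homothety between $d$ and $d_{\mathbb{K}}$, as desired.
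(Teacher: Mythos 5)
Your skeleton matches the paper's (invariant conformal structure on $H^{u}$ from hypothesis (1), the charts of hypothesis (2) plus Capogna--Pansu rigidity to upgrade holonomies, a $C^{1}$ $\Gamma$-equivariant boundary identification, then Besson--Courtois--Gallot), but both load-bearing steps are missing. First, the conformal structure on $H^{u}$ cannot be obtained ``exactly as in the proof of Theorem \ref{hyp}.'' That proof runs through Lemma \ref{hypirr}, which requires the flow to be $1$-bunched so that the Anosov splitting is $C^{1}$ and the Kanai connection exists, and it requires a holonomy-invariant subbundle of $E^{u}$ to integrate to a foliation of $W^{u}$ descending to a $\pi_{1}(M)$-invariant foliation of $\p\wt{M}$. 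For metrics $C^{2}$-close to $d_{\mathbb{K}}$ the rates on $H^{u}$ and $V^{u}$ are approximately $e^{t}$ and $e^{2t}$, so the flow is only $\alpha$-bunched for $\alpha<1$: there is no Kanai connection, and an invariant subbundle of $H^{u}$ has no reason to be integrable inside $W^{u}$ (indeed $H^{u}$ itself is generally non-integrable, which is essential in Section \ref{regproof}). The paper replaces this with a genuinely different argument (Lemma \ref{chypirr}): pass to the quotient $Q^{u}$ of $W^{u}$ by $W^{vu}$, build the flat torsion-free connection of Lemma \ref{connect} there, and prove density of $\E$-accessibility classes using the boundary action and north--south dynamics, before running the contraction argument on $Q^{u}$. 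None of this is supplied by citing Theorem \ref{hyp}.

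Second, the step you yourself call ``the main obstacle'' --- converting invariance of the dynamical conformal structure into honest Carnot-Caratheodory dilatation bounds for the chart transitions and center-stable holonomies --- is exactly where the paper's key technical input sits, and you leave it unresolved. Lemma \ref{wcs} shows, under the fiber bunching available for $Dg^{t}|H^{u}$, that the $W^{cs}$-holonomy between unstable leaves maps $C^{1}$ curves tangent to $H^{u}$ to $C^{1}$ curves tangent to $H^{u}$, with derivative along $H^{u}$ equal to the linear holonomy $h^{cs}$. Read in the charts $\Psi_{p}$, this yields Pansu differentiability of the holonomy with conformal Pansu derivative (because $h^{cs}$ preserves $\tau$ and $D\Psi_{p}(H^{u})=\mathcal{T}^{m}_{\mathbb{K}}$), hence $1$-quasiconformality for the associated Carnot-Caratheodory metric; only then do Capogna's and Pansu's theorems force the holonomies to be projective automorphisms, giving the $C^{1}$ Anosov splitting (Lemma \ref{reg}) and the projective atlas on $\p\wt{M}$. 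Without that differentiability statement the holonomies are merely H\"older, and holonomy-invariance of $\tau$ by itself gives neither Pansu differentiability nor uniform dilatation bounds, so the quasiconformal machinery never engages. Your closing step is also shakier than necessary: rather than ``integrating'' a boundary conjugacy and invoking Journ\'e and Livsic, the paper notes that the resulting boundary map is $C^{1}$, $\Gamma$-equivariant, and carries the Lebesgue measure class to the Lebesgue measure class, and then applies Corollary 4.6 of \cite{Ham2} to obtain directly the $C^{1}$ time-preserving (up to scaling) conjugacy fed into the minimal entropy rigidity theorem.
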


It is possible to give an explicit description of the set $\mathcal{U}$ in terms of norm bounds on $Dg^{t}|H^{u}$ and $Dg^{t}|V^{u}$; this is discussed in detail at the beginning of Section \ref{thirdthm}. A natural question which is important for potential applications is whether Assumption (2) of Theorem \ref{weakchyp} follows from Assumption (1). We are able to show in the case of complex hyperbolic space that if one assumes that the bundle $H^{u}$ has sufficiently high regularity along the unstable foliation $W^{u}$ then Assumption (2) follows from Assumption (1).  

\begin{thm}\label{regchyp}
Let $M = \mathbb{H}_{\C}^{m/2}/ \Gamma$ be a cocompact quotient complex hyperbolic space with symmetric metric $d_{\C}$. There is a $C^{2}$ open neighborhood $\mathcal{U}$ of $d_{\C}$ in the space of Riemannian metrics on $M$ such that if $d \in \mathcal{U}$ has geodesic flow $g^{t}$ which satisfies 
\begin{enumerate}
\item  $\la_{+}(Dg^{t}|H^{u},\mu) = \la_{-}(Dg^{t}|H^{u},\mu)$ for some ergodic, fully supported $g^{t}$-invariant measure $\mu$ with local product structure.
\item The restriction of $H^{u}$ to unstable leaves $W^{u}$ is $C^{1}$ with H\"older continuous derivative.  
\end{enumerate}
Then Assumption (2) of Theorem \ref{weakchyp} holds and consequently $d$ is homothetic to $d_{\C}$. 
\end{thm}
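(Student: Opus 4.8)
The goal is to upgrade regularity of $H^u$ along unstable leaves into the existence of the special charts $\Psi_p$ demanded by Assumption (2) of Theorem \ref{weakchyp}. The strategy is as follows. First I would invoke Assumption (1) together with the nonlinear invariance principle of Avila--Viana (as used in the proof of Theorem \ref{hyp}, or more precisely its analogue along $H^u$ discussed in Section \ref{thirdthm}) to conclude that $Dg^t|H^u$ is uniformly quasiconformal, and in fact conformal with respect to a measurable, then (by the usual bootstrapping via the transitivity of $g^t$ and continuity of the relevant structures on compact $SM$) continuous conformal structure on $H^u$. This is the step that requires no new ideas beyond what is already in place for Theorem \ref{weakchyp}; the novelty is entirely in producing the chart.

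The heart of the proof is the construction of $\Psi_p: W^u_r(p) \to G_\C$ intertwining $H^u$ with $\mathcal{T}^m_\C$. Here I would work inside a single unstable leaf $W^u(p)$, which carries the structure of a simply connected nilpotent-like manifold since $d$ is $C^2$-close to $d_\C$: the subbundle $H^u$, being $C^1$ with H\"older derivative along $W^u$ by Assumption (2), together with the complementary $V^u$, gives a flag $H^u \subset TW^u$ on the leaf. I would show that $H^u$ is a bracket-generating (step-two) distribution on $W^u(p)$ whose associated Carnot--Carath\'eodory geometry, equipped with the conformal structure on $H^u$ obtained above, is locally modeled on $(G_\C, \mathcal{T}^m_\C)$. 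The conformality of $Dg^t|H^u$ plus the expanding/dominated structure of $g^t$ on $E^u = H^u \oplus V^u$ forces the osculating nilpotent group of this sub-Riemannian structure at every point to be isomorphic to $G_\C$, and the H\"older regularity of the derivative of $H^u$ is exactly what is needed for the Carnot--Carath\'eodory exponential-type coordinates to be a genuine $C^1$ embedding rather than merely a measurable or bi-H\"older one. At this point the rigidity of quasiconformal mappings of $G_\C$ due to Capogna and Pansu enters: it identifies these coordinate charts, up to composition with projective automorphisms of $G_\C$, and thus shows they patch into a well-defined $C^1$ embedding $\Psi_p$ with $D\Psi_p(H^u_q) = (\mathcal{T}^m_\C)_{\Psi_p(q)}$, which is precisely Assumption (2) of Theorem \ref{weakchyp}.

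Once Assumption (2) of Theorem \ref{weakchyp} is verified, Assumption (1) of Theorem \ref{weakchyp} holds by hypothesis (it is identical to Assumption (1) here), and since the $C^2$ neighborhood $\mathcal{U}$ in the statement can be taken to be contained in the neighborhood provided by Theorem \ref{weakchyp}, that theorem applies verbatim and yields that $d$ is homothetic to $d_\C$.

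I expect the main obstacle to be the regularity bookkeeping in the chart construction: showing that the Carnot--Carath\'eodory coordinates built from a $C^1$-with-H\"older-derivative distribution $H^u$ on $W^u(p)$ are honestly $C^1$ and that $D\Psi_p$ sends $H^u$ exactly onto $\mathcal{T}^m_\C$ (not just approximately, or only almost everywhere). The conformality input controls the "horizontal" directions well, but the interaction with the $V^u$ (center) direction — whose regularity is only controlled through that of $H^u$ via bracket relations — is delicate, and this is precisely why the theorem is restricted to complex hyperbolic space, where $\mathcal{V}^m_\C$ is one-dimensional and the bracket structure of $G_\C$ (the Heisenberg group) is the simplest possible. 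Verifying that the H\"older exponent in Assumption (2) here is good enough to close this argument, and that it is implied on a $C^2$-neighborhood of $d_\C$, will be the technical crux.
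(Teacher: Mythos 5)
Your first step (using Assumption (1), amenable reduction, and the triviality of invariant subbundles of $H^{u}$ to produce a continuous holonomy-invariant conformal structure on $H^{u}$) matches the paper. But the heart of your plan has a genuine gap: you simply assert that $H^{u}$ is a bracket-generating (step-two) distribution on $W^{u}(p)$, saying that ``conformality plus the dominated structure forces the osculating nilpotent group to be $G_{\C}$.'' Nothing in the hypotheses makes this automatic: $H^{u}$ is only a dynamically defined subbundle, and a priori it could be integrable, or its associated $2$-form could be degenerate on a proper subbundle. Ruling this out is exactly the content of Lemma \ref{nondeg} in the paper, and it is a genuinely dynamical argument: one builds a $C^{1}$ $1$-form $\alpha$ with $\ker\alpha\cap E^{u}=H^{u}$, uses ergodicity of $\mu$ to see the kernel dimension of $d\alpha|H^{u}$ is a.e.\ constant, excludes intermediate degeneracy by the irreducibility result (Lemma \ref{chypirr}), excludes $d\alpha|H^{u}\equiv 0$ by the density of $H^{u}$-accessibility classes in unstable leaves (Corollary \ref{acc}), and then upgrades a.e.\ nondegeneracy to everywhere nondegeneracy via measurable Livsic rigidity for the ratio of the cocycles $\varphi^{t}$ and $\psi^{t}$. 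Your proposal has no counterpart to any of this, and without it the ``Carnot--Carath\'eodory geometry modeled on $(G_{\C},\mathcal{T}^{m}_{\C})$'' does not even exist.

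The second gap is in the chart construction itself. Even granting nondegeneracy, Carnot--Carath\'eodory exponential-type coordinates do not by themselves produce a $C^{1}$ chart that sends $H^{u}$ \emph{exactly} onto the left-invariant distribution $\mathcal{T}^{m}_{\C}$ (only to first order at the base point), and quasiconformal rigidity \`a la Capogna--Pansu cannot help here: it concerns maps between open subsets of $G_{\C}$ and is used in the paper only later, in Lemma \ref{reg}, to show that center-stable holonomies expressed in the charts are projective automorphisms. What actually produces the charts in the paper (Lemma \ref{chart}) is a contact Darboux-type normalization: one shows the Reeb field of $\xi_{p}\alpha$ is parallel to $V^{u}$ (an angle/domination argument under backward iteration), so that $d(\xi_{p}\alpha)$ descends to a $C^{1}$ symplectic form on the quotient $Q^{u}(p)$, applies Darboux there, and then lifts and corrects by an explicit primitive to achieve $\Psi_{p}^{*}(\nu)=\xi_{p}\alpha$, which gives $D\Psi_{p}(H^{u})=\mathcal{T}^{m}_{\C}$ on the nose. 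The regularity hypothesis (2) enters precisely to make the unstable holonomy, hence $\xi_{p}$ and $d(\xi_{p}\alpha)$, genuinely $C^{1}$; your plan acknowledges the regularity bookkeeping as the crux but offers no mechanism that flattens the distribution exactly, which is the step your proposal would need to supply.
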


The high regularity assumption on $H^{u}$ is likely too strong; on the open neighborhood $\mathcal{U}$ of $d_{\mathbb{C}}$ given in Theorem \ref{regchyp}, $H^{u}$ will typically be no better than H\"older continuous along $W^{u}$ with exponent $1/2-\e$ for some $\e > 0$. We expect that Theorem \ref{regchyp} also holds for perturbations of the geodesic flow quaternionic hyperbolic and octonionic hyperbolic manifolds. Lastly we make the important remark that verifying Assumption (2) of Theorem \ref{weakchyp} does not require proving higher regularity of $H^{u}$ as assumed in Theorem \ref{regchyp}. 

The proofs of these theorems make use of two powerful tools recently developed in smooth dynamics. The first is the method of approximation of Lyapunov exponents of invariant measures over a system by Lyapunov exponents of periodic points developed by Kalinin in his recent solution of the Livsic problem for $GL(n,\R)$-cocycles over hyperbolic systems\cite{Kal}. We use this to transfer information about the periodic exponents of $g^{t}$ to exponents of any invariant measure for $g^{t}$. 

The second is a far-reaching nonlinear generalization of Furstenberg's theorem on nonvanishing Lyapunov exponents for random $GL(n,\R)$-cocycles which characterizes when the Lyapunov exponents of a cocycle over a partially hyperbolic system vanish under suitable hypotheses. Inspired by an alternative proof by Ledrappier \cite{Led} of Furstenberg's theorem, Avila and Viana proved a nonlinear generalization \cite{AV}, and then later with Santamaria showed how this nonlinear generalization could be applied to cocycles over partially hyperbolic systems \cite{ASV}. We apply a further distillation of this tool by Kalinin and Sadovskaya in \cite{KS} which is adapted to the study of cocycles which are close to being conformal. They have applied this to the study of linear cocycles with uniformly quasiconformal behavior and asymptotically conformal Anosov diffeomorphisms \cite{KS2},\cite{KS3}.

In Section \ref{amenred} we adapt the main results of Kalinin and Sadovskaya \cite{KS} regarding conformal structures for linear cocycles to our setting. We also review the concepts of fiber bunching and stable and unstable holonomies from partially hyperbolic dynamics. In Section \ref{firstthm} we prove Theorem \ref{perhyp} and Theorem \ref{hyp}. In Section \ref{horsub} we analyze the case of a dominated splitting $E^{u} = H^{u} \oplus V^{u}$ and develop the dynamical tools needed for the proofs of the remaining results. In Section \ref{thirdthm} we use these tools to prove Theorem \ref{weakchyp}. In Section \ref{regproof} we prove Theorem \ref{regchyp}. 

\textbf{Acknowledgments}: We thank Amie Wilkinson for numerous discussions which improved the paper greatly, as well as continued guidance and support. We thank Boris Kalinin for bringing to our attention the gap in \cite{Yue} discussed in Remark \ref{error}. We thank Ursula Hamenst\"adt for providing the closing argument in the last paragraph of the proof of Theorem \ref{weakchyp}. We are grateful to Alex Eskin for pointing out an error in the proof of Lemma \ref{amen} in an earlier draft of the paper. 
\end{section}

\begin{section}{Background on Linear Cocyles over Anosov Flows}\label{amenred}
For this section we take $g^{t}$ to be an Anosov flow on a Riemannian manifold $X$. We will specialize when necessary to the case that $X = SM$ is the unit tangent bundle of a Riemannian manifold and $g^{t}$ is the geodesic flow. 

A $d$-dimensional vector bundle $\pi: \E \rightarrow X$ is $\beta$-H\"older continuous if there is an open cover of $X$ by open sets $U_{i}$ which admit linear trivializations $\varphi_{i}: \pi^{-1}(U_{i}) \rightarrow U_{i} \times \R^{d}$ such that the transition maps $\varphi_{i} \circ \varphi_{j}^{-1}$ are $\beta$-H\"older continuous with respect to the Riemannian metric on $X$ and the Euclidean metric on $\R^{d}$.  A cocycle $A^{t}: \E \rightarrow \E$ over $g^{t}$ is $\beta$-H\"older if $\E$ is a $\beta$-H\"older vector bundle and $A^{t}$ is $\beta$-H\"older in any $\beta$-H\"older family of trivializations. A standard example of a linear cocycle defined on a H\"older continuous vector bundle is the restriction of $Dg^{t}$ to the unstable bundle $E^{u}$. 

\begin{subsection}{Holonomies for linear cocycles}
We begin with a definition, 

\begin{defn}A $\beta$-H\"older continuous cocycle $A^{t}$ is $\alpha$-\emph{fiber bunched} if $\alpha \leq \beta$ and there is some $T > 0$ such that 
\[
\|A^{t}_{p}\| \cdot \|A^{-t}_{p}\| \cdot \|Dg^{t}_{p}|E^{s}\|^{\alpha} < 1, \; p \in SM, \; t \geq T
\]
\[
\|A^{t}_{p}\| \cdot \|A^{-t}_{p}\| \cdot \|Dg^{-t}_{p}|E^{u}\|^{\alpha} < 1, \; p \in SM, \; t \geq T
\]
\end{defn}

Fiber bunching guarantees the existence of $A^{t}$-equivariant identifications of the fibers of $\E$ along stable and unstable manifolds of $g^{t}$ known as \emph{holonomies}. These identifications are essential for everything that follows in this paper. 

We define here unstable holonomies. Stable holonomies are defined similarly. Stable holonomies could also be defined as the unstable holonomies of the inverse cocycle $A^{-t}$ over $g^{-t}$. The definition below is from \cite{KS}, adapted to the setting of flows. 
\begin{defn}\label{defuhol}
An \emph{unstable holonomy} for a linear cocycle $\mathcal{A}: \E \times \R \rightarrow \E$ over $g^{t}$ is a continuous map $h^{u}:(x,y) \rightarrow h^{u}_{xy}$, where $x \in X$, $y \in W^{u}_{r}(x)$, such that
\begin{enumerate}
\item $h^{u}_{xy}$ is a linear map from $\E_{x}$ to $\E_{y}$;
\item $h^{u}_{xx} = Id$ and $h^{u}_{yz} \circ h^{u}_{xy} = h^{u}_{xz}$;
\item $h^{u}_{xy} = (A^{t}_{y})^{-1} \circ h^{u}_{g^{t}x g^{t}y} \circ A^{t}_{x}$ for every $t \in \R$. 
\end{enumerate}
\end{defn}
The next proposition gives a sufficient condition for the existence of holonomies. For a $\beta$-H\"older vector bundle $\E$, it is always possible to find a $\beta$-H\"older continuous system of linear identifications $I_{xy}: \E_{x} \rightarrow \E_{y}$ with $I_{xx} = Id_{\E_{x}}$ and $d(x,y) \leq r$ for some constant $r > 0$ \cite{KS}. 
\begin{prop}\label{existuhol}
Suppose that $\mathcal{A}$ is $\beta$-H\"older and fiber bunched. Then there is an unstable holonomy $h^{u}$ for $\mathcal{A}$ which satisfies 
\begin{equation}\label{uhol}
\|h^{u}_{xy}-I_{xy}\| \leq Cd(x,y)^{\beta}
\end{equation}
for $x \in SM$, $y \in W^{u}_{r}(x)$, and some $C > 0$. Furthermore, the unstable holonomy satisfying \eqref{uhol} for some $C > 0$ is unique. 
\end{prop}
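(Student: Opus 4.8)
The plan is to construct $h^u_{xy}$ as a limit of "transported" copies of the identifications $I$ conjugated back and forth along the flow, exactly as in the discrete-time construction of Kalinin--Sadovskaya but adapted to flows. Concretely, for $x \in SM$ and $y \in W^u_r(x)$ I would set
\[
h^u_{xy} := \lim_{t \to \infty} (A^t_y)^{-1} \circ I_{g^t x,\, g^t y} \circ A^t_x,
\]
and the first task is to show this limit exists. Writing the finite-time approximants as $h^{u,t}_{xy}$, I would estimate the difference $h^{u,t}_{xy} - h^{u,s}_{xy}$ for $s < t$ by a telescoping argument: the $n$-th term in the telescope is controlled by $\|(A^s_y)^{-1}\|\cdot\|A^s_x\|$ times the $\beta$-H\"older norm of $I$ evaluated at the pair $(g^s x, g^s y)$, and since $y \in W^u(x)$ the distance $d(g^{-\tau} x, g^{-\tau} y)$ contracts like $\|Dg^{-\tau}|E^u\|$ as $\tau \to \infty$; after reparametrising (replacing $x,y$ by $g^sx, g^sy$ and running the flow backward) the fiber-bunching inequality $\|A^t_p\|\cdot\|A^{-t}_p\|\cdot\|Dg^{-t}_p|E^u\|^\alpha < 1$ for $t \geq T$ gives a geometric bound on the telescope, so the sequence is Cauchy and converges uniformly on the set $\{(x,y): y \in W^u_r(x)\}$. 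The same estimate, summed from $t=0$, yields \eqref{uhol} with the stated constant $C$.

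Once the limit exists, properties (1)--(3) of Definition \ref{defuhol} follow formally. Linearity of $h^u_{xy}$ is inherited from linearity of each approximant. The normalization $h^u_{xx} = \mathrm{Id}$ holds because $I_{xx} = \mathrm{Id}$. For the cocycle equivariance (3), one observes that $h^{u,t}_{xy}$ conjugated by $A^\tau$ is exactly $h^{u,t+\tau}_{xy}$ up to reindexing, so passing to the limit gives $h^u_{xy} = (A^\tau_y)^{-1}\circ h^u_{g^\tau x, g^\tau y}\circ A^\tau_x$. The chain rule $h^u_{yz}\circ h^u_{xy} = h^u_{xz}$ (when all three points lie in a common unstable leaf within the relevant radius) follows from the corresponding identity $I_{yz}I_{xy} = I_{xz} + O(d^\beta)$ for the approximants, combined with the uniform convergence and the fact that the error terms are killed in the limit by the same bunching estimate; continuity of $(x,y)\mapsto h^u_{xy}$ is a consequence of the locally uniform convergence of continuous functions.

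For uniqueness, suppose $\bar h^u$ is another unstable holonomy satisfying \eqref{uhol} with some constant $\bar C$. Then $\Phi_{xy} := \bar h^u_{xy}\circ (h^u_{xy})^{-1} \colon \E_x \to \E_x$ is a continuous family of linear automorphisms, equals $\mathrm{Id}$ on the diagonal, satisfies $\|\Phi_{xy} - \mathrm{Id}\| \leq C' d(x,y)^\beta$ by \eqref{uhol} and the triangle inequality, and is invariant under conjugation by the cocycle: $\Phi_{xy} = (A^t_x)^{-1}\circ \Phi_{g^t x, g^t y}\circ A^t_x$ after using property (3) for both holonomies (note both land back in $\E_x$ since we conjugate a self-map). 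Applying this with $t \to \infty$ replaced by running along the unstable leaf contracting in backward time, $d(g^{-t}x, g^{-t}y) \to 0$, so $\Phi_{g^{-t}x, g^{-t}y} \to \mathrm{Id}$ while the conjugating norms $\|A^{-t}_x\|\cdot\|(A^{-t}_x)^{-1}\|$ grow at most like $\|Dg^{-t}|E^u\|^{-\alpha}$ by bunching, which is slower than $d(g^{-t}x,g^{-t}y)^{-\beta} \geq \|Dg^{-t}|E^u\|^{-\beta}$ decays — wait, I must instead use the forward direction: conjugating by $A^t$ with $t\to\infty$ moves the pair apart along $W^u$, so that is the wrong sign. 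The correct argument is to note that the bundle $\E$ over $W^u(x)$ together with the flow gives, for each fixed $y$, a sequence $g^{-t}x, g^{-t}y$ with $d \to 0$; since $h^u$ and $\bar h^u$ both satisfy (3), $\Phi_{xy}$ is determined by $\Phi_{g^{-t}x,g^{-t}y}$ conjugated by $A^{-t}$, and $\|\Phi_{g^{-t}x,g^{-t}y} - \mathrm{Id}\| \leq C' d(g^{-t}x,g^{-t}y)^\beta$ together with the bunching bound on $\|A^{-t}_{g^{-t}x}\|\cdot\|A^{t}_{x}\|$ forces $\Phi_{xy} = \mathrm{Id}$. I expect this uniqueness step, specifically getting the direction of the contraction and the competing exponential rates lined up correctly, to be the main technical subtlety; the existence half is a routine adaptation of \cite{KS}.
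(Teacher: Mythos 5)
Your overall scheme (define $h^{u}$ as a limit of the identifications $I$ conjugated along the orbit, control the telescoping sum by the fiber bunching inequality, then get uniqueness by comparing two holonomies along an orbit where the base points converge) is exactly the Kalinin--Sadovskaya argument that the paper simply cites and transfers to flows, so the strategy is the right one. However, as written the existence step fails at the very first formula: you define $h^{u}_{xy} := \lim_{t\to\infty}(A^{t}_{y})^{-1}\circ I_{g^{t}x,\,g^{t}y}\circ A^{t}_{x}$ for $y\in W^{u}_{r}(x)$, which is the construction of the \emph{stable} holonomy. For $y$ on the unstable leaf of $x$, the forward images $g^{t}x$ and $g^{t}y$ separate exponentially, so $I_{g^{t}x,\,g^{t}y}$ is eventually not even defined (the identifications exist only for $d(x,y)\leq r$), and the telescoping errors grow rather than decay; no choice of bunching estimate rescues this limit. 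The correct definition runs the flow backward, $h^{u}_{xy} = \lim_{t\to\infty} A^{t}_{g^{-t}y}\circ I_{g^{-t}x,\,g^{-t}y}\circ A^{-t}_{x}$, equivalently one constructs $h^{u}$ as the stable holonomy of the inverse cocycle $A^{-t}$ over $g^{-t}$ (as the paper remarks just before the definition); then $d(g^{-t}x,g^{-t}y)\lesssim \|Dg^{-t}|E^{u}\|\,d(x,y)$ and the second fiber bunching inequality, in its exponentiated form $\|A^{t}_{p}\|\cdot\|A^{-t}_{p}\|\cdot\|Dg^{-t}_{p}|E^{u}\|^{\beta}\leq \theta^{t}$ for $t$ large and some $\theta<1$, makes the telescope geometric and gives \eqref{uhol}. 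Your own text betrays this confusion: the error estimate in the existence paragraph already talks about ``running the flow backward,'' and in the uniqueness paragraph you explicitly notice mid-argument that forward conjugation moves the pair apart and switch directions, but you never go back and repair the defining limit, on which the equivariance property (3), the estimate \eqref{uhol}, and the chain rule all depend.

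The uniqueness half, after your self-correction, is essentially right but should be cleaned up: with $\Phi_{xy}:=\bar h^{u}_{xy}\circ(h^{u}_{xy})^{-1}$ (a map of $\E_{y}$, not $\E_{x}$), property (3) applied with time $-t$ gives $\Phi_{xy} = A^{t}_{g^{-t}y}\circ\Phi_{g^{-t}x,\,g^{-t}y}\circ (A^{t}_{g^{-t}y})^{-1}$, and then $\|\Phi_{xy}-\mathrm{Id}\|\leq \|A^{t}_{g^{-t}y}\|\cdot\|A^{-t}_{y}\|\cdot C'\,d(g^{-t}x,g^{-t}y)^{\beta}\leq C''\theta^{t}d(x,y)^{\beta}\to 0$, using the same exponentiated bunching bound; note that the raw inequality ``$<1$ for $t\geq T$'' is not enough by itself when the H\"older exponent of $I$ equals the bunching exponent, so you do need the uniform exponential decay obtained by compactness and splitting $t$ into blocks of length $T$. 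With the time direction fixed throughout, your proposal coincides with the proof the paper invokes from \cite{KS}.
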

The proof of Proposition \ref{existuhol} for fiber bunched cocycles over partially hyperbolic maps is given in \cite{KS}; an identical proof works for fiber bunched cocycles over Anosov flows instead. 

Let $r > 0$ be small enough that for every $x \in X$, all of the foliations $W^{*}_{r}$ are trivial on the ball of radius $r$. Given an unstable holonomy $h^{u}$ for a linear cocycle $\mathcal{A}$ over $g^{t}$, we can locally extend it to a \emph{center unstable holonomy} $h^{cu}$ by defining for $y \in W^{cu}_{r}(x)$,
\[
h^{cu}_{xy} = A^{\tau}_{g^{-\tau}_{y}} \circ h^{u}_{x g^{-\tau}y} = h^{u}_{g^{\tau}x y} \circ A^{\tau}_{x}
\]
where $\tau = \tau(x,y)$ is the unique real number such that $g^{-\tau}y \in W^{u}_{r}(x)$. It is easily checked using the properties in Definition \ref{defuhol} that $h^{cu}$ satisfies properties analogous to those of $h^{u}$ on a ball of radius $r$. In particular, for $y \in W^{cu}_{r/2}(x)$ and $z \in W^{cu}_{r/2}(y)$, we have that $h^{cu}_{yz} \circ h^{cu}_{xy} = h^{cu}_{xz}$. We also see that if $y \in W^{cu}_{r}(x)$ and $d(g^{t}y,g^{t}x) \leq r$, then  $h^{cu}_{xy} = (A^{t}_{y})^{-1} \circ h^{cu}_{g^{t}x g^{t}y} \circ A^{t}_{x}$. 

We specialize to the case $X = SM$ and $g^{t}$ a geodesic flow for the rest of this subsection. It is not immediately clear from the formula defining the center unstable holonomies that these extend to be globally defined on a center unstable leaf $W^{cu}(x)$; to prove this we use some of the special structure of the geodesic flow. Since $m \geq 3$, the universal cover of $SM$ is the unit tangent tangent bundle $S\widetilde{M}$ of the universal cover $\widetilde{M}$ of $M$, and $\pi_{1}(SM)$ is canonically isomorphic to $\pi_{1}(M)$ by the projection $SM \rightarrow M$. The foliations $W^{*}$ lift to foliations $\widetilde{W}^{*}$ of $S\widetilde{M}$ which have \emph{global product structure}: for each $x,y,z \in S\widetilde{M}$, the leaves $\widetilde{W}^{c}(x), \widetilde{W}^{u}(y),\widetilde{W}^{s}(z)$ intersect in exactly one point.

Let $\widetilde{\E}$ be the lift of the vector bundle $\E$ to a vector bundle over $S \widetilde{M}$. For two points $x \in S\widetilde{M}$, $y \in \widetilde{W}^{cu}(x)$, the center unstable holonomy map $h^{cu}_{xy}: \widetilde{\E}_{x} \rightarrow \widetilde{\E}_{y}$ is defined by the formula 
\[
h^{cu}_{xy} = \widetilde{A}^{\tau}_{\widetilde{g}^{-\tau}_{y}} \circ h^{u}_{x \widetilde{g}^{-\tau}y} = h^{u}_{\widetilde{g}^{\tau}x y} \circ \widetilde{A}^{\tau}_{x}
\]
where $\tau = \tau(x,y)$ is the unique time $\tau \in \R$ such that $\widetilde{g}^{-\tau}y \in \widetilde{W}^{u}(x)$. It's easy to check that this locally agrees with the previously defined center unstable holonomy, and gives a global extension of $h^{cu}$ satisfying the analogous properties in Definition \ref{defuhol}. 

Let $\p \widetilde{M}$ be the visual boundary of $\widetilde{M}$. This global product structure corresponds to the \emph{Hopf parametrization},
\[
S\widetilde{M} = \R \times \p \widetilde{M} \times \p \widetilde{M} \backslash \Delta(\p \widetilde{M} \times \p \widetilde{M})
\]
given as follows: Fix a basepoint $x \in \widetilde{M}$. Let $v \in S\widetilde{M}$. $v$ is tangent to a geodesic $\gamma_{v}$ which has endpoints $v_{+},v_{-} \in \p \widetilde{M}$, where $v_{+}$ corresponds to the forward endpoint of $\gamma_{v}$, and $v_{-}$ the backward endpoint. Let $x_{v}$ be the orthogonal projection of $x$ onto $\gamma_{v}$, and let $s$ be the distance from $x_{v}$ to $P(v)$, where $P: S\widetilde{M} \rightarrow \widetilde{M}$ is projection. Then the identification is given by $v \rightarrow (s,v_{+},v_{-})$. In this identification, the action of the geodesic flow is given by translation in the $\R$-coordinate. This parametrization of $S\widetilde{M}$ will be important in Sections \ref{firstthm} and \ref{horsub}. 

\end{subsection}

\begin{subsection}{Continuous Amenable Reduction}\label{contamen} 
We now adapt the main results of \cite{KS} to our setting. Let $\E$ be a $d$-dimensional H\"older continuous vector bundle over a Riemannian manifold $X$ with an Anosov flow $g^{t}$.  We let $\mu$ be a fully supported ergodic $g^{t}$-invariant measure with local product structure. For the results in this section we will assume that the stable and unstable distributions $E^{s}$ and $E^{u}$ for $g^{t}$ are not jointly integrable; this is true for the geodesic flow because the geodesic flow is a contact Anosov flow.  

Two Riemannian metrics $\tau$ and $\sigma$ on $\E$ are \emph{conformally equivalent} if there is a function $a: X \rightarrow \R$ such that $\tau_{p} = a(p)\sigma_{p}$. A \emph{conformal structure} on $\E$ is a conformal equivalence class of Riemannian metrics on $\E$. $A^{t}$ transforms a conformal structure by pulling back the associated Riemannian metric. A conformal structure represented by a Riemannian metric $\tau$ is \emph{invariant} under $\mathcal{A}$ if for each $t \in \R$ there is a map $\psi^{t}: X \rightarrow \R$ satisfying
\[
(A^{t})^{*}\tau = \psi^{t} \tau
\]
In this case we say that $\psi^{t}$ is the multiplicative cocycle associated to the invariant conformal structure $\tau$. $\psi^{t}$ satisfies the cocycle property
\[
\psi^{t+s}(p) = \psi^{t}(p)\psi^{s}(g^{t}(p))
\]
for any $t,s \in \R$. 

Two multiplicative cocycles $\psi^{t}$ and $\varphi^{t}$ are \emph{cohomologous} if there is a map $\zeta: X \rightarrow \R$ such that 
\[
\frac{\psi^{t}}{\varphi^{t}} = \frac{\zeta \circ g^{t}}{\zeta}
\]
for every $t \in \R$. 

If a cocycle $\mathcal{A}$ over $X$ admits stable and unstable holonomies, we say that a subbundle $\mathcal{V} \subset \E$ is \emph{holonomy invariant} if for $y \in W^{*}(x)$ we have $h^{*}_{xy}(\mathcal{V}_{x}) = \mathcal{V}_{y}$ for $* = u$ or $s$. Similarly we say that a conformal structure is holonomy invariant if it is invariant under pulling back by stable and unstable holonomies. 
\begin{lem}\label{contsub}
Let $\mathcal{A}$ be a fiber bunched cocycle over an Anosov flow $g^{t}$ for which $E^{u}$ and $E^{s}$ are not jointly integrable. Suppose that
\[
\la_{+}(\mathcal{A},\mu) = \la_{-}(\mathcal{A},\mu)
\] 
Then any measurable $\mathcal{A}$-invariant subbundle $V \subseteq \E$ coincides $\mu$-a.e. with a $\mathcal{A}$-invariant holonomy invariant continuous subbundle. Under the same hypotheses, any $\mathcal{A}$-invariant measurable conformal structure $\tau$ on $\E$ coincides $\mu$-a.e. with a $\mathcal{A}$-invariant holonomy invariant continuous conformal structure.
\end{lem}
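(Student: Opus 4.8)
The plan is to realize a measurable $\mathcal{A}$-invariant subbundle $V \subseteq \E$ of rank $k$ as a measurable section $x \mapsto V_{x}$ of the Grassmann bundle $\mathrm{Gr}_{k}(\E) \to X$, push $\mu$ forward to an invariant measure for the induced cocycle on $\mathrm{Gr}_{k}(\E)$, and apply the invariance principle of Avila--Viana in the form of \cite{KS} (built on \cite{AV},\cite{ASV}), whose proof transfers from partially hyperbolic maps to Anosov flows, to conclude that this measure --- hence the section --- is invariant under the stable and unstable holonomies $\mu$-a.e.; the non-joint integrability of $E^{u}$ and $E^{s}$ together with full support of $\mu$ then forces the section to agree a.e.\ with a genuinely continuous holonomy invariant section, which is automatically $\mathcal{A}$-invariant.

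First, the equality $\la_{+}(\mathcal{A},\mu) = \la_{-}(\mathcal{A},\mu)$ forces every Oseledets exponent of $\mathcal{A}$ with respect to $\mu$ to equal the common value, since all of them lie in $[\la_{-}(\mathcal{A},\mu),\la_{+}(\mathcal{A},\mu)]$. Replacing $A^{t}$ by $|\det A^{t}|^{-1/d} A^{t}$ --- which has the same action on $\mathrm{Gr}_{k}(\E)$ and the same induced holonomies, and which remains fiber bunched after shrinking the bunching exponent --- we may assume all Lyapunov exponents of $\mathcal{A}$ vanish; consequently the cocycle induced on $\mathrm{Gr}_{k}(\E)$ has vanishing fiberwise exponents along $\mu$. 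The section $\sigma(x) = V_{x}$ is equivariant, $\sigma \circ g^{t} = F_{A^{t}} \circ \sigma$, so $\hat\mu := \sigma_{*}\mu$ is an $F_{\mathcal{A}}$-invariant probability measure on $\mathrm{Gr}_{k}(\E)$ projecting to $\mu$ whose disintegration over $\mu$-a.e.\ $x$ is the Dirac mass at $V_{x}$. Because $\mu$ has local product structure, $\mathcal{A}$ is fiber bunched (so that $\mathrm{Gr}_{k}(\E)$ carries stable and unstable holonomies for the induced cocycle, by Proposition \ref{existuhol}), and the fiberwise exponents vanish, the invariance principle applies and shows that the conditionals of $\hat\mu$ along unstable (resp.\ stable) plaques are invariant under the induced unstable (resp.\ stable) holonomy. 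Since these conditionals are Dirac masses, this says precisely that $h^{u}_{xy}(V_{x}) = V_{y}$ for $\mu$-a.e.\ $x$ and a.e.\ $y \in W^{u}_{r}(x)$, and likewise for $h^{s}$.

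To upgrade to continuity, fix a full-measure set $X_{0} \subseteq X$ on which the a.e.\ holonomy invariance above holds in both directions; using local product structure of $\mu$ one arranges that $X_{0}$ meets every local stable and unstable plaque through $\mu$-a.e.\ point in a full-measure subset. For $x \in X_{0}$ and $y \in W^{u}_{r}(x)$ set $\bar V_{y} := h^{u}_{xy}(V_{x})$; by a.e.\ invariance this is independent of $x \in X_{0}$ and, by continuity of $h^{u}$, depends continuously on $y$ along unstable leaves, and similarly along stable leaves and along the flow direction (using the center holonomy $h^{cu}$). Because $E^{u}$ and $E^{s}$ are not jointly integrable --- and $g^{t}$ being a contact Anosov flow, $su$-paths together with the flow fill out a neighborhood of each point --- composing these holonomies defines $\bar V$ continuously near each point of $X_{0}$, the local definitions agree on overlaps, and $\bar V$ extends to a continuous subbundle of $\E$ that is holonomy invariant by construction and agrees with $V$ on $X_{0}$. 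Its $\mathcal{A}$-invariance follows from $\mathcal{A}$-invariance of $V$ on $X_{0}$, continuity of $\mathcal{A}$ and of $\bar V$, and density of $X_{0}$ (full support of $\mu$). The statement for a measurable $\mathcal{A}$-invariant conformal structure is identical, replacing $\mathrm{Gr}_{k}(\E)$ by the bundle whose fiber over $x$ is the space of conformal structures on $\E_{x}$, a homogeneous space under $GL(\E_{x})$; equality of all Lyapunov exponents of $\mathcal{A}$ again makes the induced fiberwise exponents vanish, so the invariance principle and the continuity upgrade apply verbatim.

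The main obstacle is the middle step: verifying that the hypotheses of the invariance principle hold in this flow setting --- in particular that equality of the extremal exponents genuinely yields vanishing fiberwise exponents on $\mathrm{Gr}_{k}(\E)$ (resp.\ the conformal-structure bundle), and that the measure-theoretic machinery of \cite{KS} carries over to Anosov flows with a measure of local product structure --- and then the passage from a.e.\ $su$-invariance to an honest continuous section, where non-joint integrability of $E^{u}$ and $E^{s}$ and full support of $\mu$ are essential. The remaining steps are bookkeeping.
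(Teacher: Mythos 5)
Your outline is, in substance, a reconstruction at the level of the flow of the proof of the results that the paper simply quotes, so it is correct in outline but organized quite differently. The paper's proof is a two-step reduction: it regards $A^{1}$ as a fiber bunched cocycle over the partially hyperbolic diffeomorphism $g^{1}$, invokes \cite{BPW} to deduce accessibility of $g^{1}$ from the non-joint integrability of $E^{u}$ and $E^{s}$, and then cites Theorems 3.3 and 3.1 of \cite{KS} verbatim (these are precisely the statements that a measurable invariant subbundle, resp.\ conformal structure, for a fiber bunched cocycle with equal extremal exponents over an accessible partially hyperbolic system coincides $\mu$-a.e.\ with a continuous holonomy invariant one, when $\mu$ has full support and local product structure). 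You instead re-derive those theorems: a section of the Grassmannian (resp.\ conformal-structure) bundle, vanishing fibered exponents, the Avila--Viana/Avila--Santamaria--Viana invariance principle giving a.e.\ $s$- and $u$-holonomy invariance of the Dirac disintegration, and the accessibility-plus-local-product-structure upgrade to continuity; this is essentially how \cite{KS} prove their results, so the route is viable, but the ``main obstacle'' you flag --- carrying the \cite{KS} machinery over to flows --- is exactly what the paper's passage to the time-1 map avoids, since over $g^{1}$ the cited theorems apply as stated. Two smaller corrections: the accessibility input should not be attributed to the contact property (the lemma does not assume $g^{t}$ is contact); the statement to use is that non-joint integrability of $E^{u}$ and $E^{s}$ makes $g^{1}$ accessible, which is the content of \cite{BPW}. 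Also the determinant normalization is unnecessary (though harmless): the fibered exponents of the induced Grassmannian cocycle are differences of exponents of $\mathcal{A}$, hence already vanish when $\la_{+}(\mathcal{A},\mu)=\la_{-}(\mathcal{A},\mu)$, and fiber bunching is unaffected by scalar renormalization since $\|A^{t}\|\cdot\|A^{-t}\|$ is scale invariant.
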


\begin{proof}
The cocycle generated by $A^{1}$ is a fiber bunched cocycle over the partially hyperbolic diffeomorphism $g^{1}$. Since $E^{u}$ and $E^{s}$ are not jointly integrable, $g^{1}$ is accessible as a partially hyperbolic diffeomorphism \cite{BPW}. In the first case $V$ is a measurable invariant subbundle for $A^{1}$; in the second case, $\tau$ is an invariant measurable conformal structure for $A^{1}$. Theorem 3.3 and Theorem 3.1 respectively from \cite{KS} then apply to give the desired result. 
\end{proof}

\begin{lem}\label{amen}
Let $\mathcal{A}$ be a fiber bunched cocycle over an Anosov flow $g^{t}$ such that $E^{u}$ and $E^{s}$ are not jointly integrable. Suppose that
\[
\la_{+}(\mathcal{A},\mu) = \la_{-}(\mathcal{A},\mu).
\] 
Then there is a finite cover $\mathcal{X}$ of $X$ and a flag
\[
0 \subsetneq \E^{1} \subsetneq \E^{2} \subsetneq \dots \subsetneq \E^{k} = \widetilde{\E}
\]
of continuous holonomy-invariant subbundles $\E^{i}$ which are invariant under the action of the lifted cocycle $\widetilde{\mathcal{A}}$ on the lifted bundle $\widetilde{E}$ over $\mathcal{X}$. Furthermore the induced action of the cocycle $\widetilde{\mathcal{A}}_{i}$ on $\E^{i}/\E^{i-1}$ preserves a continuous holonomy invariant conformal structure. 
\end{lem}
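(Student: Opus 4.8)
The plan is to derive the flag and the conformal structures on the successive quotients from the "amenable reduction" for linear cocycles, exactly as in \cite{KS}, with Lemma \ref{contsub} providing the bridge from the measurable to the continuous category. First I would pass to the cocycle $A^1$ generated by the time-one map $g^1$, which (because $E^u$ and $E^s$ for the geodesic flow are not jointly integrable, the flow being contact) is a fiber bunched cocycle over an accessible partially hyperbolic diffeomorphism; this is the same reduction used in the proof of Lemma \ref{contsub}. The algebraic input is the following standard fact about linear cocycles: given the measurable amenable reduction of $A^1$ (the Zariski closure of the cocycle, after passing to a finite ergodic extension, has a cocompact amenable radical, hence preserves a measurable flag whose quotients carry invariant measurable conformal structures), one obtains a finite cover $\mathcal X \to X$ on which the lifted cocycle $\widetilde{\mathcal A}$ preserves a measurable flag $0 \subsetneq \E^1 \subsetneq \dots \subsetneq \E^k = \widetilde \E$ with each induced quotient cocycle $\widetilde{\mathcal A}_i$ on $\E^i/\E^{i-1}$ preserving a measurable conformal structure. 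This is where the hypothesis $\la_+(\mathcal A,\mu) = \la_-(\mathcal A,\mu)$ enters: equality of the extremal Lyapunov exponents forces the cocycle to be, in the terminology of \cite{KS}, "uniformly quasiconformal in the measurable sense," so that the semisimple part of its algebraic hull acts conformally on each graded piece; this is precisely Theorem 3.1 / 3.3 of \cite{KS} applied block by block.

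Next I would upgrade each object in the measurable flag to a continuous, holonomy-invariant one. The equality of extremal exponents passes to every subquotient $\E^i/\E^{i-1}$ (the extremal exponents of a graded piece are pinched between those of the sub and the quotient, all of which equal the common value $\la_\pm(\mathcal A,\mu)/(\dim)$ up to the appropriate normalization), and fiber bunching is inherited by subbundles and quotient bundles since the relevant norm estimates only weaken under restriction and passage to quotients. Therefore Lemma \ref{contsub} applies to each $\E^i$ as a measurable $\widetilde{\mathcal A}$-invariant subbundle — identifying it a.e.\ with a continuous holonomy-invariant $\widetilde{\mathcal A}$-invariant subbundle — and then again to the measurable conformal structure on each quotient $\E^i/\E^{i-1}$, identifying it a.e.\ with a continuous holonomy-invariant $\widetilde{\mathcal A}_i$-invariant conformal structure. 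Since $\mu$ has full support, the continuous representatives produced by Lemma \ref{contsub} are uniquely determined, so the continuous flag is genuinely a flag ($\E^{i-1} \subset \E^i$ as continuous bundles, not merely a.e.) and the continuous conformal structures are genuinely invariant under $\widetilde{\mathcal A}_i$ and under the induced stable/unstable holonomies on the quotient bundles, which exist by the fiber bunching just noted.

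The main obstacle, and the step I would spend the most care on, is the existence of the \emph{measurable} amenable reduction with conformal quotients — i.e., showing that the condition $\la_+ = \la_-$ is exactly what is needed to run the argument of \cite{KS} fiberwise through the graded pieces. One must check that after reducing to the semisimple part of the algebraic hull and choosing a flag adapted to its amenable (here: conformal $\times$ unipotent) structure, the contribution of each graded block to the top and bottom Lyapunov exponents is forced to be conformal, rather than merely having a nontrivial conformal factor with extra exponent splitting inside the block; this uses that any genuine exponent gap inside a block would contradict $\la_+ = \la_-$ via subadditivity of $\log\|A^t\|$ restricted to the relevant invariant subbundle. A secondary technical point is verifying that the finite cover $\mathcal X$ can be taken so that the Anosov flow, its holonomies, and the local product structure of $\mu$ all lift compatibly; this is routine since a finite cover of $SM$ is again a closed manifold with an Anosov flow and the lift of $\mu$ decomposes into finitely many ergodic components each with local product structure, but it should be stated. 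Everything else — inheritance of fiber bunching, pinching of exponents on subquotients, uniqueness of continuous representatives from full support — is routine given Lemma \ref{contsub} and Proposition \ref{existuhol}.
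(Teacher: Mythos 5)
Your overall strategy (Zimmer-style amenable reduction to produce a measurable flag with conformal graded pieces, then Lemma \ref{contsub} to upgrade to continuous holonomy-invariant objects) is the same skeleton as the paper's proof, but the proposal has a genuine gap at exactly the step you dismiss as a ``standard fact.'' Amenable reduction (Zimmer plus the Moore classification of maximal amenable subgroups of $GL(d,\R)$) does \emph{not} hand you a finite cover of $X$ on which the lifted cocycle preserves a measurable flag. What it gives is a measurable conjugacy into an amenable group $G$ that contains a finite-index subgroup preserving a flag $V^{1}\subset\dots\subset V^{k}$; the group $G$ itself, hence the cocycle, may only permute the finitely many images $V^{i,j}$ of these subspaces, so the invariant measurable object is the finite \emph{union} $\mathcal{U}^{i}=\bigcup_{j}V^{i,j}$, which is not a subbundle and to which Lemma \ref{contsub} does not apply. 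Two nontrivial steps are needed and are missing from your argument: (a) the passage from this invariant union to continuous subbundles $\E^{i,j}$ on an honest finite topological cover $\mathcal{X}$ of $X$, which is the content of the proof of Theorem 3.4 in \cite{KS} (a measurable ``finite ergodic extension'' is not automatically a finite cover of the manifold); and (b) the argument that the lifted cocycle preserves each $\E^{i,j}$ individually rather than permuting them. For (b) the paper uses the flow structure in an essential way: the index $S_{i}(x,t,j)$ defined by $A^{t}(\E^{i,j}_{x})=\E^{i,S_{i}(x,t,j)}_{g^{t}x}$ is continuous and integer-valued on the connected domain $\mathcal{X}\times\R$ and equals $j$ at $t=0$, hence is identically $j$. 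Your reduction to the time-one map $A^{1}$ alone cannot yield this: for a single diffeomorphism the finitely many subbundles could genuinely be permuted, and no equality of exponents rules that out.

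A second, related problem is that you have misplaced where the hypothesis $\la_{+}(\mathcal{A},\mu)=\la_{-}(\mathcal{A},\mu)$ is used. The measurable amenable reduction with conformal graded pieces requires no exponent hypothesis at all; the conformality of the blocks is an algebraic consequence of Moore's classification ($A_{i}\in\R\cdot SO(d_{i},\R)$), not of any ``measurable quasiconformality'' forced by $\la_{+}=\la_{-}$. The exponent hypothesis (together with fiber bunching, accessibility of $g^{1}$, and full support plus local product structure of $\mu$) is what drives the measurable-to-continuous upgrade via the invariance principle, i.e.\ Theorems 3.1, 3.3 and the proof of 3.4 in \cite{KS}. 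So the ``main obstacle'' you propose to spend care on is not where the difficulty lies, while the two steps above, which you treat as routine, are the actual content of the lemma. The rest of your outline (inheritance of fiber bunching by quotients, applying Lemma \ref{contsub} to the conformal structure on each $\E^{i}/\E^{i-1}$, uniqueness of continuous representatives from full support) is consistent with the paper once the flag itself has been correctly produced.
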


\begin{proof}
The vector bundle $\E$ admits a measurable trivialization on a set of full $\mu$-measure by Proposition 2.12 in \cite{BP}. Since $\mu$ is fully supported on $X$, this implies that there is a measurable map $P: \E \rightarrow X\times \R^{d}$ commuting with the projections onto $X$ and which is linear on the fibers. $\mathcal{B} = P\mathcal{A}P^{-1}$ is a measurable linear cocycle over $g^{t}$ on the trivial vector bundle $X \times \R^{d}$. We can apply Zimmer's amenable reduction theorem \cite{Zim} for $\R$-cocycles to conclude that there is a measurable map $C:X \rightarrow GL(d,\R)$ such that the cocycle $\mathcal{F} = C\mathcal{B}C^{-1}$ takes values in an amenable subgroup $G$ of $GL(d,\R)$. 

The maximal amenable subgroups of $GL(d,\R)$ are classified in \cite{Moo}. Any such group $G$ contains a finite index subgroup $K$ which is conjugate to a subgroup of a group of the form
\[
H(d_{1},\dots,d_{k}) = \left[\begin{array}{cccc} A_{1} & * & * & *\\
0 & A_{2} & * & * \\
0 & 0 & \ddots & * \\
0 & 0 & 0 & A_{k}
\end{array}\right]
\]
where $\sum_{i=1}^{k}d_{i} = d$ and $A_{i} \in \R \cdot SO(d_{i},\R)$. Thus, by conjugating the cocycle $\mathcal{F}$ if necessary, we may assume that $\mathcal{F}$ takes values in a group $G$ which contains a finite index subgroup $K$ that is contained in one of the groups $H(d_{1},\dots,d_{k})$. Let $G_{*}$ be the stabilizer in $G$ of the flag $V^{1} \subset V^{2} \subset \dots \subset V^{k} = \R^{d}$ corresponding to the group $H(d_{1},\dots,d_{k})$ containing $K$. Thus $V^{j}$ is the span of the first $\sum_{i=1}^{j}d_{i}$ coordinate axes in $\R^{d}$. Let $\l$ be the index of $G_{*}$ in $G$, which is finite since $K$ has finite index in $G$ and $K \subset G_{*}$.

Let $V^{i,j}$, $j = 1, \dots, \l$ be the at most $\l$ distinct images of the subspace $V^{i}$ under the action of $G$. Let $U^{i} = \bigcup_{j=1}^{\l}V^{i,j}$. Then let $\widehat{\E}^{i,j}_{x} = (C \circ P)^{-1}(x) \cdot V^{i,j}$, $\widehat{\mathcal{U}}^{i} = (C \circ P)^{-1}(x)\cdot U^{i}$. The proof of Theorem 3.4 in \cite{KS} shows that if the union of measurable subbundles $\widehat{\mathcal{U}}^{i}$ is invariant under a fiber bunched cocycle with equal extremal exponents over an accessible partially hyperbolic system (which we can take to be the time 1 map $A^{1}$ of the cocycle $\mathcal{A}$ over $g^{1}$), then there is a finite cover $\mathcal{X}$ of $X$ such that the individual subbundles $\widehat{\E}^{i,j}_{x}$ lift to subbundles $\E^{i,j}$ of the lifted bundle $\widetilde{\E}$ over $\mathcal{X}$ which agree $\mu$-a.e. with continuous subbundles which we will also denote $\E^{i,j}$. By construction the lifts $\mathcal{U}^{i}$ are invariant $\mu$-a.e. under the action of the lift $\widetilde{\mathcal{A}}$ of the cocycle $\mathcal{A}$. This is because we constructed these unions of subbundles using amenable reduction over the $\R$ action given by $\mathcal{A}$, and under our measurable trivialization $\mathcal{A}$ takes values in the group $G$. Since $\mathcal{A}$ is continuous and the lifts $\mathcal{U}^{i}$ are continuous after modification on a $\mu$-null set, we conclude that each $\mathcal{U}^{i}$ is everywhere invariant under $\mathcal{A}$.

For each $i \in \{1,\dots,k\}$, $x \in \mathcal{X}$, $t \in \R$, and $j \in \{1,\dots,\l\}$, there is thus an integer $S_{i}(x,t,j)$ such that $A^{t}(\E_{x}^{i,j}) = \E_{g^{t}x}^{i,S(x,t,j)}$. For a fixed $i$ and $j$, $S_{i}(x,t,j)$ depends continuously on $x$ and $t$ since both $\widetilde{A}^{t}$ and all of the subbundles $\E^{i,j}$ are continuous. Since for a fixed $i$ and $j$ we have that $S_{i}(x,t,j)$ is continuous, integer valued, and has connected domain $\mathcal{X} \times \R$, we conclude that $S_{i}(x,t,j):=S_{i}(j)$ is constant in $x$ and $t$. Furthermore, since $S_{i}(x,0,j) = j$, we conclude that $S_{i}(j) = j$. Hence all of the subbundles $\E^{i,j}$ are invariant under $\widetilde{\mathcal{A}}$ as well. In particular $\mathcal{A}$ preserves the flag $\E^{1} \subset \dots \subset \E^{k}$ which arises as the continuous extension of the lift of the flag coming from the standard flag $V^{1} \subset V^{2} \subset \dots \subset V^{k}$. 

To prove the second claim, note that for any $r \geq 1$, the induced action of the cocycle $\mathcal{F}$ on $V^{\sum_{i=1}^{r}d_{i}}/V^{\sum_{i=1}^{r-1}d_{i}} = \R^{d_{r}}$ preserves the standard Euclidean conformal structure on $\R^{d_{r}}$. This immediately implies that $\widetilde{\mathcal{A}}$ preserves a measurable conformal structure on the corresponding quotient bundle $\E^{j}/\E^{j-1}$. By Lemma \ref{contsub}, this measurable conformal structure coincides $\mu$-a.e. with a holonomy invariant continuous conformal structure. 
\end{proof}

\begin{lem}\label{confcover}
Suppose that there is a finite cover $\mathcal{X}$ of $X$ such that the lifted cocycle $\widetilde{\mathcal{A}}$ on the lifted bundle $\widetilde{\mathcal{E}}$ preserves a continuous holonomy-invariant conformal structure. Then $\mathcal{A}$ also preserves a continuous holonomy-invariant conformal structure. 
\end{lem}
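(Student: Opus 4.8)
The plan is to build the desired conformal structure on $\E$ by averaging the given structure $\widetilde{\tau}$ over the fibers of the covering map $p\colon \mathcal{X}\to X$, where the average is taken as a barycenter in the space of conformal structures.

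First I would record the facts needed about conformal structures. After fixing an identification of a $d$-dimensional real vector space with $\R^{d}$, the conformal structures on $\R^{d}$ are in bijection with the space $\mathcal{P}_{d}$ of positive definite symmetric $d\times d$ matrices of determinant $1$; this is the Riemannian symmetric space $SL(d,\R)/SO(d,\R)$, hence a complete simply connected manifold of nonpositive curvature (a Hadamard manifold). A linear isomorphism $L\colon V\to W$ of $d$-dimensional spaces induces, by pullback of conformal structures, a bijection from the conformal structures on $W$ to those on $V$; written in bases this is the action of an element of $GL(d,\R)$ on $\mathcal{P}_{d}$, which is by isometries. On a Hadamard manifold any finitely supported probability measure has a unique Cartan barycenter, which depends continuously on the points and is equivariant under isometries. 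It follows that finitely many conformal structures on a fixed $d$-dimensional vector space have a well-defined barycenter, independent of the choice of basis, and that this operation is equivariant under pullback by linear isomorphisms and continuous in the data.

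Now I would carry out the averaging. Since $X=SM$ is connected, $p$ has a constant finite degree $n$; write $p^{-1}(x)=\{\tilde x_{1},\dots,\tilde x_{n}\}$. Because $\widetilde{\E}$ is the pullback $p^{*}\E$, each fiber $\widetilde{\E}_{\tilde x_{i}}$ is canonically identified with $\E_{x}$, so each $\tilde x_{i}$ determines a conformal structure $\widetilde{\tau}_{\tilde x_{i}}$ on $\E_{x}$; define $\tau_{x}$ to be the barycenter of $\{\widetilde{\tau}_{\tilde x_{1}},\dots,\widetilde{\tau}_{\tilde x_{n}}\}$. To see $\tau$ is continuous, work over an open set $U\subset X$ trivializing both $\E$ and $p$: then $p^{-1}(U)=U_{1}\sqcup\cdots\sqcup U_{n}$ with $p|_{U_{i}}$ a homeomorphism onto $U$, the structure $\widetilde{\tau}$ restricts over each $U_{i}$ to a continuous $\mathcal{P}_{d}$-valued function which we transport to $U$, and $\tau|_{U}$ is the pointwise barycenter of these $n$ continuous functions, hence continuous; independence of the trivialization follows from equivariance of the barycenter under the $GL(d,\R)$-valued transition functions.

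Finally I would check invariance. Let $\widetilde{g}^{t}$ be the lift of $g^{t}$ to $\mathcal{X}$. For fixed $x$ the $n$ paths $t\mapsto\widetilde{g}^{t}\tilde x_{i}$ are the $n$ distinct lifts of $t\mapsto g^{t}x$, so they are pairwise disjoint and $\{\widetilde{g}^{t}\tilde x_{1},\dots,\widetilde{g}^{t}\tilde x_{n}\}=p^{-1}(g^{t}x)$ for every $t$. Under the canonical fiber identifications $A^{t}_{x}$ equals $\widetilde{A}^{t}_{\tilde x_{i}}$, and $\widetilde{\mathcal{A}}$-invariance of $\widetilde{\tau}$ says precisely that $(A^{t}_{x})^{*}\widetilde{\tau}_{\widetilde{g}^{t}\tilde x_{i}}=\widetilde{\tau}_{\tilde x_{i}}$ as conformal structures. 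Applying equivariance of the barycenter under the isometry $(A^{t}_{x})^{*}$, and using that the $\widetilde{g}^{t}\tilde x_{i}$ exhaust $p^{-1}(g^{t}x)$, gives $(A^{t}_{x})^{*}\tau_{g^{t}x}=\tau_{x}$ as conformal structures, i.e.\ $\tau$ is $\mathcal{A}$-invariant. Holonomy invariance is verified identically: for $r$ small enough $p$ restricts to a homeomorphism from $\widetilde{W}^{*}_{r}(\tilde x_{i})$ onto $W^{*}_{r}(x)$ for $*=u,s$, so $h^{*}_{xy}$ coincides with $\widetilde{h}^{*}_{\tilde x_{i}\tilde y_{i}}$, where $\tilde y_{i}$ is the unique preimage of $y$ on $\widetilde{W}^{*}_{r}(\tilde x_{i})$; the points $\tilde y_{i}$ exhaust $p^{-1}(y)$, and holonomy invariance of $\widetilde{\tau}$ together with equivariance of the barycenter yields $(h^{*}_{xy})^{*}\tau_{y}=\tau_{x}$. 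The only genuinely delicate point is the well-definedness, continuity, and isometry-equivariance of the Cartan barycenter on the space of conformal structures; granting that, the rest is routine bookkeeping with the covering map.
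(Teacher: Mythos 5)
Your proof is correct and uses essentially the same argument as the paper: both average the lifted conformal structure via the barycenter in the nonpositively curved space $SL(d,\R)/SO(d,\R)$ of conformal structures on each fiber, relying on continuity and isometry-equivariance of that barycenter, and then check flow- and holonomy-invariance by equivariance. The only difference is that you take the barycenter over the fiber $p^{-1}(x)$ directly on $X$, whereas the paper averages over the deck-group orbit upstairs and then descends the resulting invariant structure; your variant is, if anything, slightly more robust, since it does not require the finite cover to be regular for the descent step.
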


\begin{proof}
Let $\widetilde{\mathcal{C}}_{x}$ be the space of conformal structures on the vector space $\widetilde{\E}_{x}$. $\widetilde{\mathcal{C}}_{x}$ can be identified with the Riemannian symmetric space $SL(d,\R)/SO(d,\R)$ and in fact carries a canonical Riemannian metric of nonpositive curvature for which the induced map $\widetilde{\mathcal{C}}_{x} \rightarrow \widetilde{\mathcal{C}}_{g^{t}x}$ over the cocycle $\widetilde{\mathcal{A}}$ is an isometry \cite{KS}. In particular, for compact subsets $K \subset \widetilde{\mathcal{C}}_{x}$ there is a natural barycenter map $K \rightarrow \text{bar}(K)$ mapping $K$ to its center of mass. 

Let $\tau$ be the continuous holonomy-invariant conformal structure preserved by $\widetilde{\mathcal{A}}$. Let $H$ be the group of covering transformations for $\mathcal{X}$ over $X$, which also acts as the group of covering transformations for $\widetilde{\E}$ over $\E$. Let $K_{x} = \bigcup_{\rho \in H}\{\rho \cdot \tau_{\rho^{-1}(x)}\} \subset \widetilde{C}_{x}$. The collection of compact subsets $K_{x}$ depends continuously on $x$, is holonomy-invariant, and is invariant under $\mathcal{A}$. Hence all of the same is true of the family of barycenters $\sigma_{x}:= \text{bar}(K_{x})$. We thus get a conformal structure $\sigma$ that is continuous, holonomy-invariant, invariant under $\widetilde{\mathcal{A}}$, and also invariant under the action of the deck group $H$. $\sigma$ then descends to the desired conformal structure on $\E$. 
\end{proof}

In subsequent sections we will use Lemmas \ref{amen} and \ref{confcover} together to construct invariant conformal structures for our cocycles of interest. We will first use Lemma \ref{amen} to construct an invariant flag on a finite cover, then we will show this flag must be trivial, then lastly we will use Lemma \ref{confcover} to push the invariant conformal structure back down to our original bundle. 

\begin{rem}
The assumption that the stable and unstable distributions $E^{u}$ and $E^{s}$ of $g^{t}$ are not jointly integrable is likely unnecessary in Lemmas \ref{contsub} and \ref{amen}. Different arguments are needed in the case that $E^{u}$ and $E^{s}$ are jointly integrable however, as one cannot use accessibility of the time one map $g^{1}$ in this case. 
\end{rem}

\end{subsection}

\begin{subsection}{Lyapunov exponents and periodic approximation} For a cocycle $\mathcal{A}: \E \times \R \rightarrow \E$ over $g^{t}$ and an ergodic $g^{t}$-invariant measure $\mu$, the multiplicative ergodic theorem \cite{BP} implies that there is a $g^{t}$-invariant subset $\Lambda \subset X$ with $\mu(\Lambda) = 1$ such that over $\Lambda$ there is a measurable $g^{t}$-invariant splitting 
\[
\E = \E^{1} \oplus \E^{2} \oplus \dots \E^{k}
\]
and numbers $\la_{1} < \la_{2} < \dots < \la_{k}$ such that 
\[
\lim_{t \rightarrow \infty} \frac{1}{t}\log \|A^{t}(v)\| = \la_{i}, \; v \in \E^{i}.
\]
The numbers $\la_{i}$ are the \emph{Lyapunov exponents} of $\mathcal{A}$. The extremal Lyapunov exponents $\la_{+}$ and $\la_{-}$ of $\mathcal{A}$ with respect to $\mu$ correspond to the top and bottom exponents $\la_{k}$ and $\la_{1}$ respectively. 

For each periodic point $p$, we let $\mu_{p}$ denote the unique $g^{t}$-invariant probability measure supported on the orbit of $p$, which may be obtained as the normalized pushforward of Lebesgue measure on $\R$ by the map $t \rightarrow g^{t}(p)$. The following theorem of Kalinin enables us to approximate the Lyapunov exponents of any $g^{t}$-invariant measure by the Lyapunov exponents of measures concentrated on a periodic orbit. This theorem is the essential new tool needed for the proof of the Livsic theorem in the case of matrix cocycles. The fact that $g^{t}$ satisfies the closing property necessary in the hypothesis of the theorem as stated in \cite{Kal} is the well known Anosov closing lemma for flows which can be found in Chapter 18 of \cite{HK}. The statement of Theorem \ref{perapprox} in \cite{Kal} assumes that the vector bundle $\E$ over $X$ is trivial, but as remarked by Kalinin in the paper, this hypothesis is easily removed since the proof of the theorem only uses local comparisons between fibers. When we say that the Lyapunov exponents are counted with multiplicity, we mean that each exponent appears a number of times equal to the dimension $\dim \E^{i}$ of its corresponding measurable invariant subbundle. 

\begin{thm}\label{perapprox}[\cite{Kal}]
Let $\E$ be a $d$-dimensional H\"older continuous vector bundle over $X$, and $\mathcal{A}$ a H\"older continuous cocycle on $\E$ over $g^{t}$. Let $\mu$ be an ergodic $g^{t}$-invariant measure, and let $\la_{1} \leq \la_{2} \leq \dots \leq \la_{d}$ be the Lyapunov exponents of $\mathcal{A}$ with respect to $\mu$, counted with multiplicity. Then for every $\e > 0$, there is a periodic point $p$ of $g^{t}$ such that the Lyapunov exponents $\la_{1}^{(p)} \leq \la_{2}^{(p)} \leq \dots \leq \la_{d}^{(p)}$ of $\mathcal{A}$ with respect to $\mu_{p}$ satisfy 
\[
|\la_{i} - \la_{i}^{(p)}| < \e
\]
for each $1 \leq i \leq d$. 
\end{thm}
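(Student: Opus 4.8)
The plan is to reduce the statement to a question about the \emph{top} Lyapunov exponent of the exterior power cocycles, produce the periodic orbit via the Anosov closing lemma, and control the discrepancy between the cocycle along the periodic orbit and along a regular orbit.

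First I would observe that it suffices to find, for each $\e>0$, a single periodic point $p$ such that for every $k=1,\dots,d$ the top Lyapunov exponent $\Lambda_k(\nu)$ of the cocycle $\wedge^{k}\mathcal{A}$ satisfies $|\Lambda_k(\mu_p)-\Lambda_k(\mu)|<\e$. Indeed $\wedge^{k}\mathcal{A}$ is again a H\"older continuous cocycle over $g^{t}$ (it is polynomial in $\mathcal{A}$); $\Lambda_k(\mu)$ equals the sum of the $k$ largest Lyapunov exponents of $\mathcal{A}$ with respect to $\mu$; and $\Lambda_k(\mu_p)=\l(p)^{-1}\log\rho(\wedge^{k}\mathcal{A}^{\l(p)}_{p})$ is $\l(p)^{-1}$ times the log of the product of the $k$ largest eigenvalue moduli of $\mathcal{A}^{\l(p)}_p$. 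Subtracting consecutive $\Lambda_k$'s recovers the individual exponents, so $\e$-closeness of all the $\Lambda_k$'s yields $2\e$-closeness of the $\la_i^{(p)}$; replacing $\e$ by $\e/2$ gives the theorem. The case $k=d$ is special: $\Lambda_d(\mu)=\int\log|\det\mathcal{A}^{1}|\,d\mu$, which is a Birkhoff average of the H\"older \emph{function} $\log|\det\mathcal{A}^{1}|$, so for a periodic $p$ shadowing an orbit segment of a $\mu$-generic point we will get $|\Lambda_d(\mu_p)-\Lambda_d(\mu)|=o(1)$ directly, with a two-sided estimate; this anchors the flag.

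Second, the setup. Fix a small parameter $\e$. By the Oseledets theorem, the Kingman subadditive ergodic theorem applied to $\log\|\wedge^{k}\mathcal{A}^{t}\|$, and passage to a Pesin regular set $\Lambda_{0}$ of positive $\mu$-measure on which the Oseledets estimates for $\mathcal{A}$ and all $\wedge^{k}\mathcal{A}$ are uniform with error budget $\e$, one selects $x\in\Lambda_{0}$ together with times $T_{n}\to\infty$ with $g^{T_{n}}x\in\Lambda_{0}$ and $d(g^{T_{n}}x,x)\to 0$. The Anosov closing lemma for flows (Chapter 18 of \cite{HK}) then produces periodic points $p_{n}$ of period $\l_{n}=T_{n}+o(1)$ whose orbits exponentially shadow $(g^{t}x)_{0\le t\le T_{n}}$, i.e.\ $d(g^{t}p_{n},g^{t}x)\le C\,d(g^{T_{n}}x,x)\bigl(e^{-\lambda t}+e^{-\lambda(T_{n}-t)}\bigr)$. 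For the upper bound $\Lambda_k(\mu_{p_n})\le\Lambda_k(\mu)+O(\e)$ one uses submultiplicativity, $\Lambda_k(\mu_{p_n})\le\l_n^{-1}\log\|\wedge^{k}\mathcal{A}^{\l_n}_{p_n}\|$, together with a telescoping comparison of $\wedge^{k}\mathcal{A}^{\l_n}_{p_n}$ and $\wedge^{k}\mathcal{A}^{T_n}_{x}$: the one-step errors are bounded by $C\,d(g^{t}p_{n},g^{t}x)^{\beta}$ by H\"older continuity, hence summable by the exponential shadowing, and the regularity of $x$ then forces $\l_n^{-1}\log\|\wedge^{k}\mathcal{A}^{\l_n}_{p_n}\|\le\Lambda_k(\mu)+O(\e)+o(1)$; crucially this direction survives without any rate on $d(g^{T_n}x,x)$, since the amplification factors enter only through a logarithm divided by $T_n$.

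Third, the lower bound $\Lambda_k(\mu_{p_n})\ge\Lambda_k(\mu)-O(\e)$, which is where the real work lies, since a single matrix can have operator norm far exceeding its spectral radius and no norm estimate alone can bound eigenvalue moduli from below. Here I would build an invariant cone field for $\wedge^{k}\mathcal{A}$: on $\Lambda_{0}$ the top Oseledets subspace of $\wedge^{k}\mathcal{A}$ is expanded uniformly by roughly $e^{t\Lambda_k}$ with a uniform spectral gap to the rest of the bundle, so a cone $\mathcal{C}$ around it is mapped strictly inside itself by $\wedge^{k}\mathcal{A}^{T_n}_{x}$ with expansion at least $e^{T_n(\Lambda_k-\e)}$; because $p_n$ exponentially shadows $x$ and $g^{T_n}x$ returns close to $x$, one argues that for $n$ large the self-map $\wedge^{k}\mathcal{A}^{\l_n}_{p_n}$ of $(\wedge^{k}\E)_{p_n}$ (identified with itself through the short holonomy at the periodic point) still maps a slightly fattened cone strictly into itself with expansion at least $e^{T_n(\Lambda_k-2\e)}$. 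Iterating the cone gives $\|(\wedge^{k}\mathcal{A}^{\l_n}_{p_n})^{m}v\|\ge e^{mT_n(\Lambda_k-2\e)}\|v\|$ for $v\in\mathcal{C}$ and all $m$, whence $\rho(\wedge^{k}\mathcal{A}^{\l_n}_{p_n})\ge e^{T_n(\Lambda_k-2\e)}$ and $\Lambda_k(\mu_{p_n})\ge\tfrac{T_n}{\l_n}(\Lambda_k-2\e)\to\Lambda_k-2\e$. Combined with the second step and the determinant estimate, letting $\e\to0$ along an appropriate sequence of $p_n$ finishes the proof.

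The main obstacle is making the cone step quantitatively honest: the shadowing error decays exponentially in time, but the finite-time Lyapunov data of the regular orbit only stabilizes subexponentially (and $d(g^{T_n}x,x)\to0$ with no available rate), so one cannot naively control the relative size of the perturbation $\wedge^{k}\mathcal{A}^{\l_n}_{p_n}-\wedge^{k}\mathcal{A}^{T_n}_{x}$ against the spectral gap over the whole segment at once. The resolution—localizing the perturbation near the two ends of the shadowed segment, where the shadowing error is merely of size $d(g^{T_n}x,x)^{\beta}$ and hence small, propagating the cone in blocks whose length is fixed but long enough to realize the gap, and tracking the cone invariance uniformly in $n$ using the uniform Pesin estimates on $\Lambda_{0}$ (equivalently, working in Lyapunov norms)—is precisely the technical heart of Kalinin's argument, and is the step I expect to require the most care.
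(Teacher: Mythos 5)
The paper does not actually prove Theorem \ref{perapprox}: it is quoted from Kalinin \cite{Kal}, and the only content supplied here is the remark that the geodesic flow has the required closing property (the Anosov closing lemma for flows) and that the triviality assumption on $\E$ in \cite{Kal} can be dropped because the argument only makes local comparisons of fibers. So the benchmark for your sketch is Kalinin's own proof, and your outline does follow its strategy: reduce to the top exponents of the exterior power cocycles $\wedge^{k}\mathcal{A}$, close up a long return of a Pesin-regular point, and compare the product along the shadowing periodic orbit with the product along the regular segment.

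Two caveats, the second of which is a genuine gap as written. First, the upper bound is not obtained by the additive telescoping you describe: the one-step errors $Cd(g^{t}p_{n},g^{t}x)^{\beta}$ get amplified by the norms of the complementary partial products, which are not controlled by $e^{(\Lambda_{k}+\e)(\cdot)}$ on subsegments starting mid-orbit; the comparison has to be carried out multiplicatively in Lyapunov norms along the orbit of $x$ (one-step Lyapunov norms are at most $e^{\Lambda_{k}+\e}$, the relative perturbation decays exponentially in $\min(t,T_{n}-t)$, so the whole product only picks up a bounded factor) --- this is also exactly what makes the absence of a rate for $d(g^{T_{n}}x,x)$ harmless. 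Second, the lower bound, which you rightly call the heart, is left as a placeholder, and it cannot be sidestepped: for instance, applying the norm upper bound to exterior powers of the inverse cocycle over $g^{-t}$ together with the determinant estimate only gives upper bounds on the top partial sums and lower bounds on the bottom partial sums of the exponents at $p$, and these do not prevent the eigenvalues of $\mathcal{A}^{\l(p)}_{p}$ from collapsing together (already for $d=2$, $a_{1}=a_{2}=\tfrac{1}{2}(m_{1}+m_{2})$ is consistent with all such inequalities). So one really must track the top Oseledets block of $\wedge^{k}\mathcal{A}$ around the closed orbit: prove cone invariance and expansion for the single period map, working in the Lyapunov norms along the orbit of the regular point $x$ (where the one-step gap estimates hold even when the orbit leaves the Pesin set), using that $x$ and $g^{T_{n}}x$ lie in the same Pesin set so that the splittings and norms at the two gluing points nearly match, and that the perturbation from the true segment to the periodic one is summable in these norms. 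That is precisely the content of Kalinin's main lemma; your plan is viable and structurally faithful to \cite{Kal}, but until that step is executed the proposal is an outline rather than a proof.
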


For a periodic point $p$ there is a simple relationship between the Lyapunov exponents $\la_{i}^{(p)}$ associated to $\mu_{p}$ and the complex eigenvalues $\chi_{i}^{(p)}$ of the map $A^{\l(p)}_{p}: \E_{p} \rightarrow \E_{p}$. Let $\E_{p} = \E^{1}_{p}  \oplus \dots \oplus \E^{k}_{p}$ be the direct sum decomposition of $\E_{p}$ from the multiplicative ergodic theorem and let $\E_{p} = \mathcal{V}^{1} \oplus \dots \mathcal{V}^{r}$ be the primary decomposition of the linear transformation $A^{\l(p)}_{p}: \E_{p} \rightarrow \E_{p}$, where each $\mathcal{V}^{i}$ corresponds to an irreducible factor of the minimal polynomial $A^{\l(p)}_{p}$. An easy linear algebra exercise shows that the primary decomposition is subordinate to the Oseledec decomposition, i.e., for each $1 \leq i \leq k$, 
\[
\E^{i} = \mathcal{V}^{i_{1}} \oplus \dots \oplus \mathcal{V}^{i_{n}}
\]
for some integers $1 \leq i_{1},\dots,i_{n} \leq r$. Furthermore, we have the relationship 
\[
\frac{1}{\l(p)}\log |\chi_{i_{j}}^{(p)}| = \la_{i}^{(p)}, \; 1 \leq j \leq n
\] 
for the real eigenvalues (or conjugate pairs of complex eigenvalues) corresponding to the subspaces $\mathcal{V}^{i_{j}}$. Thus the Lyapunov exponents of $\mu_{p}$ are given by the logarithms of the absolute values of the eigenvalues of $A^{\l(p)}_{p}$, normalized by the period of $p$. 
\end{subsection}
\end{section}

\begin{section}{Proof of Theorems 1.1 and 1.3}\label{firstthm}
We are now ready to prove Theorem \ref{perhyp} and Theorem \ref{hyp}. A subbundle $\mathcal{V} \subset \E$ is \emph{proper} if $0 < \dim \mathcal{V} < \dim \E$. For $0 < \alpha < 2$ we say that $g^{t}$ is $\alpha$-bunched if there is some $T > 0$ such that for $t \geq T$
\[
\|Dg^{t}_{p}|E^{u}_{p}\|^{\alpha} \cdot \|Dg^{t}_{p}|E^{s}_{p}\| \cdot \|(Dg^{t}_{p})^{-1}|E^{u}_{g^{t}(p)}\| < 1, \; t \geq T, p \in SM
\]
For an in-depth discussion of the relationship between $\alpha$-bunching and the regularity of the Anosov splitting $T(SM) = E^{u} \oplus E^{c} \oplus E^{s}$, see \cite{Has2}.

\begin{lem}\label{hypirr}
Let $g^{t}$ be the geodesic flow on the unit tangent bundle of a closed negatively curved manifold. Suppose that $g^{t}$ is $1$-bunched and that there is a $g^{t}$-invariant  fully supported ergodic probability measure $\mu$ with local product structure such that $\la_{+}(Dg^{t}|E^{u},\mu) = \la_{-}(Dg^{t}|E^{u},\mu)$. Then $E^{u}$ has no proper measurable $g^{t}$-invariant subbundles. 
\end{lem}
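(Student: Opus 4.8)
The plan is to argue by contradiction: suppose $E^{u}$ has a proper measurable $g^{t}$-invariant subbundle, first promote it to a \emph{continuous} holonomy-invariant subbundle using the amenable reduction machinery of Section~\ref{contamen}, and then kill it using the global product structure of the geodesic flow on $S\widetilde{M}$. To set this up I would first note that the $1$-bunching hypothesis is exactly the fiber bunching condition for the derivative cocycle $A^{t}:=Dg^{t}|E^{u}$ (for the geodesic flow $\|Dg^{t}|E^{s}\|$ and $\|Dg^{-t}|E^{u}\|$ are comparable via the contact pairing $d\theta$ between $E^{u}$ and $E^{s}$, so the $g^{t}$-bunching inequality and the two fiber-bunching inequalities coincide up to constants and base-point shifts); moreover $E^{u}$ is a H\"older bundle on which $A^{t}$ is H\"older. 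Hence by Proposition~\ref{existuhol} the cocycle $A^{t}$ admits stable and unstable holonomies $h^{s},h^{u}$. Since a geodesic flow is a contact Anosov flow, $E^{u}$ and $E^{s}$ are not jointly integrable, so Lemma~\ref{contsub} applies to $A^{t}$.

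Now suppose $V\subseteq E^{u}$ is a proper measurable $g^{t}$-invariant subbundle. By Lemma~\ref{contsub}, $V$ agrees $\mu$-a.e. with a continuous $g^{t}$-invariant, $h^{u}$- and $h^{s}$-invariant subbundle; since $\mu$ is fully supported this subbundle is everywhere defined, has constant dimension $k$ with $0<k<m-1$, and is genuinely $g^{t}$- and holonomy-invariant. Call it $V$ again. Lift it to $\widetilde{V}\subseteq E^{u}$ over $S\widetilde{M}$, which is invariant under $\widetilde{g}^{t}$, $\pi_{1}(M)$, and the globally defined center-unstable and center-stable holonomies $h^{cu},h^{cs}$ (these exist on whole leaves by the global product structure / Hopf parametrization). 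The key computation is that $\widetilde{V}$ is \emph{constant along each center-unstable leaf} in the trivialization of $E^{u}$ by $h^{cu}$. Indeed, fixing $v_{0}$ with $(v_{0})_{-}=\xi$ and writing $V^{\xi}(w):=h^{cu}_{w v_{0}}(\widetilde{V}_{w})\subseteq E^{u}_{v_{0}}$ for $w\in \widetilde{W}^{cu}(v_{0})$: the $h^{u}$-invariance of $\widetilde{V}$ gives that $V^{\xi}$ is constant on strong unstable leaves, while property (3) of Definition~\ref{defuhol} together with the identity $h^{cu}_{g^{s}v_{0},v_{0}}=(A^{s}_{v_{0}})^{-1}$ (two points on the same orbit lie in the same strong unstable leaf only if equal) yields $V^{\xi}(g^{s}w)=\big(h^{cu}_{g^{s}v_{0},v_{0}}\circ A^{s}_{v_{0}}\big)\,V^{\xi}(w)=V^{\xi}(w)$, so $V^{\xi}$ is constant along flow lines; since $\widetilde{W}^{cu}(v_{0})=\bigcup_{s}g^{s}(\widetilde{W}^{u}(v_{0}))$ this forces $V^{\xi}\equiv W_{\xi}$ for a single $k$-plane $W_{\xi}\subseteq E^{u}_{v_{0}}$. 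Symmetrically $\widetilde{V}$ is constant along center-stable leaves in the $h^{cs}$-trivialization, giving $k$-planes $W'_{\eta}$. Thus $\widetilde{V}$ descends to a continuous $\pi_{1}(M)$-equivariant $k$-plane subbundle $\widehat{V}$ of an $(m-1)$-plane bundle $\widehat{E}$ over $\partial\widetilde{M}$ (indexed by the backward endpoint $v_{-}$), with a parallel picture indexed by $v_{+}$ and a fiberwise compatibility between the two at every point of $S\widetilde{M}$.

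The final and hardest step is to show that no such rigid structure can exist when $0<k<m-1$, and this is the main obstacle. It is here that the special geometry of the geodesic flow is essential: for a general fiber bunched cocycle over a general Anosov flow with equal extremal exponents the conclusion is false (a direct sum of two equal-exponent conformal blocks is a counterexample), so one must use that $A^{t}$ is the derivative cocycle and that $\partial\widetilde{M}$ carries the minimal, topologically transitive $\pi_{1}(M)$-action. I expect to argue as follows: over $\partial\widetilde{M}\setminus\{\eta\}$, which is contractible, the holonomy $h^{cs}$ trivializes $\widehat{E}$ flatly, so $\widehat{V}$ there is a genuine constant subspace; letting $\eta$ vary, concatenating $h^{cs}$-parallel transports with the $\pi_{1}(M)$-action, and using accessibility of $g^{1}$ (equivalently, non-joint-integrability of $E^{u},E^{s}$, the hypothesis already invoked), one gets a subgroup of $GL(E^{u}_{v_{0}})$ preserving $W_{\xi}$ whose closure is too large to fix a proper subspace — concretely, it forces a $Dg^{t}$-equivariant splitting of the unstable distribution, contradicting the irreducibility of the negatively curved universal cover. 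An alternative route is to bootstrap the regularity of the continuous holonomy-invariant $V$ from the $1$-bunching and invoke existing rigidity theorems for geodesic flows with invariant subbundles; but the boundary argument avoids needing regularity beyond what $1$-bunching gives.
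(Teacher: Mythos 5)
Your first step coincides with the paper's: $1$-bunching gives fiber bunching of $Dg^{t}|E^{u}$, and Lemma \ref{contsub} upgrades the measurable invariant subbundle $V$ to a continuous, $g^{t}$- and holonomy-invariant one (full support of $\mu$ making it globally defined). Your descent of $V$ to a $\pi_{1}(M)$-equivariant, holonomy-parallel object over $\partial\widetilde{M}$ is also consistent with what the paper does. But the step you yourself flag as ``the main obstacle'' is exactly where the proof lives, and your sketch of it is a genuine gap. The appeal to ``the irreducibility of the negatively curved universal cover'' is circular: irreducibility of the unstable distribution under $Dg^{t}$ is precisely the statement being proved, and there is no off-the-shelf irreducibility of $E^{u}$ to contradict. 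Likewise, the suggestion that concatenating $h^{cs}$-transports with the $\pi_{1}(M)$-action produces a subgroup of $GL(E^{u}_{v_{0}})$ ``too large to fix a proper subspace'' is unsubstantiated; continuity, equivariance and holonomy-flatness of a $k$-plane field over the boundary do not by themselves yield a contradiction by soft group-theoretic means, and nothing in your outline uses the $1$-bunching hypothesis beyond fiber bunching, whereas the paper needs it a second time in an essential way.

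The paper's actual mechanism is different and supplies the missing input. Since $g^{t}$ is $1$-bunched, the Anosov splitting is $C^{1}$ (Hasselblatt), so the Kanai connection $\nabla$ exists; along unstable leaves $\nabla$ is $C^{1}$, flat and torsion-free, and by the uniqueness clause of Proposition \ref{existuhol} its parallel transport \emph{is} the unstable holonomy. Hence the continuous holonomy-invariant $V$ is $\nabla$-parallel along $W^{u}$, is spanned by parallel (hence $C^{1}$) vector fields whose brackets vanish by torsion-freeness, and so by the $C^{1}$ Frobenius theorem $V$ integrates to a foliation of the unstable leaves. Moreover, $C^{1}$-ness of the splitting makes the boundary charts $\pi_{y}^{-1}\circ\pi_{x}$ differentiable with derivative the center-stable holonomy, so this foliation descends to a $\pi_{1}(M)$-invariant continuous foliation of $\partial\widetilde{M}$; the contradiction then comes from an external geometric theorem (Hamenst\"adt, see also Foulon) that every $\pi_{1}(M)$-invariant continuous foliation of the visual boundary is trivial. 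Your proposal never establishes integrability of $V$ (a plane field invariant under flow and holonomies need not be integrable a priori), and it replaces the foliation-triviality theorem with a hope; without both ingredients the argument does not close. Your parenthetical observation that equality of extremal exponents alone cannot suffice for general cocycles is correct, but the extra structure the paper exploits is precisely the Kanai connection and the boundary foliation rigidity, not an accessibility or holonomy-group-size argument.
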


\begin{proof}
Let $V \subset E^{u}$ be a $k$-dimensional measurable invariant subbundle. Since $g^{t}$ is $1$-bunched the Anosov splitting of $g^{t}$ is $C^{1}$ \cite{Has2} and thus $E^{u}$ is a $C^{1}$-subbundle of $T(SM)$. $1$-bunching of $g^{t}$ also implies that the cocycle $Dg^{t}|E^{u}$ is fiber bunched. By Theorem \ref{contsub}, $V$ thus coincides $\mu$-a.e. with a continuous holonomy invariant subbundle which we will still denote by $V$.  

We now describe an alternative realization of the holonomies for $Dg^{t}|E^{u}$. Recall that $\theta$ denotes the the invariant contact form for $g^{t}$. Since the Anosov splitting of $g^{t}$ is $C^{1}$, and $g^{t}$ preserves  $\theta$, there is a unique $g^{t}$-invariant connection $\nabla$ on $SM$ such that the torsion of $\nabla$ is given by $\theta \otimes \dot{g}$, where $\dot{g}$ is the vector field generating $g^{t}$ on $SM$. This connection is called the \emph{Kanai connection} and was constructed for contact Anosov flows with $C^{1}$ Anosov splitting in \cite{Kan1}. 

In Lemma 1.1 of \cite{Kan1}, it is shown that the unstable foliation $W^{u}$ is totally geodesic for $\nabla$, and that $\nabla$ is $C^{1}$ when restricted to the leaves of the unstable foliation, and further that $\nabla$ is flat when restricted to $W^{u}$ leaves. The parallel transport induced by $\nabla$ on unstable leaves is thus a $C^{1}$ unstable holonomy for $g^{t}$. From the uniqueness clause of Proposition \ref{existuhol}, parallel transport by $\nabla$ coincides with the unstable holonomy constructed in Proposition \ref{existuhol}, and thus $V$ is parallel with respect to $\nabla$ along unstable leaves. 

For a given unstable leaf $W^{u}(p)$ we can then find parallel vector fields $X_{1},\dots,X_{k}$ spanning the restriction of $V$ to $W^{u}(p)$. These vector fields are $C^{1}$ since $\nabla|W^{u}(p)$ is $C^{1}$. The restriction of $\nabla$ to $W^{u}(p)$ is torsion-free since $\eta$ vanishes on $W^{u}$. For $C^{1}$ vector fields, there is still a well-defined Lie bracket, and the Frobenius theorem characterizing integrability of a distribution remains true \cite{RS}. Since 
\[
0 = \nabla_{X_{i}}X_{j}- \nabla_{X_{j}}X_{i} = [X_{i},X_{j}], \; 1 \leq i,j\leq k
\] 
we then conclude via the $C^{1}$ Frobenius theorem that $V$ is a uniquely integrable subbundle of $TW^{u}$. Hence there is a $C^{2}$ foliation $\mathcal{V}$ of $SM$ which is tangent to $V$, such that each of the leaves $\mathcal{V}(p)$ is contained within the corresponding unstable leaf $W^{u}(p)$. 

Then $\mathcal{V}$ lifts to a foliation $\widetilde{\mathcal{V}}$ of $S\widetilde{M}$ which is invariant under the lifted action of $g^{t}$ and the action of $\pi_{1}(M)$. We adopt the notation of the Hopf parametrization described in Section \ref{amenred}. For each $x \in S\widetilde{M}$ there is a homeomorphism $\pi_{x}: \widetilde{W}^{u}(x) \rightarrow \p \widetilde{M} \backslash \{x_{-}\}$ given by projection, where $x_{-}$ is the negative endpoint of the geodesic through $x$ on $\p \widetilde{M}$. Then for a pair of points $x,y \in S\widetilde{M}$, we consider the homeomorphism 
\[
\pi_{y}^{-1} \circ \pi_{x}: \widetilde{W}^{u}(x) \backslash \{\pi_{x}^{-1}(y_{-})\} \rightarrow \widetilde{W}^{u}(y) \backslash \{\pi_{y}^{-1}(x_{-})\} 
\]
The homeomorphism $\pi_{y}^{-1} \circ \pi_{x}$ is easily described in terms of the global product structure of $S\widetilde{M}$:  for a point $z \in \widetilde{W}^{u}(x) \backslash \{\pi_{x}^{-1}(y_{-})\}$, $\pi_{y}^{-1} (\pi_{x}(z))$ is the unique intersection point of $\wt{W}^{cs}(z)$ and $\wt{W}^{u}(y)$. 

Since the Anosov splitting of $g^{t}$ is $C^{1}$, the map $\pi_{y}^{-1} \circ \pi_{x}$ is $C^{1}$ and the derivative is given by parallel transport with respect to the Kanai connection $\nabla$, which coincides with the global center stable holonomy map $h^{cs}$ for $Dg^{t}|E^{u}$ by the uniqueness statement in Proposition \ref{existuhol}. Since $\wt{V}$ is invariant under the action of $g^{t}$ and stable holonomy, $\wt{V}$ is invariant under center stable holonomy and therefore $D(\pi_{y}^{-1} \circ \pi_{x})(\wt{V}) = \wt{V}$. Since $\wt{V}$ is uniquely integrable, this implies for every $z \in \widetilde{W}^{u}(x)$ that $\pi_{y}^{-1}(\pi_{x}(\wt{\mathcal{V}}(z))) = \wt{\mathcal{V}}(\pi_{y}^{-1}(\pi_{x}(z))$. 

The homeomorphisms $\{\pi_{x}: \; x \in S\wt{M}\}$ form a system of charts for $\p \wt{M}$ which give $\p \wt{M}$ the structure of a $C^{1}$ manifold. The equivariance property of the foliation $\wt{\mathcal{V}}$ with respect to these charts implies that $\wt{\mathcal{V}}$ descends to a $C^{1}$ foliation $\mathcal{F}$ of $\p \wt{M}$. Furthermore, since $\wt{\mathcal{V}}$ is equivariant under the action of $\pi_{1}(M)$ (as it was lifted from a foliation $\mathcal{V}$ on $SM$), the foliation $\mathcal{F}$ is invariant under the action of $\pi_{1}(M)$ on $\p \wt{M}$. But every $\pi_{1}(M)$-invariant continuous foliation of $\p \wt{M}$ must be trivial, i.e., either for every $\xi \in \p \wt{M}$ we have $\mathcal{F}(\xi) = \{\xi\}$ or for every $\xi \in \p \wt{M}$ we have $\mathcal{F}(\xi) = \p \wt{M}$. This is proved in Section 4 of \cite{Ham1}; see also \cite{F}. This implies that $V = \{0\}$ or $V = E^{u}$, which completes the proof. 
\end{proof}

\begin{proof}[Proof of Theorem \ref{perhyp}] Since $g^{t}$ is a contact Anosov flow preserving the contact form $\theta$ with $\ker \theta = E^{u} \oplus E^{s}$ and $d\theta|\ker \theta$ being nondegenerate, the hypotheses of Theorem \ref{perhyp} imply that for any periodic point $p$, the eigenvalues of $Dg^{\l(p)}_{p}: E^{s}_{p} \rightarrow E^{s}_{p}$ are all equal in absolute value, and their common absolute value is the reciprocal of the absolute value of the eigenvalues of $Dg^{\l(p)}_{p}: E^{u}_{p} \rightarrow E^{u}_{p}$. As a consequence, for any periodic point $p$, $g^{t}$ is $\alpha$-bunched along the orbit of $p$ for any $\alpha < 2$.  The main result of Hasselblatt in \cite{Has1} then implies that $g^{t}$ is 1-bunched, so that the Anosov splitting of $g^{t}$ is $C^{1}$.  

Theorem \ref{perapprox} implies that for every ergodic $g^{t}$-invariant measure $\mu$, $\la_{+}(Dg^{t}|E^{u},\mu) = \la_{-}(Dg^{t}|E^{u},\mu)$. In particular, this holds when $\mu$ is the Liouville measure on $SM$, which as remarked earlier, is a fully supported ergodic invariant measure with local product measure for $g^{t}$. As remarked in Lemma \ref{hypirr}, $Dg^{t}|E^{u}$ is fiber bunched and so we can apply Lemma \ref{amen}: there is a finite cover $S\widetilde{M}$ of $SM$ for which the conclusions of Lemma \ref{amen} are satisfied. Since any lift of $g^{t}$ to a finite cover of $SM$ is itself the geodesic flow of a closed negatively curved manifold $\mathcal{M}$, we see that by Lemma \ref{hypirr}, the invariant flag constructed in Lemma \ref{amen} must be trivial, and thus by Lemma \ref{confcover} there must be a continuous holonomy invariant conformal structure on $E^{u}$ preserved by $Dg^{t}$. By Theorem 1 of \cite{Kan1}, if $Dg^{t}|E^{u}$ preserves a continuous conformal structure then $M$ is homotopy equivalent to a real hyperbolic manifold $N$ and there is a $C^{1}$ time-preserving conjugacy of the geodesic flow of $M$ to the geodesic flow of $N$. The minimal entropy rigidity theorem from \cite{BCG1} then implies that $M$ is homothetic to $N$. 
\end{proof}

\begin{proof}[Proof of Theorem \ref{hyp}] Hasselblatt \cite{Has2} proved that if the sectional curvatures of $M$ are relatively $1/4$-pinched, then $g^{t}$ is $1$-bunched and so the Anosov splitting of $g^{t}$ is $C^{1}$. The hypothesis that $\la_{+}(Dg^{t}|E^{u},\mu) = \la_{-}(Dg^{t}|E^{u},\mu)$ for the measure $\mu$ together with Lemma \ref{hypirr} then implies that $Dg^{t}$ preserves a conformal structure on $E^{u}$. The proof then concludes in the same manner as the proof of Theorem \ref{perhyp} above. 
\end{proof}

\begin{rem}\label{error}In this remark we explain the gap in \cite{Yue} mentioned in the introduction. First we recall the setting of the paper. The claim is that if $M$ is a closed $m$-dimensional negatively curved manifold and $Dg^{t}|E^{u}$ is measurably irreducible in the sense that there are no $Dg^{t}$-invariant measurable subbundles of $E^{u}$, then $Dg^{t}|E^{u}$ preserves a continuous conformal structure and therefore $M$ is homothetic to a real hyperbolic manifold by the same proof as given in Theorem \ref{perhyp}. In the first part of the remark we explain some flaws in the definition of boundedness for a conformal structure that is given in \cite{Yue}, and in the second part we explain how, even after correcting these flaws in the definition, the proof still appears to have a gap in proving boundedness at a critical step. 

As mentioned in Section \ref{contamen}, the conformal structures on $E^{u}$ can be topologized as a fiber bundle $\mathcal{C}$ over $SM$. Each fiber $\mathcal{C}_{x}$ may be identified with the nonpositively curved symmetric space $S(n):=SL(n,\R)/SO(n,\R)$, where $n = \dim E^{u} = m-1$. If we take this identification to be induced by a linear trivialization $E^{u}_{x} \rightarrow \R^{n}$, then it is unique up to an isometry of $S(n)$ and therefore $\mathcal{C}_{x}$ carries a canonical metric $\rho_{x}$ of nonpositive curvature. The bundle $\mathcal{C}$ over $SM$ has no distinguished section $SM \rightarrow \mathcal{C}$ and therefore in order to say that a conformal structure $\tau$  is ``bounded'' we thus have to compare it to a specific conformal structure $\tau_{0}: SM \rightarrow \mathcal{C}$ which we have chosen beforehand. This is handled properly in \cite{KS2}, in which a conformal structure $\tau$ is defined to be bounded if there is a \emph{continuous} conformal structure $\tau_{0}: SM \rightarrow \mathcal{C}$ and a constant $C > 0$ such that 
\[
\rho_{x}(\tau(x),\tau_{0}(x)) < C \; \text{for every $x \in SM$}.
\]
In \cite{Yue}, a measurable trivialization $E^{u} \rightarrow SM \times \R^{n}$ is fixed and a conformal structure on $E^{u}$ is then defined to be a measurable map $\tau: SM \rightarrow S(n)$. A measurable conformal structure is defined to be ``bounded'' if there is a constant $C > 0$ such that  $\rho(\tau(x),I) < C$, where $\rho$ is the nonpositively curved metric on $S(n)$ and $I$ is the image of the identity matrix in $S(n)$. In the definition of boundedness in \cite{KS2}, this corresponds to taking $\tau_{0}$ to be the section defined by pulling back the standard Euclidean metric on $\R^{n}$ via the measurable trivialization $E^{u} \rightarrow SM \rightarrow \R^{n}$. This is problematic because on page 747 of \cite{Yue} it is claimed that boundedness of the invariant conformal structure implies that $Dg^{t}|E^{u}$ is uniformly quasiconformal with respect to the \emph{continuous} conformal structure on $E^{u}$ defined by restricting the Riemannian metric on $T(SM)$ to $E^{u}$. But $\tau$ being a bounded distance from the measurable section $\tau_{0} \equiv I$ does not imply it is a bounded distance from any continuous conformal structure. 

Even after repairing this issue with the definition of boundedness, there is still an apparent gap in the argument which occurs on page 747 of \cite{Yue}. At this point measurable $g^{t}$-invariant affine connections $D^{s}$ and $D^{u}$ along the $W^{s}$ and $W^{u}$ foliations respectively have been constructed which are continuous when restricted to an individual $W^{s}$ and $W^{u}$ leaf respectively, but are only measurable in the transverse direction. We let $\mu$ denote the Liouville measure on $SM$. For $y \in W^{u}(x)$ we let $P_{xy}^{u}: E^{u}_{x} \rightarrow E^{u}_{y}$ be the parallel transport map with respect to $D^{u}$, and let $P_{xy}^{s}:E^{u}_{x} \rightarrow E^{u}_{z}$ be the analogous parallel transport map for $D^{s}$ with $z \in W^{s}(x)$ instead. A measurable conformal structure $\sigma^{u}$ has also been constructed on the unstable bundle $E^{u}$ which is $\mu$-a.e. parallel with respect to $D^{s}$ and $D^{u}$ in the following sense: for $\mu$-a.e. pair $x,y \in SM$ with $y \in W^{u}(x)$, there is a constant $\xi^{u}(x,y)$ such that for every $v,w \in E^{u}_{x}$
\[
\xi^{u}(x,y)\sigma^{u}(v,w) = \sigma^{s}(P^{s}_{xy}(v),P^{s}_{xy}(w))
\]
An analogous statement is true for parallel transport of $\sigma^{u}$ with respect to $P^{u}_{xy}$. It is claimed that this data implies that $\sigma^{u}$ is ``locally essentially bounded'' which as we've seen must be interpreted to mean that for each $p \in SM$, there is a neighborhood $U$ of $p$, a constant $C > 0$, and a continuous section $\tau_{0}: U \rightarrow \mathcal{C}|_{U}$ such that 
\[
\rho_{x}(\sigma^{u}(x),\tau_{0}(x)) < C \; \text{for $\mu$-a.e. $x \in U$}.
\]
The invariance of $\sigma^{u}$ under $D^{u}$ and $D^{s}$ together with the fact that $P^{u}$ and $P^{s}$ induce isometries between the fibers of $\mathcal{C}$ gives, for $y \in W^{u}(x)$, $z \in W^{s}(x)$,
\[
\rho_{y}(\sigma^{u}(y),(P^{u}_{yx})^{*}\tau_{0}(x)) = \rho_{x}(\sigma^{u}(x),\tau_{0}(x)) = \rho_{z}(\sigma^{u}(z),(P^{s}_{zx})^{*}\tau_{0}(x)).
\]
This does not allow us to compare $\rho_{x}(\sigma^{u}(x),\tau_{0}(x))$ to $\rho_{y}(\sigma^{u}(y),\tau_{0}(y))$ unless we also have uniform bounds on $\rho_{y}(\tau_{0}(y),(P^{u}_{yx})^{*}\tau_{0}(x))$. But the parallel transport maps $P^{u}_{xy}$ and $P^{s}_{xz}$ depend only measurably on $x,y,z$ and so, for instance, $\rho_{y}(\tau_{0}(y),(P^{u}_{yx})^{*}\tau_{0}(x))$ could grow arbitrarily large as $x,y$ vary through the neighborhood $U$ of $p$. In particular, there is no reason for $P^{u}$ and $P^{s}$ to behave nicely with respect to some continuous conformal structure on $E^{u}$ over $U$. This point is not addressed in \cite{Yue} and the proof appears incomplete as a result. 


\end{rem}

\end{section}

\begin{section}{Horizontal Subbundles}\label{horsub}
In this section we assume that $g^{t}: X \rightarrow X$ is an Anosov flow defined on a Riemannian manifold $X$ and that there exists a dominated splitting $E^{u} = H^{u} \oplus V^{u}$ of the unstable bundle for which $V^{u}$ is the most expanding bundle. We will refer to  $H^{u}$ as the \emph{horizontal} unstable bundle and $V^{u}$ as the \emph{vertical} unstable bundle. 

\begin{prop}\label{smoothvert}
$V^{u}$ is uniquely integrable with smooth leaves. The resulting foliation $W^{vu}$ is smooth when restricted to $W^{u}$ leaves. 
\end{prop}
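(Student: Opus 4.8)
The plan is to recognize $V^u$ as the strong unstable subbundle of a dominated splitting in which it is the uniformly expanding extreme, apply the Hirsch--Pugh--Shub theory of invariant manifolds to get unique integrability and $C^\infty$ leaves, and then run a $C^r$-section theorem argument \emph{inside} a single unstable leaf to upgrade the leafwise regularity. First I would check that $TX = (E^s \oplus E^c \oplus H^u)\oplus V^u$ is a dominated splitting whose second summand is uniformly expanding: since $V^u \subseteq E^u$ we have $\|Dg^{-t}|V^u\| \le C\nu^t$, and domination of the complement by $V^u$ is inherited term by term --- $V^u$ dominates $E^s$ because $E^s$ is contracted, it dominates $E^c$ because $E^c$ grows subexponentially, and it dominates $H^u$ by hypothesis; since the summands meet at angles bounded away from $0$ on the compact manifold $X$, these combine into a single domination inequality $\|Dg^t|(E^s\oplus E^c\oplus H^u)\| \le C\lambda^t\, m(Dg^t|V^u)$ with $\lambda<1$, where $m$ denotes the conorm. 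Thus $V^u$ is a strong unstable bundle, so the invariant manifold theorem applies: through each $p$ there is a unique locally invariant $C^\infty$ manifold $W^{vu}_{\mathrm{loc}}(p)$ tangent to $V^u$, realized as a graph over $V^u_p$, and these fit together into a foliation $W^{vu}$. Because $V^u\subseteq E^u$ and $E^u$ is uniquely integrable with leaves $W^u$, every integral manifold of $V^u$ is tangent to $E^u$ everywhere and hence contained in one $W^u$ leaf, so $W^{vu}$ subfoliates $W^u$ and each of its leaves is a $C^\infty$ submanifold of a $W^u$ leaf.

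It remains to prove that $W^{vu}$ restricted to a fixed unstable leaf $L = W^u(x)$ is a $C^\infty$ foliation of $L$. The leaf $L$ is a $C^\infty$ immersed submanifold on which $g^t$ restricts to $C^\infty$ diffeomorphisms $g^t|_L\colon L\to W^u(g^t x)$ that expand $TL = E^u|_L$ uniformly, so $(g^t|_L)^{-1}$ is a uniform contraction with $\mathrm{Lip}\big((g^t|_L)^{-1}\big)\le C\nu^t$. With $k=\dim V^u$, the derivative cocycle induces maps of the Grassmann bundles $\mathrm{Gr}_k(TW^u)$ covering $g^t$, and $V^u$ is a continuous invariant section over $L$; the domination inequality above says precisely that this section is attracting for the forward dynamics, with fiberwise contraction rate bounded by a constant times $\lambda^t$. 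I would then invoke the $C^r$-section theorem: since the base maps $(g^t|_L)^{-1}$ contract uniformly, the quantity $(\text{fiber contraction})\cdot\mathrm{Lip}\big((g^t|_L)^{-1}\big)^r \le C\lambda^t\nu^{rt}$ tends to $0$ for \emph{every} $r$, so the invariant section $V^u$ is $C^\infty$ along $L$. A $C^\infty$ subbundle of $TL$ that is integrable, hence involutive, integrates to a $C^\infty$ foliation of $L$ by the Frobenius theorem, and this foliation is exactly $W^{vu}|_L$.

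The step I expect to require the most care is the leafwise regularity. Applying the $C^r$-section theorem to $V^u$ as a subbundle of $TX$ directly would only yield finitely many derivatives, since the bunching constant is then constrained by the weak contracting and neutral directions of $g^t$; the essential point is to carry out the argument within an unstable leaf, where the ambient dynamics is uniformly expanding, so the inverse base map is a genuine contraction and the usual bunching obstruction to $C^\infty$ regularity disappears. A secondary technical nuisance is that $g^t$ permutes the $W^u$ leaves rather than preserving any single one, but this is harmless: the graph-transform proof of the section theorem only needs a sequence of fiber contractions over (not necessarily self-) base maps converging to the invariant section, which one sets up either in the universal cover $\widetilde X$, where $\widetilde W^u$ is a genuine $g^t$-permuted foliation, or locally inside flow boxes.
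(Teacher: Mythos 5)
Your proposal is correct and follows essentially the same route as the paper: you identify $V^{u}$ as the strong unstable bundle of a partially hyperbolic (dominated) splitting $TX=(E^{s}\oplus E^{c}\oplus H^{u})\oplus V^{u}$ and invoke the Hirsch--Pugh--Shub invariant manifold theory, which is exactly how the paper argues. The only difference is that you spell out the leafwise $C^{\infty}$ regularity via the $C^{r}$-section theorem applied inside unstable leaves, a step the paper simply cites as a standard consequence of \cite{HPS}.
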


\begin{proof}
Consider $f:=g^{1}$ as a partially hyperbolic map with invariant splitting $E^{u}_{f} \oplus E^{c}_{f} \oplus E^{s}_{f}$, where $E^{u}_{f} = V^{u}$, $E^{c}_{f} = H^{u} \oplus E^{c}$, and $E^{s}_{f} = E^{s}$. The statements of the proposition then follow from standard results in the theory of partially hyperbolic diffeomorphisms\cite{HPS}. 
\end{proof}

For each $p \in X$, we define an equivalence relation $\sim$ on points $x,y \in W^{u}(p)$ by $x \sim y$ if $x \in W^{vu}(y)$. We let $Q^{u}(p)$ be the quotient of $W^{u}(p)$ by this equivalence relation, which can be identified with the space of $W^{vu}$ leaves inside of $W^{u}(p)$, and we let $\Pi:W^{u} \rightarrow Q^{u}$ be the projection map. The next proposition verifies that the leaves of the $W^{vu}$ foliation are properly embedded in $W^{u}$, which implies that $Q^{u}(p)$ is a smooth manifold diffeomorphic to $\R^{k}$, $k = \dim H^{u}$. 

\begin{prop}\label{embed}
For each $p \in X$, there is a smooth embedding $\iota_{p}: \R^{k} \rightarrow W^{u}(p)$ with $\iota_{p}(0) = p$ such that $\iota_{p}(\R^{k})$ meets each $W^{vu}$ leaf inside of $W^{u}(p)$ in exactly one point. 
\end{prop}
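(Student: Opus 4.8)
The plan is to produce $\iota_p$ as a global section of the quotient map $\Pi : W^u(p) \to Q^u(p)$, i.e.\ a smooth submanifold of $W^u(p)$ transverse to the vertical foliation $W^{vu}$ that meets each $W^{vu}$-leaf exactly once. The natural candidate is the strong unstable leaf of the partially hyperbolic diffeomorphism $f = g^1$ considered in Proposition~\ref{smoothvert}: recall there $E^u_f = V^u$ and $E^{cu}_f = H^u \oplus E^c$, so that the weak-unstable foliation $W^{cu}_f$ of $f$ exists and is subfoliated by $W^{vu}$. However, what I actually want is an integral manifold of $H^u$ \emph{inside} $W^u(p)$. The bundle $H^u$ need not be integrable, so I cannot simply invoke Frobenius; instead I should build $\iota_p$ by transporting a single point across the vertical foliation in a controlled way.

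Concretely, first I would fix $p$ and work with the smooth manifold $W^u(p)$, on which $W^{vu}$ is a smooth foliation with leaves diffeomorphic to $\R^{\dim V^u}$ (Proposition~\ref{smoothvert}). Choose any smooth $k$-disk $D \subset W^u(p)$ through $p$ transverse to $W^{vu}$ near $p$ (e.g.\ tangent to $H^u_p$); this gives the local picture of $Q^u(p)$ as a $k$-manifold and $\Pi$ as a smooth submersion near $p$. The key step is to show $\Pi$ is a \emph{proper} submersion with contractible ($\R^{\dim V^u}$) fibers, hence a trivial fiber bundle, and that $Q^u(p) \cong \R^k$. Properness of the $W^{vu}$-leaves as embedded submanifolds of $W^u(p)$ is what makes $Q^u(p)$ Hausdorff and a genuine manifold; this should follow from the contraction properties of the dominated splitting: under $g^{-t}$ the bundle $V^u$ is contracted \emph{faster} than $H^u$, so a vertical leaf that returns close to itself in $W^u(p)$ can be pushed backward under the flow to contradict the local product structure of $W^u$ near a single point (here one uses that $W^u$ itself is a properly embedded copy of $\R^{m-1}$ in $\widetilde{SM}$, and that $W^{vu}$ is exactly the $f$-strong-unstable foliation, whose leaves are properly embedded by \cite{HPS}). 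Once $\Pi : W^u(p) \to Q^u(p)$ is a fiber bundle with fiber $\R^{\dim V^u}$ over a base that is an open $k$-manifold, a dimension/exhaustion argument (or: $W^u(p) \cong \R^{m-1}$ is simply connected and the fiber is connected, so the base is simply connected, and an ascending union of Euclidean charts forces $Q^u(p)\cong\R^k$) gives the base. Then the bundle is trivial, and any global section $\sigma : Q^u(p) \to W^u(p)$ with $\sigma(\Pi(p)) = p$ can be smoothed to yield $\iota_p := \sigma$ composed with a diffeomorphism $\R^k \cong Q^u(p)$; it is a smooth embedding meeting each vertical leaf once by construction.

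I expect the main obstacle to be the properness claim: showing that the $W^{vu}$-leaves do not accumulate on themselves or on each other within a fixed unstable leaf $W^u(p)$, so that $Q^u(p)$ is genuinely a manifold rather than a non-Hausdorff quotient. The cleanest route is probably to lift everything to the universal cover $S\widetilde M$, where $\widetilde W^u(p)$ has global product structure and $W^{vu}$ coincides with the strong unstable foliation of $\widetilde g^1$; then proper embeddedness of strong unstable leaves in the full manifold restricts to proper embeddedness inside the weak unstable leaf $\widetilde W^u(p) = \widetilde W^{cu}_f(p)$, which is a standard fact for partially hyperbolic systems \cite{HPS}. Everything else — smoothness of $\iota_p$, the count of intersection points, the identification with $\R^k$ — is then formal, using Proposition~\ref{smoothvert} for the smoothness of the foliation along $W^u$ leaves and the smooth submersion theorem.
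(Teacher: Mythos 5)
You have correctly located the crux --- whether the vertical foliation of $W^{u}(p)$ admits a global transversal, equivalently whether $Q^{u}(p)$ is a genuine manifold --- but the route you propose does not close. First, the claim that strong unstable leaves of $f=g^{1}$ are properly embedded in the full manifold is false: like unstable leaves of any Anosov system they are typically dense in the ambient manifold, so there is nothing to ``restrict'' to $W^{u}(p)$; what \cite{HPS} provides is injectively immersed leaves with uniform local product structure, not proper embeddedness. Second, and more fundamentally, even proper embeddedness of each individual $W^{vu}$-leaf inside $W^{u}(p)$ would not give what you need: a foliation of $\R^{m-1}$ by properly embedded planes can have a non-Hausdorff leaf space (already foliations of $\R^{2}$ by properly embedded lines do), so ``each leaf is proper'' does not yield that $\Pi$ is a fibration, nor that $Q^{u}(p)$ is a manifold, nor that a global section exists. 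Third, $\Pi$ is never a proper map --- its fibers are copies of $\R^{\dim V^{u}}$ --- so the Ehresmann-type step ``proper submersion with contractible fibers, hence trivial bundle'' is unavailable; establishing the fibration property is essentially equivalent to the statement being proved, so the argument becomes circular exactly at the point you flagged as the main obstacle.

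What closes the gap in the paper is a direct dynamical construction of the transversal, and its two essential inputs are absent from your sketch: (i) backward recurrence of $p$, i.e.\ a sequence $t_{n}\to\infty$ with $g^{-t_{n}}(p)\to p$, which guarantees that every point of $W^{u}(p)$ eventually enters a fixed-size disk $D_{n,r}$ around $g^{-t_{n}}(p)$ on which the $W^{vu}$-foliation is trivial; and (ii) the domination of $E^{u}=H^{u}\oplus V^{u}$, which forces the tangent planes of the forward images $g^{t_{n}}(K_{n})$ of compact local transversals $K_{n}\subset D_{n,r}$ to align with $H^{u}$, and this is precisely what rules out such an image meeting a single vertical leaf twice (your sketch has no mechanism playing this role). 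With these two facts the images $g^{t_{n}}(K_{n})$ are glued inductively along the smooth, locally trivial vertical foliation into an increasing union of $k$-dimensional submanifolds meeting every leaf exactly once, and that union is the desired properly embedded copy of $\R^{k}$. If you wish to keep your quotient-manifold formulation, note that the logical order must be reversed: it is the exhibited transversal that proves $Q^{u}(p)$ is a Hausdorff manifold diffeomorphic to $\R^{k}$, not the other way around.
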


\begin{proof}
Let $f = g^{1}$ and consider this as a partially hyperbolic map as in Proposition \ref{smoothvert}. The theory of partially hyperbolic diffeomorphisms then tells us that there is some $r > 0$ such that on any ball of radius $r$ in $SM$, the foliation tangent to $E^{u}_{f}$ is trivial \cite{HPS}. Furthermore, since there is a foliation tangent to $E^{u}_{f} \oplus E^{c}_{f}$, and the unstable foliation $W^{u}_{f}$ tangent to $E^{u}_{f}$ always smoothly subfoliates $E^{u}_{f} \oplus E^{c}_{f}$, we can choose this trivialization to be smooth along $W^{u}$ leaves. Choose a sequence of times $t_{n} \rightarrow \infty$ such that $g^{-t_{n}}(p) \rightarrow p$ in $X$. For each $n \in \N$, let $D_{n,r}$ be the disk of radius $r$ centered at $g^{-t_{n}}(p)$ in $W^{u}(g^{-t_{n}(p)})$. 

By shrinking $r$ if necessary, we can assume that $g^{-t}$ is a contracting map on $D_{n,r}$ for each $n$ in the induced Riemannian metric on $W^{u}$, which implies that $g^{t_{n}-t_{s}}(D_{n,r}) \subset D_{s,r}$ for $s > n$. For each $n$, choose a compact transversal submanifold $K_{n} \subset D_{n,r}$ to the $W^{vu}$ foliation which contains $g^{-t_{n}}(p)$ and is tangent to $H^{u}_{g^{-t_{n}}(p)}$ at $g^{-t_{n}}(p)$. $K_{n}$ meets each leaf of the induced foliation of $D_{n,r}$ by $W^{vu}$ in exactly one point. 

Consider the collection of $k$-dimensional submanifolds $g^{t_{n}}(K_{n})$ of $W^{u}(p)$. We make three claims. First we claim that if a $W^{vu}$ leaf intersects $g^{t_{n}}(K_{n})$, then it intersects $g^{t_{s}}(K_{s})$ for any $s > n$. Second, we claim that each $W^{vu}$ leaf meets each submanifold $g^{t_{n}}(K_{n})$ in at most one point. Lastly, we claim that for each $W^{vu}$ leaf in $W^{u}(p)$, there is an $n \in \N$ such that $g^{t_{n}}(K_{n})$ intersects this leaf. 

For the first claim, if $s > n$, then $g^{-t_{s}}(g^{t_{n}}(K_{n})) \subset D_{s,r}$ by construction. Since $K_{s}$ is a full transversal inside of $D_{s,r}$, any $W^{vu}$ leaf intersecting $g^{-t_{s}}(g^{t_{n}}(K_{n}))$ also intersects $K_{s}$. By $g^{t}$-invariance of the $W^{vu}$ foliation, any $W^{vu}$ leaf intersecting $g^{t_{n}}(K_{n})$ thus also intersects $g^{t_{s}}(K_{s})$.

For the second claim, suppose that $W^{vu}(q)$ intersects $g^{t_{n}}(K_{n})$ in the points $q$ and $q'$, for $q \neq q'$. $W^{u}(p)$ is exponentially contracted under $g^{-t}$, so for large enough $s$, there will be a curve contained entirely in $g^{-t_{s}}(W^{vu}(q)) \cap D_{s,r}$ which joins $g^{-t_{s}}(q)$ to $g^{-t_{s}}(q')$. On the other hand, since the splitting $E^{u} =V^{u} \oplus H^{u}$ is dominated, as $s \rightarrow \infty$, the tangent spaces to $g^{t_{n}-t_{s}}(K_{n})$ are uniformly asymptotic to the sequence of planes $H^{u}_{g^{t_{s}}(p)}$. Thus for large enough $s$, $g^{t_{n}-t_{s}}(K_{n})$ will be a small disk that is almost parallel to $H^{u}_{g^{t_{s}}(p)}$; in particular it will meet each leaf of $W^{vu} \cap D_{s,r}$ in at most one point. But this contradicts the existence of the segment joining $g^{-t_{s}}(q)$ to $g^{-t_{s}}(q')$ inside of $g^{-t_{s}}(W^{vu}(q)) \cap D_{s,r}$. 

For the last claim, recall that $W^{u}(p)$ is defined as the set of points in $X$ asymptotic to the orbit of $p$ under $g^{-t}$. Since $g^{-t_{n}}(p) \rightarrow p$, it follows that for any $q \in W^{u}(p)$, there is some $n > 0$ such that $g^{-t_{n}}(q) \in D_{n,r}$; the last claim follows. 

Having proven those three claims, we now construct the desired embedding inductively. Set $U_{1}:= g^{t_{1}}(K_{1})$. To construct $U_{n}$ from $U_{n-1}$, take the submanifold $g^{t_{n}}(K_{n})$ of $W^{u}(p)$ and use the smoothess of the $W^{vu}$ foliation of $W^{u}$ to map $g^{t_{n}}(K_{n})$ smoothly onto a submanifold of $W^{u}(p)$ which contains $q \in U_{n-1}$ for each $q$ such that $W^{vu}(q) \cap g^{t_{n}}(K_{n})$ is nonempty. By the first claim $U_{n} \subset U_{s}$ for $s \geq n$. By the second and third claim, the submanifold $U:= \bigcup_{n=1}^{\infty}U_{n}$ meets each $W^{vu}$ leaf in exactly one point. Properness of the embedding follows from the fact that the $W^{vu}$ foliation is locally trivial and that $U$ meets each $W^{vu}$ leaf in only one point.  
\end{proof}
Next we build a $C^{1}$ $g^{t}$-invariant connection $\nabla$ on the tangent bundle $TQ^{u}$ to $Q^{u}$ which will correspond to a $g^{t}$-invariant connection on the bundle $E^{u}/V^{u}$ over $SM$. $\nabla$ will play the same role in the proof of Lemma \ref{chypirr} below as the Kanai connection in the proof of Lemma \ref{hypirr}. 
\begin{lem}\label{connect}
Suppose that $Dg^{t}|H^{u}$ is fiber bunched. Then there is a $C^{1}$, flat, torsion-free $g^{t}$-invariant connection $\nabla$ on $Q^{u}$. For points $p$, $q \in W^{u}(p)$ and $w \in H^{u}_{p}$, 
\[
D\Pi_{q}^{-1}\circ P_{\Pi_{v}(p)\Pi_{v}(q)} \circ D\Pi_{p} = h^{u}_{pq}(w)
\]
where $P$ is parallel transport with respect to $\nabla$. 
\end{lem}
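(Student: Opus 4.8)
The plan is to construct $\nabla$ by pushing forward the flat connection structure that the fiber-bunched cocycle $Dg^t|H^u$ already carries via its holonomies. Recall that $\Pi: W^u(p) \to Q^u(p)$ is a submersion with $\ker D\Pi_q = V^u_q$, so for $q \in W^u(p)$ the derivative $D\Pi_q$ restricts to a linear isomorphism $H^u_q \to T_{\Pi(q)}Q^u(p)$. Using Proposition \ref{smoothvert} and Proposition \ref{embed}, $Q^u(p)$ is a smooth manifold diffeomorphic to $\R^k$ and $\Pi$ is smooth along $W^u$ leaves. Since $Dg^t|H^u$ is fiber bunched, Proposition \ref{existuhol} gives the unstable holonomy $h^u$; I would first verify that $h^u$ descends to $H^u/$ nothing is needed — rather, the key identity to establish is that transporting a vector $w \in H^u_p$ to $h^u_{pq}(w) \in H^u_q$ and then applying $D\Pi_q$ gives a well-defined notion of parallel section of $TQ^u(p)$. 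Concretely, define, for the leaf $W^u(p)$ identified with $Q^u(p)$ via the embedding $\iota_p$ of Proposition \ref{embed}, the connection $\nabla$ by declaring the local frames $X_i(q) := D\Pi_q(h^u_{pq}(e_i))$ (for $e_i$ a basis of $H^u_p$) to be parallel. This is consistent because $h^u$ satisfies the cocycle identity $h^u_{qz}\circ h^u_{pq} = h^u_{pz}$ from Definition \ref{defuhol}(2), so the frame is independent of the choice of basepoint $p$ up to a constant linear change, and $\nabla$ is therefore globally well-defined on each $Q^u(p)$.

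The regularity and flatness then follow formally. Flatness is immediate: $\nabla$ has a global parallel frame on each $Q^u(p)$ by construction, hence zero curvature. For the $C^1$ claim, I would argue that $h^u_{pq}$ depends on $q$ in a $C^1$ (in fact $\beta$-Hölder-derivative) manner along $W^u(p)$ — this is the part of the argument that uses the fiber-bunching hypothesis most essentially, via the estimate \eqref{uhol} of Proposition \ref{existuhol} together with the smoothness of $Dg^t$ along unstable leaves — and that $D\Pi$ is smooth along $W^u$ by Proposition \ref{smoothvert}; composing gives that the frame $X_i$ is $C^1$ on $Q^u(p)$, so $\nabla$ is $C^1$. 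Torsion-freeness: I would compute the torsion in the parallel frame, $T(X_i,X_j) = \nabla_{X_i}X_j - \nabla_{X_j}X_i - [X_i,X_j] = -[X_i,X_j]$, and show the bracket vanishes. The vanishing of $[X_i,X_j]$ should come from the fact that $H^u$ is the horizontal distribution of a dominated splitting transverse to the integrable $V^u$, combined with the holonomy-invariance: essentially $D\Pi$ intertwines the $W^u$-geometry with $Q^u$ and the $h^u$-transported frame projects to commuting vector fields because the pushforward of $H^u$ under $\Pi$ is unambiguous. Alternatively — and this may be cleaner — one defines $\nabla$ as the unique torsion-free connection with the prescribed parallel frame only after checking the frame is holonomic, i.e. $X_i = \partial/\partial x_i$ in suitable coordinates; I would look for coordinates on $Q^u(p)$ coming from $\Pi$ composed with horospherical-type charts in which $h^u$ becomes the identity.

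$g^t$-invariance is the content of Definition \ref{defuhol}(3): since $h^u_{g^t p,\, g^t q} = A^t_q \circ h^u_{pq} \circ (A^t_p)^{-1}$ where $A^t = Dg^t|H^u$, and $g^t$ maps $W^u(p)$ to $W^u(g^t p)$ covering a diffeomorphism $Q^u(p) \to Q^u(g^t p)$ compatible with $\Pi$, the parallel frames are carried to parallel frames, so $g^t_* \nabla = \nabla$. Finally, the displayed formula
\[
D\Pi_q^{-1}\circ P_{\Pi(p)\Pi(q)} \circ D\Pi_p = h^u_{pq}|_{H^u_p}
\]
is then essentially the definition unwound: $P_{\Pi(p)\Pi(q)}$ sends the parallel frame at $\Pi(p)$ to the parallel frame at $\Pi(q)$, i.e. $D\Pi_p(e_i) \mapsto D\Pi_q(h^u_{pq}(e_i))$, which is exactly the asserted identity after applying $D\Pi_q^{-1}$.

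The main obstacle I anticipate is the torsion-free (equivalently, holonomy-of-the-frame) step: one must genuinely use that $V^u$ is \emph{uniquely integrable} (Proposition \ref{smoothvert}) and that the splitting is dominated in order to know that the $h^u$-transported horizontal frame pushed forward by $D\Pi$ yields coordinate vector fields rather than merely a parallel frame of some connection with torsion. Showing $[X_i,X_j]=0$ is not purely formal — it requires relating the Lie bracket of projected horizontal fields to the failure of $H^u$ to be integrable, and arguing that holonomy-invariance forces this failure to be "absorbed" into the $V^u$ direction, which is killed by $D\Pi$. This is the analogue of the role played by the vanishing of $\theta$ on $W^u$ in Lemma \ref{hypirr}, and I expect it to require the careful local analysis of $\Pi$ along $W^u$ leaves sketched above.
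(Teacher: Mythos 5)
Your overall strategy (declare the $D\Pi$-pushforward of an $h^{u}$-transported frame to be parallel, get flatness from the global parallel frame, $g^{t}$-invariance from the equivariance property of $h^{u}$, and read off the displayed formula as the definition of parallel transport) is the same as the paper's, but two steps that you leave open are genuine gaps, and the routes you sketch for them would not close them. The most serious is torsion-freeness. With your construction, torsion-freeness is equivalent to $[X_{i},X_{j}]=0$ for the parallel frame, and there is no formal reason for this: lifting $X_{i}$ to $h^{u}$-parallel sections of $H^{u}$, the bracket of the lifts has both a $V^{u}$-component and a component in $H^{u}$ modulo $V^{u}$; $D\Pi$ kills only the former, and it is exactly the latter that computes $[X_{i},X_{j}]$ downstairs. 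Unique integrability of $V^{u}$ and domination say nothing about it, and the idea that the non-integrability of $H^{u}$ gets ``absorbed'' into $V^{u}$ cannot be the mechanism --- indeed later in the paper (Lemma \ref{nondeg}) $H^{u}$ is shown to be maximally non-integrable, and moreover the lifted fields are a priori only H\"older along the leaf, so the upstairs bracket is not even classically defined. The paper's argument is entirely different and dynamical: the torsion $T$ is a $g^{t}$-invariant tensor of type $(2,1)$ on $TQ^{u}$, and fiber bunching of $Dg^{t}|H^{u}$ forces the induced action of $Dg^{t}$ on $(2,1)$-tensors over $TQ^{u}$ to be an exponential contraction, so the invariant tensor $T$ must vanish identically. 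This contraction argument is the missing idea; it is not an analogue of the vanishing of $\theta$ on $W^{u}$ used for the Kanai connection in Lemma \ref{hypirr}.

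The second gap is the well-definedness and regularity of your frame. Setting $X_{i}(\Pi(q)):=D\Pi_{q}(h^{u}_{pq}(e_{i}))$ requires that for $q'\in W^{vu}(q)$ one has $D\Pi_{q'}\circ h^{u}_{pq'}=D\Pi_{q}\circ h^{u}_{pq}$ on $H^{u}_{p}$; equivalently, the derivative of the $W^{vu}$-holonomy between two transversals must coincide with the unstable holonomy. You name this as ``the key identity to establish'' but never establish it, and retreating to the single transversal $\iota_{p}(\R^{k})$ of Proposition \ref{embed} does not give the displayed identity at an arbitrary $q\in W^{u}(p)$, nor independence of the construction from the transversal. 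The paper proves exactly this point by passing to the quotient bundle $E^{u}/V^{u}$, which is smooth along $W^{u}$ leaves because $V^{u}$ is (Proposition \ref{smoothvert}), and invoking the uniqueness clause of Proposition \ref{existuhol}: the derivative of the transition map $(\Pi|_{K_{2}})^{-1}\circ\Pi|_{K_{1}}$ is an unstable holonomy for the induced cocycle on $E^{u}/V^{u}$, hence equals $\bar{h}^{u}$. Relatedly, your justification of the $C^{1}$ claim --- that $h^{u}_{pq}$ is $C^{1}$ in $q$ via the estimate \eqref{uhol} and smoothness of $Dg^{t}$ --- does not work: \eqref{uhol} is only a H\"older bound, and in the generality of this lemma $H^{u}$ itself is merely H\"older along $W^{u}$ leaves (leafwise $C^{1}$ regularity of $H^{u}$ is an additional hypothesis imposed only in Theorem \ref{regchyp}), so the holonomy on $H^{u}$ cannot be expected to be $C^{1}$. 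It is the induced holonomy on the smooth bundle $E^{u}/V^{u}$ --- in effect the composition with $D\Pi$ --- that is $C^{1}$ along leaves, because there the cocycle is smooth along leaves and fiber bunched; this is how the paper obtains both the regularity of the frame and the $C^{1}$ Ehresmann connection defining $\nabla$.
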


\begin{proof}
Since $V^{u}$ is a smooth subbundle of $E^{u}$ when restricted to $W^{u}$, the quotient bundle $E^{u}/V^{u}$ over $W^{u}$ is smooth. The projection $E^{u} \rightarrow E^{u}/V^{u}$ induces a bundle isomorphism $H^{u} \rightarrow 
E^{u}/V^{u}$ which is equivariant with respect to the action of $Dg^{t}$ on $H^{u}$ and the induced action of $Dg^{t}$ on $E^{u}/V^{u}$. We push forward the Riemannian metric on $H^{u}$ to a Riemannian metric on $E^{u}/V^{u}$, with respect to which the induced action of $Dg^{t}$ is fiber bunched. The isomorphism $H^{u} \rightarrow E^{u}/V^{u}$ also induces an unstable holonomy $\bar{h}^{u}$ for the action of $Dg^{t}$ on $E^{u}/V^{u}$. Since $E^{u}/V^{u}$ has a smooth structure along $W^{u}$ leaves with respect to which $Dg^{t}$ is smooth and the action of $Dg^{t}$ on $E^{u}/V^{u}$ is 1-fiber bunched, the unstable holonomy $\bar{h}^{u}$ is $C^{1}$ along $W^{u}$ leaves. 

By the uniqueness of this unstable holonomy, we have the following alternative construction of $\bar{h}^{u}$. Take two compact transversals $K_{1}$ and $K_{2}$ to the $W^{vu}$ foliation which meet the same collection of $W^{vu}$ leaves (or equivalently, they have the same projection to $Q^{u}(p)$). The projection $E^{u} \rightarrow E^{u}/V^{u}$ induces natural bundle isomorphisms $TK_{i} \rightarrow E^{u}/V^{u}$ over each of these transversals. Then the derivative of the chart transition map $(\Pi|_{K_{2}})^{-1} \circ \Pi|_{K_{1}}$ is the unstable holonomy $\bar{h}^{u}$ when we make the identifiactions $TK_{i} \cong E^{u}/V^{u}$

The projection $\Pi: W^{u}(p) \rightarrow Q^{u}(p)$ is smooth and hence induces a derivative map $D\Pi: E^{u} \rightarrow TQ^{u}$ with $V^{u} = \ker D\Pi$. Hence for each $x \in W^{u}(p)$ the induced map $\overline{D\Pi}: E^{u}_{x}/V^{u}_{x} \rightarrow TQ^{u}_{\Pi(x)}$ is an isomorphism. For $w$, $z \in Q^{u}(p)$ which are the image of $x$ and $y \in W^{u}(p)$ respectively, we define $P_{wz}: TQ^{u}_{w} \rightarrow TQ^{u}_{z}$ by $P_{wz} = \overline{D\Pi}_{y} \circ \bar{h}_{xy} \circ \overline{D\Pi}_{x}^{-1}$. We claim that $P_{wz}$ does not depend on the preimages $x$ and $y$ of $w$ and $z$ which were chosen. Suppose that $x'$ and $y'$ are two other points projecting to $w$ and $z$ respectively. Then 
\begin{align*}
\overline{D\Pi}_{y'} \circ \bar{h}_{x'y'} \circ \overline{D\Pi}_{x'}^{-1} &= \overline{D\Pi}_{y} \circ (\overline{D\Pi}_{y}^{-1} \circ \overline{D\Pi}_{y'}) \circ \bar{h}_{x'y'} \circ (\overline{D\Pi}_{x'}^{-1} \circ \overline{D\Pi}_{x})  \circ \overline{D\Pi}_{x}^{-1} \\
&= \overline{D\Pi}_{y} \circ \bar{h}_{y'y} \circ \bar{h}_{x'y'} \circ \bar{h}_{xx'} \circ \overline{D\Pi}_{x}^{-1} \\
&= \overline{D\Pi}_{y} \circ \bar{h}_{xy} \circ \overline{D\Pi}_{x}^{-1} 
\end{align*}
where we have used the observation that the derivatives of the transition maps for $\Pi$ are given by the unstable holonomy $\bar{h}^{u}$, and also the properties of the unstable holonomy $\bar{h}^{u}$ itself. 

It's straightforward to check that $P_{wz}$ is equivariant with respect to the induced derivative action $\overline{Dg^{t}}: TQ^{u}(p) \rightarrow TQ^{u}(g^{t}(p))$, using the equivariance property of $\bar{h}^{u}$. $P_{wz}$ is also $C^{1}$ in the variables $w$ and $z$ and has the property that for $x,y,z \in Q^{u}(p)$, $P_{yz} \circ P_{xy} = P_{xz}$. This implies that for each $X \in TQ^{u}(p)$, 
\[
\mathcal{P}(X) = \{Y \in TQ^{u}: \; P_{xy}(X) = Y \; \text{for some $x,y \in Q^{u}(p)$}\}
\]
is a $C^{1}$ submanifold of $TQ^{u}$ which is transverse to the tangent spaces $TQ^{u}_{x}$. The tangent spaces to the foliation of $TQ^{u}$ by these subfoliations define an Ehresmann connection on $Q^{u}(p)$ which we can then use to define a connection $\nabla$ on $Q^{u}(p)$. The parallel transport of a vector by $\nabla$ is given by the linear maps $P_{wz}$. Thus $\nabla$ is a $C^{1}$ flat affine connection on $Q^{u}(p)$. Since the maps $P_{wz}$ are equivariant with respect to $\overline{Dg^{t}}$, $\nabla$ is also $g^{t}$ invariant. The definition of $P$ immediately implies the equation stated in the lemma. 

It only remains to show that $\nabla$ is torsion-free. Let $T$ be the torsion tensor of $\nabla$. $T$ is a mixed tensor of type $(2,1)$ on $TQ^{u}$ which is invariant under $g^{t}$. But the fact that $Dg^{t}|H^{u}$ is fiber bunched implies that $Dg^{t}$ acts by exponential contraction on tensors of type $(2,1)$ on $TQ^{u}$. This forces $T \equiv 0$ so that $\nabla$ is torsion-free. 
\end{proof}

The following lemma is fundamental to everything that follows in this paper. Recall that in the proof of Theorem \ref{hypirr}, one of the critical steps was to establish that the stable holonomy of the cocycle $Dg^{t}|E^{u}$ could be represented as the derivative of the holonomy map between unstable leaves induced by the center stable foliation. Lemma \ref{wcs} establishes the analogous property in our situation. 

Let $r > 0$ be small enough that all of the foliations $W^{*}$ under consideration are trivial on a ball of radius $r$. Given two points $x, y \in X$ with $y \in W^{cs}_{r}(x)$, there is then a well-defined $W^{cs}$-holonomy map $L_{xy}: W^{u}_{r}(x) \rightarrow W^{u}_{r}(y)$. For $z \in W^{u}_{r}(x)$, $L_{xy}(z)$ is defined to be the unique point in the intersection $W^{cs}_{r}(z) \cap W^{u}_{r}(y)$. In general this holonomy map is only H\"older continuous \cite{PSW}. We establish under proper fiber bunching assumptions on $Dg^{t}|H^{u}$ that $L_{xy}$ is differentiable when restricted to curves tangent to $H^{u}$. If we think of $g^{1}$ as a partially hyperbolic diffeomorphism as in Proposition \ref{smoothvert} with center bundle $H^{u} \oplus E^{c}$, Lemma \ref{wcs} can be viewed as an extension of Theorem B in \cite{PSW} to the case in which there may not be a foliation tangent to the center distribution. 

\begin{lem}\label{wcs}
Suppose that $E^{u}$ and $H^{u}$ are $\beta$-H\"older continuous and that $Dg^{t}|H^{u}$ is $\beta$-fiber bunched. Then the $W^{cs}$ holonomy map $L_{xy} : W^{u}_{r}(x) \rightarrow W^{u}_{r}(y)$ maps $C^{1}$ curves tangent to $H^{u}$ to $C^{1}$ curves tangent to $H^{u}$ and therefore $L_{xy}$ is differentiable along $H^{u}$. For $z \in W^{u}_{r}(x)$, the derivative of $L_{xy}$ along $H^{u}$ is given by 
\[
D_{z}L_{xy}|H^{u} = h^{cs}_{z L_{xy}(z)}
\]
\end{lem}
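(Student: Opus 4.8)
The plan is to reduce the statement to a linearization/holonomy-comparison argument of the same flavor as Theorem B of \cite{PSW}, but carried out one leaf at a time so that the absence of a center foliation does not obstruct us. Fix $x$ and $y \in W^{cs}_{r}(x)$. First I would set up coordinates: using Proposition \ref{embed} and the smoothness of $W^{vu}$ inside $W^{u}$ (Proposition \ref{smoothvert}), identify $W^{u}_{r}(x)$ locally with a product $Q^{u}_{\mathrm{loc}}(x) \times (\text{vertical leaf})$, and similarly for $y$; the $W^{cs}$-holonomy $L_{xy}$ should, after these identifications, be shown to respect the vertical foliation, i.e. $L_{xy}(W^{vu}(z)) \subset W^{vu}(L_{xy}(z))$. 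This is because $W^{cs}$ and $W^{vu}$ together span a subbundle ($E^{cs} \oplus V^{u}$) which, being the sum of the slow-center-stable and the fastest-unstable directions, is itself (uniquely) integrable by the standard partially hyperbolic machinery applied to $f = g^{1}$ exactly as in Proposition \ref{smoothvert}. Granting this, $L_{xy}$ descends to a map $\bar L_{xy}: Q^{u}(x) \to Q^{u}(y)$ between the quotient manifolds, and the claim ``$L_{xy}$ is differentiable along $H^{u}$ with derivative $h^{cs}$'' becomes the claim ``$\bar L_{xy}$ is $C^{1}$ with derivative the center-stable holonomy $\bar h^{cs}$ of the induced cocycle $Dg^{t}$ on $E^{u}/V^{u}$,'' where I use the notation and the $C^{1}$ unstable holonomy $\bar h^{u}$ from Lemma \ref{connect} together with its center-stable extension $\bar h^{cs}$ defined as in Section \ref{amenred}.

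Next I would prove the $C^{1}$ statement for $\bar L_{xy}$ by the classical graph-transform / $C^{1}$-section argument. Consider a $C^{1}$ curve $c$ in $W^{u}_{r}(x)$ tangent to $H^{u}$; its image under $g^{t}$ for $t \to -\infty$ is exponentially contracted in $W^{u}$, and domination $E^{u} = H^{u} \oplus V^{u}$ forces the tangent directions of $g^{-t}(c)$ to converge exponentially (rate governed by the domination constant $\la$) toward $H^{u}$. Meanwhile $L_{xy}$ is conjugated by $g^{t}$ to the $W^{cs}$-holonomy between the backward iterates $W^{u}_{r}(g^{-t}x)$ and $W^{u}_{r}(g^{-t}y)$, and because $y \in W^{cs}_{r}(x)$ the points $g^{-t}x$ and $g^{-t}y$ stay $W^{cs}$-close (indeed $g^{-t}y \in W^{cs}(g^{-t}x)$, and the $W^{s}$-distance contracts). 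So the image curve $L_{xy}(c)$, pushed back by $g^{-t}$, is uniformly $C^{0}$-close to $g^{-t}(c)$ and one gets a uniform a priori bound on the tangent directions of $g^{-t}(L_{xy}(c))$; the $\beta$-fiber bunching hypothesis on $Dg^{t}|H^{u}$ is precisely what makes the sequence of tangent directions a contraction in the appropriate Hölder norm, so $g^{-t}(L_{xy}(c))$ is a Cauchy sequence of $C^{1}$ curves and its limit direction field is well-defined, tangent to $H^{u}$, and varies continuously. Pushing forward by $g^{t}$ shows $L_{xy}(c)$ is $C^{1}$ and tangent to $H^{u}$, hence $L_{xy}$ is differentiable along $H^{u}$. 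The identification of the derivative then follows from the defining cocycle equation (property (3) in Definition \ref{defuhol} and its $h^{cs}$ analogue): $D_{z}L_{xy}|H^{u}$ is an $A^{t} = Dg^{t}|H^{u}$-equivariant family of linear maps $H^{u}_{z} \to H^{u}_{L_{xy}(z)}$ along the $W^{cs}$-direction that is $C^{0}$-close to the canonical identification $I_{z L_{xy}(z)}$ (by the a priori bound above, with error $O(d(z,L_{xy}(z))^{\beta})$), so by the uniqueness clause in Proposition \ref{existuhol} (extended to center-stable holonomies as in Section \ref{amenred}) it must coincide with $h^{cs}_{z L_{xy}(z)}$.

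The step I expect to be the main obstacle is establishing that $L_{xy}$ genuinely preserves the vertical foliation $W^{vu}$ — equivalently, that $E^{cs} \oplus V^{u}$ integrates and contains the $W^{cs}$ and $W^{vu}$ leaves in the expected way — since this is the one place where the lack of a center foliation for $g^{1}$ bites: one cannot simply invoke a center-stable foliation and subfoliate it. I would handle it by viewing $f = g^{1}$ as partially hyperbolic with splitting $(H^{u} \oplus E^{c} \oplus E^{s}) \oplus V^{u}$ in \emph{two} complementary ways — once with $V^{u}$ as the unstable bundle (giving $W^{vu}$ and the invariant bundle $E^{cs} \oplus H^{u}$ from \cite{HPS}), and keeping in mind that the genuine strong-stable foliation $W^{s}$ and the weak-unstable-type object $W^{cs}$ are already honest foliations for the flow — and then argue that the $W^{cs}$-holonomy commutes with the $W^{vu}$-holonomy by a direct orbit-shadowing estimate: two points on the same $W^{vu}$ leaf, after applying $L_{xy}$, have forward images under $g^{-t}$ that remain both $W^{cs}$-close and with $W^{u}$-distance contracted at the vertical (fastest) rate, which by uniqueness of the local product structure of $W^{u}$ forces them onto a common $W^{vu}$ leaf. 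A cleaner alternative, which I would pursue if the direct estimate is messy, is to bypass the foliation claim entirely: work directly in $W^{u}_{r}(x)$, prove $L_{xy}$ maps $C^{1}$ curves tangent to $H^{u}$ to $C^{1}$ curves tangent to $H^{u}$ by the graph-transform argument above (which never needs the quotient), and read off the derivative formula from the uniqueness of center-stable holonomies — the descent to $Q^{u}$ then becomes a corollary rather than a prerequisite. Either way the analytic heart is the fiber-bunching contraction on tangent directions, which is routine once the setup is in place.
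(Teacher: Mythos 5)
Your fallback plan (work directly inside $W^{u}_{r}(x)$ and compare $L_{xy}$ with the linear holonomy) is the right idea and is what the paper actually does, but both routes as written have real problems. For the first route: the assertion that $E^{cs}\oplus V^{u}$ is integrable ``by the standard partially hyperbolic machinery applied to $f=g^{1}$ exactly as in Proposition \ref{smoothvert}'' does not follow from anything you cite. The theory of \cite{HPS} gives the strong foliations (here $W^{vu}$ and $W^{s}$) and, only under additional coherence hypotheses, center-stable/center-unstable foliations; it says nothing about a sum such as $E^{s}\oplus E^{c}\oplus V^{u}$ that skips the intermediate (weak unstable) bundle $H^{u}$, and joint integrability of $E^{cs}$ with $V^{u}$ is a strong non-generic condition, not a consequence of domination. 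Relatedly, the claim that $L_{xy}$ carries $W^{vu}$ leaves to $W^{vu}$ leaves is exactly the kind of statement the lemma is designed to avoid: the paper emphasizes that there may be no foliation tangent to the center $H^{u}\oplus E^{c}$, never proves or uses that $W^{cs}$-holonomy respects $W^{vu}$, and works with curves tangent to $H^{u}$ precisely so that no quotient or foliation statement is needed.

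For the second route, the analytic core is run in the wrong time direction. You claim that $g^{-t}x$ and $g^{-t}y$ ``stay $W^{cs}$-close'' and that ``the $W^{s}$-distance contracts'' under backward iteration; in fact under $g^{-t}$ the stable displacement between $x$ and $y\in W^{cs}_{r}(x)$ expands exponentially, so the backward iterates of the two unstable leaves drift apart and $g^{-t}(L_{xy}(c))$ is not $C^{0}$-close to $g^{-t}(c)$. Moreover $L_{xy}(c)$ is a priori only a H\"older curve, so there are no ``tangent directions of $g^{-t}(L_{xy}(c))$'' to bound -- their existence is the content of the lemma -- and since $H^{u}$ is the dominated direction inside $E^{u}$, forward iteration pushes directions away from $H^{u}$, so no naive graph transform produces a limit curve tangent to $H^{u}$. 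The paper instead iterates forward, where the two base points converge along the (projected) stable foliation in cross-sections, writes the holonomy as $\varphi=F^{-n}\circ\psi_{n}\circ F^{n}$, replaces $\psi_{n}$ by smooth approximations $A_{n}$ H\"older-close to the identifications $I_{pq}$, and uses the $\beta$-fiber-bunching inequality, roughly $\|(DF^{n})^{-1}|E^{u}\|\cdot\|DF^{n}|\widehat{H}^{u}\|\cdot\|DF^{n}|E^{s}\|^{\beta}\leq C\delta^{n}$, both to make the telescoping sum for $(DF^{n}_{y})^{-1}\circ DA_{n}\circ DF^{n}_{x}|H^{u}$ converge and to show $\|\varphi_{n}(\gamma(s))-\varphi(\gamma(s))\|\leq C\delta^{n}|s|$; domination is then used under $DF^{-n}_{y}$ to force the limiting linear map to have image in $H^{u}$, and an equivariance-plus-H\"older squeeze identifies it with $h^{cs}$. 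Your final step (identify the derivative with $h^{cs}$ by equivariance, closeness to $I$, and the uniqueness clause of Proposition \ref{existuhol}) is fine in principle, but it only becomes available after differentiability is established by an argument of the above type, which your sketch does not supply.
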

\begin{proof}
Let $x$, $y$ be two points in $X$ such that $x \in W_{r}^{cs}(y)$. Set $x_{n} = g^{n}x$ and $y_{n} = g^{n}y$. For each $n \geq 0$, choose a  hypersurface $S_{n}$ of uniform size and biLipschitz to an open subset of $\R^{2m}$ with Lipschitz constants independent of $n$ that is transverse to the direction of the flow $E^{c}$, and contains $W^{u}_{r}(x_{n})$ and $W^{u}_{r}(y_{n})$. Let $f_{n}: S_{n-1} \rightarrow S_{n}$ be the smooth map defined by $f_{n}(r) = g^{t(r)}(r)$, where $t(r)$ is the unique time, smoothly depending on $r$, with $t(x_{n-1}) = 1$ and such that $g^{t(r)} \in S_{n}$. $f_{n}$ is defined on a neighborhood of $x_{n-1}$ of uniform size, independent of $n$. Further, $f_{n}$ is uniformly hyperbolic on the interior of this neighborhood with the same contraction and expansion estimates (up to multiplicative constants) as $g^{1}$ on the stable and unstable bundles $E^{u}$ and $E^{s}$. Set $F^{n} = f_{n} \circ f_{n-1} \circ \dots \circ f_{1}$. Note that $F^{n}$ is defined on increasingly small neighborhoods of $x$ as $n \rightarrow \infty$; the only points for which $F^{n}$ is defined for all $n \geq 1$ are the points on the intersection of $W_{r}^{cs}(x)$ with $S := S_{0}$. 

Let $\beta$ be the minimum of the H\"older exponents of $H^{u}$ and $E^{u}$ viewed as subbundles of $TX$. As remarked in the Introduction, there is a $\beta$-H\"older system of linear identifications $I_{pq}:E^{u}_{p} \rightarrow E^{u}_{q}$ defined for $p$ near $q$ with $I_{pp}$ being the identity on $E^{u}_{p}$. We can choose these identifications so that $I_{pq}(H^{u}_{p}) = H^{u}_{q}$. For each $n$, let $A_{n}: W^{u}_{r}(x_{n}) \rightarrow W^{u}_{r}(y_{n})$ be a diffeomorphism with $A_{n}(x_{n}) = y_{n}$. Since the unstable foliation is H\"older continuous in the $C^{1}$ topology with H\"older exponent $\beta$, we can choose $A_{n}$ such that 
\[
\|I_{qp} \circ DA_{n}-Id\| \leq Cd(p,q)^{\beta} 
\]
\[
\|DA_{n} \circ I_{pq}-Id\| \leq Cd(p,q)^{\beta}
\]
for some constant $C > 0$ and $p \in W^{u}_{r}(x_{n})$, $q \in W^{u}_{r}(y_{n})$. For $z \in S_{n}$, let $\widehat{W}^{s}(z)$ denote the smooth projection of $W^{s}_{r}(z)$ onto $S_{n}$ along the orbit foliation $E^{c}$, given by using $g^{t}$ to flow these leaves onto $S_{n}$. Let $\widehat{H}^{u}$, $\widehat{E}^{u}$, and $\widehat{E}^{s}$ denote the projection of these subbundles onto $TS_{n}$ by flowing along the orbit foliation. 

Let $\varphi$ be the holonomy map between $W^{u}_{r}(x)$ and $W^{u}_{r}(y)$ induced by the projected stable foliation $\widehat{W}^{s}$. Let $\varphi_{n} = F^{-n} \circ A_{n} \circ F^{n}$, which is defined on a neighborhood of $x$ (dependent on $n$) inside of $W^{u}_{r}(x)$. Let $\gamma:[-1,1]$ be a $C^{1}$ curve tangent to $H^{u}$ inside of $W^{u}_{r}(x)$ with $\gamma(0) = x$. Our first goal is to prove that $\varphi \circ \gamma$ is differentiable at 0, i.e., that the image of the curve $\gamma$ under $\widehat{W}^{s}$-holonomy along the transversal $S$ is differentiable at $p$. 

We first claim that the sequence of linear maps $\left\{(DF^{-n}_{y}\circ DA_{n} \circ DF^{n}_{x})|H^{u}_{x}: \, n \in \N\right\}$ is Cauchy (note that we have restricted the domain of these maps to $H^{u}_{x}$). We closely follow the proof of Proposition \ref{existuhol} given in \cite{KS}. We begin with the formula 
\begin{align*}
(DF^{n}_{y})^{-1} \circ  DA_{n} \circ DF^{n}_{x} = DA_{0} + \sum_{i=0}^{n-1}(DF^{i}_{y})^{-1} \circ R_{i} \circ DF^{i}_{x}
\end{align*}
where $R_{i} = (D_{y_{i}}f_{i+1})^{-1} \circ DA_{i+1} \circ D_{x_{i}}f_{i+1}-DA_{i}$. For the rest of the proof we will consider all linear maps as restricted to $\widehat{E}^{u}$ for the purpose of calculating norms. We want to estimate the product 
\begin{align*}
\|(DF^{n}_{y})^{-1}\| \cdot \|DF^{n}_{x}|\widehat{H}^{u}\| &\leq \prod_{i=0}^{n-1}\|(D_{y_{i}}f_{i})^{-1}\| \cdot \|D_{x_{i}}f_{i}|\widehat{H}^{u}\| \\
&= \left(\prod_{i=0}^{n-1}\|(D_{y_{i}}f_{i})^{-1}\| \cdot \|(D_{x_{i}}f_{i})^{-1}|\widehat{H}^{u}\|^{-1}\right) \\ &\cdot \left(\prod_{i=0}^{n-1}\|(D_{x_{i}}f_{i})^{-1}|\widehat{H}^{u}\| \cdot \|D_{x_{i}}f_{i}|\widehat{H}^{u}\|\right)
\end{align*}
To bound the first factor, we observe that $\|(D_{x_{i}}f_{i})^{-1}|\widehat{H}^{u}\| = \|(D_{x_{i}}f_{i})^{-1}\| $ since $\widehat{H}^{u}$ is the less expanded term of the dominated splitting $\widehat{E}^{u} = \widehat{V}^{u} \oplus \widehat{H}^{u}$. We then use the estimate
\begin{align*}
\frac{\|(D_{y_{i}}f_{i})^{-1}\|}{\|(D_{x_{i}}f_{i})^{-1}\|} &\leq \frac{\|(D_{y_{i}}f_{i})^{-1}-I_{x_{i}y_{i}} \circ (D_{x_{i}}f_{i})^{-1} \circ I_{x_{i+1}y_{i+1}}^{-1}\|}{\|(D_{x_{i}}f_{i})^{-1}\|}\\
&+\frac{\|I_{x_{i}y_{i}} \circ (D_{x_{i}}f_{i})^{-1} \circ I_{x_{i+1}y_{i+1}}^{-1}\|}{\|(D_{x_{i}}f_{i})^{-1}\|} \\
&\leq C'd(x_{i},y_{i})^{\beta} + 1
\end{align*}
for some constant $C'$. Here we use the fact that $\|I_{pq}\|$ is uniformly bounded when $p$ and $q$ are close (say $d(p,q) \leq r$), and that the derivative $D_{p}f_{i}$ is smooth as a function of $p$, hence when we use the identifications $I_{pq}$, it becomes H\"older with H\"older exponent $\beta$. 

To bound the second factor, we note that $Df_{i}|\widehat{H}^{u}$ is fiber bunched since the cocycle $Dg^{t}|H^{u}$ we derived it from was fiber bunched. Hence there is a constant $\delta < 1$ such that 
\[
\|(D_{p}f_{i})^{-1}|\widehat{H}^{u}\| \cdot \|D_{p}f_{i}|\widehat{H}^{u}\| \leq \|D_{p}f_{i}|\widehat{E}^{s}\|^{-\beta} \delta 
\]
for all $p \in S_{i}$, where $\delta$ is independent of $i$. 

Putting these two bounds together, we obtain
\[
\|(DF^{n}_{y})^{-1}\| \cdot \|DF^{n}_{x}|\widehat{H}^{u}\| \leq \prod_{i=0}^{n-1}(C'd(x_{i},y_{i})^{\beta}+1)  \prod_{i=0}^{n-1} \delta \|D_{x_{i}}f_{i}|\widehat{E}^{s}\|^{-\beta} 
\]
The first product is uniformly bounded since $d(x_{i},y_{i}) \rightarrow 0$ exponentially in $i$, so we get a constant $C''$ such that 
\[
\|(DF^{n}_{y})^{-1}\| \cdot \|DF^{n}_{x}|\widehat{H}^{u}\| \leq C''\delta^{n}\prod_{i=0}^{n-1}\|D_{x_{i}}f_{i}|\widehat{E}^{s}\|^{-\beta}
\]
Now we can also estimate 
\begin{align*}
\|R_{i}\| &\leq \|(D_{y_{i}}f_{i+1})^{-1} \circ DA_{i+1}\| \cdot \|D_{x_{i}}f_{i+1}-DA_{i+1}^{-1} \circ D_{y_{i}}f_{i+1} \circ DA_{i}\| \\
&\leq Cd(x_{i},y_{i})^{\beta} \\
&\leq Cd(x,y)^{\beta}\prod_{i=0}^{n-1}\|D_{x_{i}}f_{i}|\widehat{E}^{s}\|^{\beta}
\end{align*}
for some constant $C$. In the first inequality we used the H\"older closeness of $DA_{i}$ to the identity, together with uniform bounds on the norms of all of the linear maps involved. In the second inequality we use the fact that $x$ and $y$ lie on the same stable manifold in $S$. We have the basic bound
\begin{align*}
\|(DF^{n}_{y})^{-1} \circ DA_{n} \circ DF^{n}_{x} - DA_{0}|\widehat{H}^{u}\| &\leq \sum_{i=0}^{n-1}\|(DF^{i}_{y})^{-1} \circ R_{i} \circ DF^{i}_{x}|\widehat{H}^{u}\| \\
&\leq \sum_{i=0}^{n-1}\|(DF^{i}_{y})^{-1}\| \cdot \|DF^{i}_{x}|\widehat{H}^{u}\| \cdot \|R_{i}\| 
\end{align*}
We replace the right side with the previously obtained bounds on the factors $\|(DF^{i}_{y})^{-1}\| \cdot \|DF^{i}_{x}|\widehat{H}^{u}\|$ and $\|R_{i}\|$. This gives an upper bound of 
\begin{align*}
\sum_{i=0}^{n-1} \left( C''\delta^{i}\prod_{j=0}^{i-1}\|D_{x_{i}}f_{i}|\widehat{E}^{s}\|^{-\beta} \cdot Cd(x,y)^{\beta}\prod_{j=0}^{i-1}\|D_{x_{i}}f_{i}|\widehat{E}^{s}\|^{\beta} \right) \leq C^{*}d(x,y)^{\beta}
\end{align*}
for some constant $C^{*}$. Also note that 
\begin{align*}
\|(DF^{n+1}_{y})^{-1} \circ DA_{n+1} \circ DF^{n+1}_{x} &- (DF^{n}_{y})^{-1} \circ DA_{n} \circ DF^{n}_{x}|\widehat{H}^{u}\| \\
&= \|(DF^{n}_{y})^{-1} \circ R_{n} \circ DF^{n}_{x}|\widehat{H}^{u}\| \\
&\leq C^{*}\delta^{n}d(x,y)^{\beta}
\end{align*}
This second inequality immediately implies that the sequence of linear maps 
\[
\left\{(DF^{n}_{y})^{-1}\circ DA_{n} \circ DF^{n}_{x}|H^{u}_{x}: \, n \in \N\right\}
\]
is Cauchy. Hence this sequence converges to a linear map $T_{xy}: \widehat{H}^{u}_{x} \rightarrow \widehat{E}^{u}_{y}$. However, for any given vector $v \in \widehat{H}^{u}_{x}$, $DA_{n} \circ DF^{n}_{x}(v)$ is a vector which makes an angle $\theta_{n}$ with $\widehat{H}^{u}_{y}$, where $\theta_{n}$ is uniformly bounded away from $\pi/2$, independent of $n$. Applying $DF^{-n}_{y}$ exponentially contracts this angle since the splitting $\widehat{E}^{u} = \widehat{V}^{u} \oplus \widehat{H}^{u}$ is dominated, so letting $n \rightarrow \infty$, we conclude that $T_{xy}$ must have image in $\widehat{H}^{u}_{y}$. 

For each $j \geq 0$, we can also consider the sequence of linear maps
\[
\left\{(DF^{n+j}_{y} \circ (DF^{j}_{y})^{-1})^{-1}\circ DA_{n+j} \circ DF^{n+j}_{x} \circ (DF^{j}_{x})^{-1}|H^{u}_{x}: \, n \in \N\right\}
\]
For the same reasons as for the original sequence, this sequence is Cauchy and converges to a limit that we denote $T_{x_{j}y_{j}}$ which is a linear map from $\widehat{H}^{u}_{x_{j}}$ to $\widehat{H}^{u}_{y_{j}}$. It is straightforward to check that for each $n$ we have $(DF^{n}_{y})^{-1} \circ T_{x_{n}y_{n}} \circ DF^{n}_{x} = T_{xy}$ by writing out the limiting expression for $T_{x_{n}y_{n}}$. Since we chose the transversal $S$ to contain $W^{u}_{r}(x)$ and $W^{u}_{r}(y)$, we have $\widehat{H}^{u}_{x} = H^{u}_{x}$ and the same for $y$. We now consider the center stable holonomy map $h^{cs}_{xy}:H^{u}_{x} \rightarrow H^{u}_{y}$. This is equivariant with respect to $DF^{n}$ as well and also depends in a $\beta$-H\"older manner on the points $x$ and $y$.  Then 
\begin{align*}
\|h^{cs}_{xy}-P_{xy}\| &= \|(DF^{n}_{y})^{-1} \circ (h^{cs}_{x_{n}y_{n}} - T_{x_{n}y_{n}}) \circ DF^{n}_{x}|H^{u}_{x}\| \\
&\leq \|(DF^{n}_{y})^{-1}\| \cdot \| DF^{n}_{x}|H^{u}_{x}\|\cdot \|h^{cs}_{x_{n}y_{n}} - T_{x_{n}y_{n}}\| \\
&\leq C\delta^{n}\prod_{i=0}^{n-1}\|D_{x_{i}}f_{i}|\widehat{E}^{s}\|^{-\beta}d(x_{n},y_{n})^{\beta} \\
&\leq C^{*}\delta^{n}
\end{align*}
for some constant $C^{*}$. As $n \rightarrow \infty$, $\delta^{n} \rightarrow 0$, so $h^{cs}_{xy}=T_{xy}$. 

To prove differentiability of $\varphi \circ \gamma$, take a coordinate chart on $S$ (as well as each of the transversals $S_{n}$) so that we can work with the linear structure on $\R^{2m}$. Let $y$ correspond to the origin. We will not change the notation for the maps, so they should be understood in this chart. Let $v = \gamma'(0)$. We need to show that $\varphi(\gamma(s))$ agrees with its claimed linearization $s \cdot h^{cs}_{xy}(v)$ to first order at the origin. First observe that the calculations above are valid if we replace $x$ and $y$ by any two points $x'$, $y'$ in $S$ such that $y' \in \widehat{W}^{s}_{loc}(x')$, whenever $n$ is small enough (relative to $x'$ and $y'$) that the iterates $F,F^{2},\dots,F^{n}$ are all defined on a neighborhood of $x'$ and $y'$. This implies that 
\[
\|(DF^{n})^{-1} \circ DA_{n} \circ DF^{n}_{\gamma(s)}(\gamma'(s)) - DA_{0}(\gamma'(s))\| \leq C|s|^{\beta}
\]
whenever $s$ is small enough that $F^{n}$ is defined on a neighborhood of $\gamma(s)$ and $A_{n}(F^{n}(\gamma(s)))$ lies in the image of $F^{n}$. The constant $C$ is independent of $n$, so $(DF^{n})^{-1} \circ DA_{n} \circ DF^{n}_{\gamma(s)}(\gamma'(s))$ is a H\"older continuous function of $s$ with H\"older exponent and constant independent of $n$ for $|s|$ small. Note that $A_{n}(F^{n}(\gamma(s)))$ will not necessarily lie on $\widehat{W}^{s}(F^{n}(\gamma(s)))$, but it will be $\beta$-H\"older close to the intersection of $\widehat{W}^{s}(F^{n}(\gamma(s)))$ with $W^{u}_{r}(y_{n})$, so our estimates remain valid.  By the mean value inequality, we thus obtain 
\[
\|\varphi_{n}(\gamma(s)) - s \cdot D\varphi_{n}(\gamma'(0))\| \leq C|s|^{1+\beta}
\]
for a constant $C$. 

We next estimate the difference between $\varphi$ and $\varphi_{n}$ near $\gamma(0)$. Observe that $\varphi = (F^{n})^{-1} \circ \psi_{n} \circ F^{n}$, where $\psi_{n}$ is the $\widehat{W}^{s}$-holonomy map from $\widehat{W}_{r}^{u}(x_{n})$ to $W^{u}_{r}(y_{n})$. Hence for $s$ small enough that $\gamma(s)$ is in the domain of definition of the expressions below, 
\begin{align*}
\|\varphi_{n}(\gamma(s))-\varphi(\gamma(s))\| &= \|((F^{n})^{-1} \circ A_{n} \circ F^{n} - (F^{n})^{-1} \circ \psi_{n} \circ F^{n})(\gamma(s))\| \\
&\leq C \|(DF^{n})^{-1}|E^{u}\| \cdot \|A_{n} \circ F^{n}(\gamma(s)) - \psi_{n} \circ F^{n}(\gamma(s))\| 
\end{align*}
since $F^{-n}$ exponentially contracts distances on unstable leaves. Next we note that $\psi_{n}$ and $A_{n}$ are $\beta$-H\"older close in the $C^{0}$ topology. As a consequence, since they both map $x$ to $y$,
\begin{align*}
C \|(DF^{n})^{-1}|E^{u}\| &\cdot \|A_{n} \circ F^{n}(\gamma(s)) - \psi_{n} \circ F^{n}(\gamma(s))\| \\
&\leq C \|(DF^{n})^{-1}|E^{u}\| d(F^{n}(x),F^{n}(y))^{\beta}\|F^{n}(\gamma(s))\| \\
&\leq C\|(DF^{n})^{-1}|E^{u}\|\cdot \|DF^{n}|E^{s}\|^{\beta}\cdot \|F^{n}(\gamma(s))\| \\
&\leq C\|(DF^{n})^{-1}|E^{u}\|\cdot \|DF^{n}|E^{s}\|^{\beta}\cdot \|DF^{n}|\widehat{H}^{u}\| \cdot |s| \\
&\leq C\delta^{n}|s|
\end{align*}
where we have not paid much attention to the constant $C$ in front (which will change from line to line). In the third line we use the exponential contraction of stable leaves by $F^{n}$, and in the fourth line we use the fiber bunching property on $H^{u}$ transferred to the induced bundle $\widehat{H}^{u}$, noting that $\|(DF^{n})^{-1}|E^{u}\| = \|(DF^{n})^{-1}|\widehat{H}^{u}\|$. 

We now compare $\varphi \circ \gamma$ to the linearization $h^{cs}_{xy}(v) \cdot s$ at 0. Fix $n \in \N$. For $|s|$ small enough that all of the expressions above are defined for this $n$, we obtain 
\begin{align*}
\|\varphi (\gamma(s)) - h^{cs}_{xy}(v) \cdot s\| &\leq \|\varphi(\gamma(s))-\varphi_{n}(\gamma(s))\| + \|\varphi_{n}(\gamma(s)) - s \cdot D\varphi_{n}(v)\| \\
&+ |s| \cdot \|D\varphi_{n}(v)-h^{cs}_{xy}(v)\| \\
&\leq C(\delta^{n}|s| + |s|^{1+\beta} + |s| \cdot \|D\varphi_{n}(\gamma'(0))-h^{cs}_{xy}(v)\|)
\end{align*}
Dividing through by $|s|$, we obtain 
\[
\frac{\|\varphi (\gamma(s)) - h^{cs}_{xy}(v) \cdot s\|}{|s|} \leq C(\delta^{n} + |s|^{\beta} + \|D\varphi_{n}(v)-h^{cs}_{xy}(v)\|)
\]
We can consider $n:=n(s)$ as an integer function of $s$ such that $n(s) \rightarrow \infty$ as $s \rightarrow 0$. Then as $s \rightarrow 0$, the right side converges to 0. We thus obtain that $\varphi \circ \gamma$ agrees to first order with its linearization at 0, i.e., $\varphi \circ \gamma$ is differentiable at 0, and furthermore, $(\varphi \circ \gamma)'(0) = h^{cs}_{\gamma(0)\varphi(\gamma(0))}(\gamma'(0))$. 

Now observe that holonomy from $W^{u}_{r}(x)$ to $W^{u}_{r}(y)$ along the projected stable foliation $\widehat{W}^{s}$ corresponds precisely to $W^{cs}$-holonomy in $X$. Hence the curve $\varphi \circ \gamma$ is also the image of $\gamma$ under the $W^{cs}$-holonomy $L_{xy}$. We can apply our calculations to the other points of $\gamma$ by recentering at each pair of points $x'$, $y'$ lying on $\gamma$ and $\varphi \circ \gamma$ respectively with $y' \in W^{cs}_{r}(x')$. This proves that $\varphi \circ \gamma$ is differentiable for every $t \in [-1,1]$, and furthermore we have the derivative formula
\[
(\varphi \circ \gamma)'(t) = h^{cs}_{\gamma(t)\varphi(\gamma(t))}(\gamma'(t))
\]
which completes the proof. 
\end{proof}
\end{section}

\begin{section}{Proof of Theorem 1.3}\label{thirdthm}
In this section we prove Theorem \ref{weakchyp} using the results of Section \ref{horsub}. We first give a description of the neighborhood $\mathcal{U}$ of the symmetric metric $d_{\mathbb{K}}$ referred to in the hypotheses of Theorem \ref{weakchyp} and \ref{regchyp}. As described in the Introduction there is a $C^{2}$ open neighborhood $\mathcal{U}^{*}$ of $d_{\mathbb{K}}$ such that for any $d \in \mathcal{U}^{*}$ the geodesic flow $g^{t}$ of $d$ admits dominated splittings $E^{u} = H^{u} \oplus V^{u}$ and $E^{s} = H^{s} \oplus V^{s}$ of the stable and unstable bundles. For each metric $d$ there is a constant $C \geq 1$ and $a < b < c < \l$ such that for every $t \geq 0$,
\[
C^{-1}e^{-bt} \leq \|Dg^{-t}|H^{u}\| \leq Ce^{-at},
\]
\[
C^{-1}e^{-\l t} \leq \|Dg^{-t}|V^{u}\| \leq Ce^{-ct},
\]
with $C \rightarrow 1$, $a,b \rightarrow 1$, and $c,\l \rightarrow 2$ as $d \rightarrow d_{\mathbb{K}}$ in the $C^{2}$ topology on Riemannian metrics. Using the flip map $p \rightarrow -p$ which is an isometry for the Sasaki metric on $SM$ we conclude that the above inequalities also hold with $H^{s}$ and $V^{s}$ replacing $H^{u}$ and $V^{u}$ and taking $t \geq 0$ instead. 

We want to show that for $a,b$ close enough to $1$ and $c,\l$ close enough to $2$ there is a $\beta > 0$ such that both $E^{u}$ and $H^{u}$ are $\beta$-H\"older continuous subbundles of $T(SM)$ and $Dg^{t}|H^{u}$ is $\beta$-fiber bunched. This then implies that we can apply the results of Section \ref{horsub} to the geodesic flow of metrics $d$ in a small enough $C^{2}$-neighborhood $\mathcal{U} \subset \mathcal{U}^{*}$ of $d_{\mathbb{K}}$. 

First we compute the H\"older regularity of $E^{u}$ using results of Hasselblatt \cite{Has2} on the regularity of the stable and unstable bundles for Anosov flows. Recall from the beginning of Section \ref{firstthm} that for $0 < \alpha < 2$ we say that $g^{t}$ is $\alpha$-bunched if there is some $T > 0$ such that for $t \geq T$
\[
\|Dg^{t}_{p}|E^{u}_{p}\|^{\alpha} \cdot \|Dg^{t}_{p}|E^{s}_{p}\| \cdot \|(Dg^{t}_{p})^{-1}|E^{u}_{g^{t}(p)}\| < 1, \; t \geq T, p \in SM
\]
The results of \cite{Has2} imply that if $g^{t}$ is $\alpha$-bunched then $E^{u}$ is $C^{\alpha-\e}$ for any $\e > 0$. Plugging in the bounds on $Dg^{t}|E^{u}$ and $Dg^{t}|E^{s}$ described above, we see that for $t \geq 0$, 
\[
\|Dg^{t}_{p}|E^{u}_{p}\|^{\alpha} \cdot \|Dg^{t}_{p}|E^{s}_{p}\| \cdot \|(Dg^{t}_{p})^{-1}|E^{u}_{g^{t}(p)}\| \leq C^{3}e^{(\l\alpha - 2a)t}
\]
Thus $g^{t}$ is $\alpha$-bunched if and only if $\alpha < \frac{2a}{\l}$. As $a \rightarrow 1$ and $\l \rightarrow 2$, we see that $\frac{2a}{\l} \rightarrow 1$, so given any $0 < \alpha <1$ we can always take the neighborhood $\mathcal{U}$ small enough that $E^{u}$ is $C^{\alpha}$ for $d \in \mathcal{U}$.  

For the regularity of $H^{u}$ we refer to the $C^{r}$ section theorem in \cite{Shu}. From this theorem we deduce that $H^{u}$ is $\beta$-H\"older continuous for any $\beta < \frac{b}{c}$. The ratio $\frac{b}{c}$ converges to $\frac{1}{2}$ as $b \rightarrow 1$, $c \rightarrow 2$, so by taking $d$ close enough to $d_{\mathbb{K}}$ we can assume $H^{u}$ is $\beta$-H\"older for any given $\beta < \frac{1}{2}$. It is then clear that if $a,b$ are close enough to 1 then $Dg^{t}|H^{u}$ is $\beta$-fiber bunched for $\beta$ close to $\frac{1}{2}$. 

We take $d$ to lie in the neighborhood $\mathcal{U}$ of $d_{\mathbb{K}}$ described above. We assume that there is a $g^{t}$-invariant ergodic, fully supported measure $\mu$ with local product structure such that $\la_{+}(Dg^{t}|H^{u},\mu) = \la_{-}(Dg^{t}|H^{u},\mu)$. We will not need Assumption (2) of \ref{weakchyp} until the proof of Lemma \ref{reg}. 

The first lemma we prove is the analogue of Lemma \ref{hypirr} for the horizontal bundle $H^{u}$.  For a continuous subbundle $\E$ of $T(SM)$ and a point $p \in SM$, we define the $\E$-accessibility class $\mathscr{A}(p;\E)$ of $p$ to be the set of all points $q \in SM$ which can be joined to $p$ by a piecewise $C^{1}$ curve $\gamma$ tangent to $\E$.

\begin{lem}\label{chypirr}
Let $\E \subset H^{u}$ be a nonzero measurable $g^{t}$-invariant subbundle. Then $\E = H^{u}$. 
\end{lem}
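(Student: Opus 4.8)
The plan is to mimic the proof of Lemma \ref{hypirr}, with the unstable leaves $W^{u}$ and the Kanai connection replaced by the quotient manifolds $Q^{u}(p)$ and the flat connection $\nabla$ produced in Lemma \ref{connect}. Since $d$ lies in the neighborhood $\mathcal{U}$, the cocycle $Dg^{t}|H^{u}$ is $\beta$-fiber bunched, and $E^{u}$, $E^{s}$ are not jointly integrable because $g^{t}$ is contact, so Lemma \ref{contsub} applies to the hypothesis $\la_{+}(Dg^{t}|H^{u},\mu) = \la_{-}(Dg^{t}|H^{u},\mu)$: the measurable invariant subbundle $\E$ coincides $\mu$-a.e.\ with a continuous subbundle of $H^{u}$ that is invariant under $Dg^{t}$ and under the stable and unstable holonomies of $Dg^{t}|H^{u}$. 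By the full support of $\mu$ this continuous subbundle is unique; I rename it $\E$.

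Next I push $\E$ down to the quotients. Using the bundle isomorphism $H^{u} \cong E^{u}/V^{u}$ along $W^{u}$ leaves together with the identity in Lemma \ref{connect}, invariance of $\E$ under the unstable holonomy of $Dg^{t}|H^{u}$ shows that $\E$ descends to a well-defined continuous subbundle $\bar{\E}$ of $TQ^{u}(p)$ that is parallel for $\nabla$. Since $\nabla$ is $C^{1}$ and $\bar{\E}$ is $\nabla$-parallel, $\bar{\E}$ admits local $C^{1}$ frames, and since $\nabla$ is flat and torsion-free a parallel frame $\bar{X}_{1},\dots,\bar{X}_{j}$ of $\bar{\E}$ satisfies $[\bar{X}_{a},\bar{X}_{b}] = \nabla_{\bar{X}_{a}}\bar{X}_{b} - \nabla_{\bar{X}_{b}}\bar{X}_{a} = 0$, so by the $C^{1}$ Frobenius theorem $\bar{\E}$ is uniquely integrable. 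Hence $Q^{u}(p) \cong \R^{k}$ (Proposition \ref{embed}) is foliated by integral manifolds of $\bar{\E}$, which in the affine chart on $Q^{u}(p)$ furnished by the flat connection $\nabla$ are the traces of parallel affine $(\dim\E)$-planes. Pulling back by $\Pi: W^{u}(p) \to Q^{u}(p)$ I obtain a partition $\mathcal{W}$ of $SM$ into submanifolds, each contained in a single $W^{u}$ leaf, tangent to $\E \oplus V^{u}$, and saturated by $W^{vu}$; continuity of $\E$, $V^{u}$ and the holonomies makes $\mathcal{W}$ a continuous foliation of $SM$ that is $C^{1}$ along $W^{u}$ leaves.

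Then I lift to $S\widetilde{M}$, obtaining a foliation $\widetilde{\mathcal{W}}$ invariant under the lifted flow and under $\pi_{1}(M)$, and I pass to the boundary using the Hopf parametrization and the projection charts $\pi_{x}: \widetilde{W}^{u}(x) \to \p\widetilde{M}\setminus\{x_{-}\}$ exactly as in the proof of Lemma \ref{hypirr}. The transition map $\pi_{y}^{-1}\circ\pi_{x}$ is the $W^{cs}$-holonomy $L_{xy}$, and this is where Lemma \ref{wcs} is used: $L_{xy}$ carries $C^{1}$ curves tangent to $H^{u}$ to $C^{1}$ curves tangent to $H^{u}$ with derivative along $H^{u}$ equal to the center-stable holonomy $h^{cs}$ of $Dg^{t}|H^{u}$, which preserves $\E$ since $\E$ is holonomy- and $Dg^{t}$-invariant. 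Combining this with the description of $\widetilde{\mathcal{W}}$ via the affine charts on the $Q^{u}$ and the identity in Lemma \ref{connect}, one checks that $L_{xy}$ descends to an affine map between the developing charts of $Q^{u}(x)$ and $Q^{u}(y)$ and carries $\widetilde{\mathcal{W}}$-leaves to $\widetilde{\mathcal{W}}$-leaves. Therefore $\widetilde{\mathcal{W}}$ is compatible with the atlas $\{\pi_{x}\}$ and descends to a $\pi_{1}(M)$-invariant continuous foliation $\mathcal{F}$ of $\p\widetilde{M}$, whose leaves have dimension $\dim\E + \dim V^{u}$.

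Finally I invoke the triviality of $\pi_{1}(M)$-invariant continuous foliations of $\p\widetilde{M}$ (Section 4 of \cite{Ham1}; see also \cite{F}): either every leaf of $\mathcal{F}$ is a point, or $\mathcal{F}$ has the single leaf $\p\widetilde{M}$. The first case forces $\dim\E + \dim V^{u} = 0$, hence $V^{u} = 0$, which is impossible since for $d\in\mathcal{U}$ the vertical bundle has dimension $r_{\mathbb{K}} - 1 \geq 1$. In the second case $\dim\E + \dim V^{u} = \dim\p\widetilde{M} = m-1 = \dim H^{u} + \dim V^{u}$, so $\dim\E = \dim H^{u}$ and hence $\E = H^{u}$. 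The main obstacle is the descent step in the previous paragraph: unlike in Lemma \ref{hypirr}, the holonomy $L_{xy}$ is only H\"older and need not preserve the vertical foliation $W^{vu}$, so one must use Lemma \ref{wcs} carefully to see that it nonetheless preserves the coarser foliation $\mathcal{W}$ and acts affinely on the quotients $Q^{u}$; establishing this compatibility, and that the resulting $\mathcal{F}$ is a genuine continuous foliation of $\p\widetilde{M}$ rather than merely a partition, is where the real work lies.
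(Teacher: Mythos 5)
Your overall strategy is not the paper's, and it hinges on a step that you flag but do not (and I believe cannot, with the tools at hand) carry out: the claim that the $W^{cs}$-holonomy $L_{xy} = \pi_{y}^{-1}\circ\pi_{x}$ carries leaves of your partition $\mathcal{W}$ (tangent to $\E\oplus V^{u}$) to leaves of $\mathcal{W}$ and ``descends to an affine map'' between the quotients $Q^{u}$. Lemma \ref{wcs} controls $L_{xy}$ only on $C^{1}$ curves tangent to $H^{u}$: it says such curves go to $C^{1}$ curves tangent to $H^{u}$ with derivative $h^{cs}$. It says nothing about the image of curves tangent to $V^{u}$, and $L_{xy}$ is merely H\"older in those directions. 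For $L_{xy}$ to respect the fibration $\Pi\colon W^{u}\to Q^{u}$ one would essentially need the distribution $E^{cs}\oplus V^{u}$ to be integrable (so that $W^{cs}$-holonomy preserves $W^{vu}$); this holds for the symmetric metric but there is no reason it survives perturbation, and nothing in Sections \ref{amenred}--\ref{horsub} provides it. Likewise, affinity of the induced map on $Q^{u}$ is nothing like automatic at this stage: in the paper an analogous statement (Lemma \ref{reg}) is only obtained much later, using the charts of Assumption (2) of Theorem \ref{weakchyp} and conformality, neither of which is available when proving Lemma \ref{chypirr}. So the descent of $\widetilde{\mathcal{W}}$ to a $\pi_{1}(M)$-invariant continuous foliation of $\p\widetilde{M}$ --- the whole engine of your argument --- is a genuine gap, not just ``the real work.''

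The paper's proof is designed precisely to avoid this. It works only with the objects Lemma \ref{wcs} does control: the accessibility classes $\mathscr{A}(p;\E)$ of piecewise $C^{1}$ curves tangent to $\E\subset H^{u}$. These are equivariant under $g^{t}$, under $\Gamma$, and under the transition maps $\pi_{y}^{-1}\circ\pi_{x}$ (because the derivative along $H^{u}$ is $h^{cs}$, which preserves $\E$), so they induce well-defined classes $\mathscr{A}(\xi)$ on $\p\widetilde{M}$. A Cauchy--Peano/ODE argument shows each class exits every small cube, and the density of axes of isometries together with north--south dynamics then gives that $\mathscr{A}(\xi)$, hence $\mathscr{A}(p;\E)$, is dense in each unstable leaf. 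Only then does the quotient $Q^{u}(p)$ enter, and only at a \emph{periodic} point $p$: there $D\Pi(\E)$ is parallel for the flat torsion-free connection $\nabla$ of Lemma \ref{connect}, hence integrable by the $C^{1}$ Frobenius theorem, and since $\mathscr{A}(p;\E)\subset\widetilde{\mathcal{F}}(p)$ the leaf through $\bar p$ is dense in $Q^{u}(p)$; a local-triviality-versus-contraction argument using $g^{-T}$ then forces $\mathcal{F}(\bar p)=Q^{u}(p)$, i.e.\ $\E=H^{u}$. No foliation of the boundary, no invariance of $W^{vu}$ under $W^{cs}$-holonomy, and no appeal to Hamenst\"adt's triviality theorem is needed. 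You should restructure your argument along these lines rather than trying to transplant the proof of Lemma \ref{hypirr} wholesale.
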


\begin{proof}
By Lemma \ref{contsub}, $\E$ coincides $\mu$-a.e. with a continuous $g^{t}$-invariant, holonomy invariant subbundle of $H^{u}$, which we will also denote by $\E$. We claim that $\mathscr{A}(p;\E)$ is dense in $W^{u}(p)$ for every $p \in SM$. Since $\E$ is $g^{t}$-invariant, $\mathscr{A}(g^{t}p;\E) = g^{t}(\mathscr{A}(p;\E))$ for every $t \in \R$. Pass to the universal cover $S\widetilde{M}$ and note that for every $\gamma \in \Gamma := \pi_{1}(M)$, we also have $\mathscr{A}(D\gamma(p);\E) = D\gamma(\mathscr{A}(p;\E))$ for $p \in S\wt{M}$, since the lifted bundle $\tilde{\E}$ is invariant under $\Gamma$. As in the proof of Lemma \ref{hypirr}, we let $\pi_{x}: \wt{W}^{u}(x) \rightarrow \p \wt{M} \backslash \{x_{-}\}$ be the projection homeomorphism onto the boundary. For $x, y \in S \wt{M}$, consider as before the transition homeomorphism 
\[
\pi_{y}^{-1} \circ \pi_{x}: \widetilde{W}^{u}(x) \backslash \{\pi_{x}^{-1}(y_{-})\} \rightarrow \widetilde{W}^{u}(y) \backslash \{\pi_{y}^{-1}(x_{-})\} 
\]
Lemma \ref{wcs} implies that $\pi_{y}^{-1} \circ \pi_{x}$ is differentiable when restricted to $C^{1}$ curves tangent to $H^{u}$, and that the derivative is given by the global center stable holonomy map $h^{cs}$ for $H^{u}$. Since $\E$ is invariant under $h^{cs}$, this implies that 
\[
(\pi_{y}^{-1} \circ \pi_{x})(\mathscr{A}(p;\E)) = \mathscr{A}(\pi_{y}^{-1}(\pi_{x}(p));\E).
\] 
for any $p \in \wt{W}^{u}(x)$. We thus conclude that for each $\xi \in \p \widetilde{M}$, there is a well defined subset $\mathscr{A}(\xi)$ of $\p \wt{M}$ consisting of all points $\zeta \in \p \wt{M}$ which can be joined to $\xi$ by a curve $\gamma$ in $\p \wt{M}$ which is piecewise $C^{1}$ and tangent to $\E$ in some $\pi_{x}$ coordinate chart (and therefore is tangent to $\E$ in any such coordinate chart). Furthermore the $\Gamma$-equivariance of $\E$ accessibility classes translates into the relation $\gamma (\mathscr{A}(\xi)) =\mathscr{A}( \gamma (\xi)) $.

We would like to show that $\mathscr{A}(\xi)$ is dense in $\p \widetilde{M}$. Let $U$ be an open set in $\p \widetilde{M}$ which does not contain $\xi$. Let $x$ be the image of $\xi$ in an unstable leaf $\wt{W}^{u}(x)$. Take a small open neighborhood $A$ of $x$ which is disjoint from the image of $U$ in $\wt{W}^{u}(x)$. We claim that if $A$ is small enough, then $\mathscr{A}(y;\E)$ intersects the topological boundary $\p A$ of $A$ for every $y \in A$. We begin by reducing this to an equivalent 2-dimensional problem. Take a coordinate chart on $\wt{W}^{u}(x)$ mapping $x$ to the origin of $\R^{m-1}$ and $\E_{x}$ to the coordinate plane corresponding to the first $k$ coordinates, where $k = \dim \E$. Let $C_{\e}$ be the cube $[-\e,\e]^{m-1}$ centered at $p$. Take some $q \in C_{\e}$ and consider the projected image $\bar{q}$ of $q$ in $\R^{k} = \E_{x}$, the first $k$ coordinates. Let $L_{q}$ be the line through $\bar{q}$ parallel to the first coordinate axis. Choose a direction among the last $m-k-1$ coordinates (for definiteness, the ($k+1$)st coordinate). Let $P_{q}$ be the plane spanned by $L_{q}$ and the $(k+1)$st coordinate. As long as $\e$ is small enough (uniformly in $q$), for every $p \in C_{\e}$ the intersection of $\E_{p}$ with $P_{q}$ will be a line. Fix this $\e$ from now on. Identify $P_{q}$ with $\R^{2}$. We see then that it suffices to solve the following equivalent problem: Given an ODE $y' = f(x)$ with $f$ continuous and $|f| \leq K$ everywhere on $[-\e,\e]^{2}$ (note this $K > 0$ is uniform in $q$) show that there is a $C^{1}$ solution $\sigma$ with $\sigma(0) = 0$ such that either $\sigma(t)$ is defined on $[0,\e]$ or else there is some $t \in [0,\e)$ such that $|\sigma(t)| > \e$. 

By the Cauchy-Peano existence theorem for ODEs with continuous coefficients the uniform bound $|f| \leq K$ ensures that there is a uniform $\delta > 0$ such that a solution to the initial value problem $\sigma(t_{0}) = y_{0}$, $\sigma'(t) = f(\sigma(t))$ exists on $[t_{0},t_{0}+\delta]$ provided $|\sigma(t)| \leq \e$ on $[t_{0},t_{0}+\delta]$ and $t \leq \e$ (Theorem 2.19 of \cite{Tes}). Thus, starting at $\sigma(0)=0$, construct a solution existing on $[0,\delta]$, then concatenate this with a solution existing on $[\delta,2\delta]$ and so on. This process ends when either $k \delta > \e$ (which happens after a finite number $\e/\delta$ of steps) or when a solution exceeds $\e$ in absolute value. In either case, we are done. 

Thus $\mathscr{A}(\xi)$ intersects $\p A$ for $A$ small enough. The pairs of endpoints of axes of the isometries $\gamma \in \Gamma$ of $\widetilde{M}$ are dense in $\p \widetilde{M} \times \p \widetilde{M}$, hence we can find an isometry $\gamma$ with the forward endpoint $\gamma_{+} \in U$ of the axis lying in $U$, and the backward endpoint $\gamma_{-} \in A$. Since $\gamma$ gives rise to north-south dynamics on the sphere $\p \widetilde{M}$  there is some $k > 0$ such that $A\subset \gamma^{k}A$ and $\gamma^{k}(\p A) \subset U$. There is thus some $\zeta \in A$ such that $\gamma^{k}\zeta = \xi$. But we know that $\mathscr{A}(\zeta)$ intersects $\p A$ and thus $\gamma^{k}(\mathscr{A}(\zeta)) = \mathscr{A}(\xi)$ intersects $U$. This implies the desired conclusion. 

Fix a periodic point $p \in SM$ of period $T$. Since the bundle $\E$ is a holonomy-invariant subbundle of $H^{u}$, it descends to a subbundle $D\Pi(\E)$ of $TQ^{u}(p)$. $D\Pi(\E)$ is parallel with respect to the connection $\nabla$ constructed in Lemma \ref{connect}, hence since $\nabla$ is torsion-free (as in Lemma \ref{hypirr}), $D\Pi(\E)$ is uniquely integrable and there is thus a foliation $\mathcal{F}$ tangent to $D\Pi(\E)$ inside of $Q^{u}$. Let $\bar{p}$ be the projection of $p$ in $Q^{u}(p)$ and let $\widetilde{\mathcal{F}}(p)$ be the inverse images of all points in the leaf $\mathcal{F}(\bar{p})$ through $\bar{p}$ of $\mathcal{F}$ inside of $Q^{u}(p)$. It is clear that $\mathscr{A}(p;\E) \subset \wt{\mathcal{F}}(p)$, since any piecewise $C^{1}$ curve tangent to $\E$ and passing through $p$ must project to a piecewise $C^{1}$ curve contained entirely inside of $\mathcal{F}(p)$. On the other hand, as shown above, $\mathscr{A}(p;\E)$ must be dense in $W^{u}(p)$.

We thus conclude that $\wt{\mathcal{F}}(p)$ is dense in $W^{u}(p)$, and therefore $\mathcal{F}(\bar{p})$ is dense in $Q^{u}(p)$. But this is absurd unless $\mathcal{F}(\bar{p}) = Q^{u}$: let $U$ be a neighborhood of $\bar{p}$ on which $g^{-T}$ acts as an exponential contraction and such that the foliation $\mathcal{F}$ can be trivialized as slices $\R^{k} \times\{a\}$. Two different slices of the foliation in $U$ have the property that they cannot be connected by a $C^{1}$ curve $\sigma$ lying entirely in $U$. If $\mathcal{F}(\bar{p})$ is dense in $Q^{u}(p)$, we can find some slice of $\mathcal{F}$ in $U$ that does not pass through $\bar{p}$ and a unit speed curve $\sigma:[0,\l] \rightarrow Q^{u}(p)$ with $\sigma(0) = \bar{p}$ and $\sigma(\l) = q \in U$ lying in a different slice. Consider $g^{-kT} \circ \sigma$ for $k > 0$ large. By the definition of the unstable leaf, for $k$ large enough the entire curve $g^{-kT} \circ \sigma$ is contained inside of $U$, and by the $g^{t}$-invariance of $\mathcal{F}$, $g^{-kT} \circ \sigma$ is always contained inside of $\mathcal{F}(\bar{p})$. This implies that $g^{-kT}(\sigma(\l))$ lies in the slice through $\bar{p}$ of $\mathcal{F}$ in $U$. The contraction property of $g^{-T}$ on $U$ implies that $g^{-kT}(U) \subset U$ is an open connected subset containing $\bar{p}$, and the foliation can be trivialized on this open subset, so we can join $g^{-kT}(\sigma(\l))$ to $\bar{p}$ by a curve $\tau$ lying entirely inside of $g^{-kT}(U)$. But then $g^{kT} \circ \tau$ is a curve joining $\bar{p}$ to $\sigma(\l)$ lying entirely inside of the slice of $\mathcal{F}$ through $\bar{p}$ inside of $U$, which is the contradiction that completes the proof.
\end{proof}
We isolate a corollary of the arguments in the proof of Lemma \ref{chypirr} which is of independent interest, obtained by taking $\E = H^{u}$ and ignoring the assumptions on extremal Lyapunov exponents in the above argument. 
\begin{cor}\label{acc}
Let $g^{t}$ be the geodesic flow of a closed negatively curved manifold $M$. Suppose that there is a dominated splitting $E^{u} = H^{u} \oplus V^{u}$, that $E^{u}$ and $H^{u}$ are $\beta$-H\"older continuous, and $Dg^{t}|H^{u}$ is $\beta$-fiber bunched. Then $\mathscr{A}(p;H^{u})$ is dense in $W^{u}(p)$ for every $p \in SM$. 
\end{cor}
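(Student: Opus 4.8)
The plan is to re-run the first half of the proof of Lemma~\ref{chypirr} --- the part establishing density of $\mathscr{A}(p;\E)$ in $W^u(p)$ --- with $\E$ taken to be the horizontal bundle $H^u$ itself, and to observe that the equal-extremal-exponent hypothesis enters that half only through the single appeal to Lemma~\ref{contsub}, which replaces a measurable invariant subbundle by a continuous, holonomy-invariant one. When $\E = H^u$ this appeal is vacuous: under the hypotheses of the corollary $H^u$ is already continuous (being $\beta$-H\"older), it is $g^t$-invariant by definition of the dominated splitting $E^u = H^u\oplus V^u$, and $\beta$-fiber bunching of $Dg^t|H^u$ yields, via Proposition~\ref{existuhol} and the center-unstable/center-stable extension described in Section~\ref{amenred}, an unstable holonomy $h^u$ and a center-stable holonomy $h^{cs}$ for this cocycle, with respect to which $H^u$ is trivially invariant. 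Moreover Lemma~\ref{wcs} applies directly: the $W^{cs}$-holonomy $L_{xy}$ between unstable leaves is differentiable along $C^1$ curves tangent to $H^u$, with derivative $h^{cs}$.

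With these ingredients in hand I would follow the boundary argument of Lemma~\ref{chypirr} unchanged. Passing to the universal cover $S\wt{M}$, the $H^u$-accessibility classes $\mathscr{A}(\cdot;H^u)$ are equivariant under $g^t$ and under $\Gamma := \pi_1(M)$. Using the projection charts $\pi_x:\wt{W}^u(x)\to\p\wt{M}\setminus\{x_-\}$ and the fact from Lemma~\ref{wcs} that the transition maps $\pi_y^{-1}\circ\pi_x$ are differentiable along curves tangent to $H^u$ with derivative $h^{cs}$, together with $h^{cs}$-invariance of $H^u$, one obtains $(\pi_y^{-1}\circ\pi_x)(\mathscr{A}(p;H^u)) = \mathscr{A}(\pi_y^{-1}(\pi_x(p));H^u)$, so the classes descend to well-defined subsets $\mathscr{A}(\xi)\subset\p\wt{M}$ satisfying $\g(\mathscr{A}(\xi)) = \mathscr{A}(\g\xi)$ for $\g\in\Gamma$.

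It remains to show each $\mathscr{A}(\xi)$ is dense in $\p\wt{M}$, and here I would reproduce the two sub-steps verbatim: first, for a sufficiently small neighborhood $A$ of the image point $x$ of $\xi$ in some leaf $\wt{W}^u(x)$, the class $\mathscr{A}(y;H^u)$ meets the topological boundary $\p A$ for every $y\in A$ --- proved in a chart by reduction to the two-dimensional statement that an ODE $y'=f(x)$ with $f$ continuous and uniformly bounded on a small cube has a $C^1$ solution through the origin that either crosses the cube or exits it, which follows from the Cauchy--Peano theorem and concatenation, the uniform bound on $f$ coming from continuity of $H^u$; second, choosing a hyperbolic $\g\in\Gamma$ with attracting fixed point in a prescribed open set $U$ and repelling fixed point in $A$ --- possible since endpoints of axes are dense in $\p\wt{M}\times\p\wt{M}$ --- the north--south dynamics of $\g$ pushes a point of $\mathscr{A}(\xi)\cap\p A$ into $U$. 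Density of $\mathscr{A}(\xi)$ in $\p\wt{M}$ translates back to density of $\mathscr{A}(p;H^u)$ in $W^u(p)$.

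I do not expect a genuine obstacle: the point of the corollary is precisely that density of $\mathscr{A}(p;\E)$ in the proof of Lemma~\ref{chypirr} is obtained \emph{before} any use of the connection $\nabla$ from Lemma~\ref{connect}, of the integrability/foliation argument, or of the periodic-orbit contraction argument --- those serve only to rule out a \emph{proper} invariant $\E$ --- and that the one prerequisite for the density argument, namely a continuous holonomy-invariant $\E$, holds automatically for $\E = H^u$. The only thing to verify carefully is that every invocation of the extremal-exponent equality in that proof is indeed routed through Lemma~\ref{contsub}, which it is.
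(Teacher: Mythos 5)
Your proposal is correct and is essentially the paper's own proof: Corollary~\ref{acc} is obtained exactly by taking $\E = H^{u}$ in the density half of the proof of Lemma~\ref{chypirr}, where the equal-exponent hypothesis enters only via Lemma~\ref{contsub} and is vacuous since $H^{u}$ is already continuous and invariant under the holonomies furnished by fiber bunching and Lemma~\ref{wcs}. Your verification that the remaining steps (equivariance, the Cauchy--Peano boundary-crossing argument, and the north--south dynamics) never use the exponent assumption matches the paper's reasoning.
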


\begin{rem}
Given two small disjoint open sets $U$ and $V$ in $W^{u}(p)$, the piecewise $C^{1}$ curve $\gamma$ constructed in Corollary \ref{acc} which starts in $U$ and ends in $V$ will typically take a long, winding route through $W^{u}(p)$ which increases exponentially in length as $U$ and $V$ shrink in size, regardless of how close $U$ and $V$ are in $W^{u}(p)$. Thus it is not immediately clear whether the conclusion in Corollary \ref{acc} can be improved to $\mathscr{A}(p;H^{u}) = W^{u}(p)$. 
\end{rem}

By combining Lemma \ref{amen}, Lemma \ref{confcover}, and Lemma \ref{chypirr}, we see that $Dg^{t}|H^{u}$ preserves a conformal structure which we represent by a Riemannian metric $\tau$ with associated multiplicative cocycle $\psi^{t}$. We will use this conformal structure together with the charts $\{\Psi_{p}\}_{p \in SM}$ given by Assumption (2) of Theorem \ref{weakchyp} to complete the proof of the theorem. Let $r > 0$ given by Assumption (2)  of Theorem \ref{weakchyp}. 

\begin{lem}\label{reg}
Let $p \in SM$, $q \in W^{cs}_{r}(p)$. In the coordinates on $W^{u}_{r}(p)$ and $W^{u}_{r}(q)$ given by $\Psi_{p}$ and $\Psi_{q}$ respectively the center stable holonomy $W^{u}_{r}(p) \rightarrow W^{u}_{r}(q)$ is a projective automorphism of $G_{\mathbb{K}}^{m}$. Consequently the Anosov splitting of $g^{t}$ is $C^{1}$. 
\end{lem}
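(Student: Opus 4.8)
### Proof plan for Lemma \ref{reg}

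The plan is to combine the two holonomy descriptions already available—Lemma \ref{wcs}, which tells us that the center-stable holonomy $L_{pq}\colon W^u_r(p)\to W^u_r(q)$ is differentiable along $H^u$ with derivative the linear holonomy $h^{cs}$, and the invariant conformal structure $\tau$ on $H^u$ produced by Lemmas \ref{amen}, \ref{confcover}, \ref{chypirr}—with the rigidity of conformal maps of $G_{\mathbb K}^m$ recorded in the Introduction (following \cite{P2}). First I would transport everything to the model group: using the charts $\Psi_p,\Psi_q$ from Assumption (2) of Theorem \ref{weakchyp}, set $\Phi := \Psi_q\circ L_{pq}\circ \Psi_p^{-1}$, a homeomorphism between open subsets of $G_{\mathbb K}^m$. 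By Assumption (2), $D\Psi_p$ carries $H^u$ to the left-invariant distribution $\mathcal T^m_{\mathbb K}$, and by Lemma \ref{wcs} the map $L_{pq}$ sends $C^1$ curves tangent to $H^u$ to $C^1$ curves tangent to $H^u$; hence $\Phi$ sends $C^1$ curves tangent to $\mathcal T^m_{\mathbb K}$ to $C^1$ curves tangent to $\mathcal T^m_{\mathbb K}$, i.e. $\Phi$ is (at least formally) a contact map for the Carnot--Carathéodory structure on $G^m_{\mathbb K}$ defined by $\mathcal T^m_{\mathbb K}$. The key point to extract is that its horizontal derivative $D\Phi|\mathcal T^m_{\mathbb K}$, which equals the push-forward of $h^{cs}$ under the charts, is a \emph{conformal} linear map of $\mathcal T^m_{\mathbb K}$ with respect to the subriemannian metric: this is exactly where the invariant conformal structure $\tau$ on $H^u$ enters, since $h^{cs}$ preserves $\tau$ (holonomy invariance of $\tau$) and the charts $\Psi_p$ may, after a preliminary bounded conformal adjustment fiberwise, be taken to identify $\tau$ with the standard left-invariant conformal structure on $\mathcal T^m_{\mathbb K}$.

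The second step is to upgrade this infinitesimal conformality to genuine $1$-quasiconformality of $\Phi$ as a map of metric spaces, so that Pansu's and Capogna's rigidity theorem applies. Here one uses that $\Phi$ is a contact homeomorphism which is differentiable along horizontal curves with uniformly bounded, conformal horizontal derivative; the standard machinery for quasiconformal maps of Carnot groups (absolute continuity on lines, Pansu differentiability a.e., and the fact that a contact map with conformal horizontal derivative is $1$-quasiconformal) then gives that $\Phi$ is $1$-quasiconformal, hence by the rigidity of conformal maps of $G^m_{\mathbb K}$—all conformal maps between domains are restrictions of projective automorphisms of the group—$\Phi$ is the restriction of a projective automorphism of $G^m_{\mathbb K}$. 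Translating back through the charts, $L_{pq}$ is a projective automorphism of $G^m_{\mathbb K}$ in the $\Psi_p,\Psi_q$ coordinates, which is the first assertion.

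For the consequence that the Anosov splitting is $C^1$: projective automorphisms of $G^m_{\mathbb K}$ are real-analytic, so $L_{pq}$ is $C^1$ (indeed smooth) in the charts, and in particular its derivative depends continuously—and transversally, as $q$ varies over $W^{cs}_r(p)$—on the base points. The center-stable holonomy being uniformly $C^1$ along $W^u$ with derivatives varying continuously in the transverse direction is precisely the regularity that forces $E^{cs}$, hence also $E^u$ by the symmetric argument with the flip map, to be $C^1$ as distributions on $SM$; one invokes the characterization of $C^1$ Anosov splittings via $C^1$ holonomies (as used in the proof of Lemma \ref{hypirr}, where the Kanai connection played this role) to conclude. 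The main obstacle I anticipate is the regularity bookkeeping in the second step: Lemma \ref{wcs} only gives differentiability of $L_{pq}$ along $H^u$-curves, not a priori the full package (absolute continuity, a.e. Pansu differentiability, Sobolev regularity) that the quasiconformal rigidity theorems of Pansu and Capogna take as input, so the work is in verifying that a contact homeomorphism which is horizontally differentiable everywhere with bounded conformal horizontal derivative does meet those hypotheses—this requires care precisely because $H^u$ is only Hölder (exponent $<1/2$) in the transverse direction, so the map $\Phi$ is a priori merely Hölder, not Lipschitz, off the horizontal directions.
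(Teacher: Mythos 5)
Your proposal follows essentially the same route as the paper: conjugate the center-stable holonomy by the charts $\Psi_{p},\Psi_{q}$, use Lemma \ref{wcs} to get differentiability along $\mathcal{T}^{m}_{\mathbb{K}}$ with derivative $h^{cs}$, deduce Pansu differentiability and conformality of the horizontal derivative from the invariant conformal structure $\tau$, invoke the Capogna--Pansu rigidity of $1$-quasiconformal maps to conclude the map is a projective automorphism, and then derive the $C^{1}$ splitting. The only minor divergences are at the end, where the paper concludes via Journ\'e's lemma (since $E^{cs}$ is $C^{1}$ along $W^{u}$ and $E^{s}$ is smooth along $W^{cs}$) rather than the appeal you make to the mechanism of Lemma \ref{hypirr} (where the $C^{1}$ splitting was an input, not an output), and at the Pansu-differentiability step, which the paper asserts directly from horizontal differentiability while you rightly flag it as the place where the quasiconformal machinery's hypotheses need checking.
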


\begin{proof}
Let $p \in SM$ and $q \in W^{cs}_{r}(p)$. We claim that the center stable holonomy map $\varphi: W^{u}_{r}(p) \rightarrow W^{u}_{r}(q)$ is $C^{1}$. Consider the map $F := \Psi_{q} \circ \varphi \circ \Psi_{p}^{-1}$ defined on a neighborhood of the identity in $G_{\mathbb{K}}^{m}$. By Lemma \ref{wcs} and the assumption that the charts $\Psi_{p}$ map $H^{u}$ to the left-invariant distribution $\mathcal{T}^{m}_{\mathbb{K}}$,  $F$ maps $C^{1}$ curves tangent to $\mathcal{T}^{m}_{\mathbb{K}}$ to $C^{1}$ curves tangent to $\mathcal{H}$. It follows that $F$ is differentiable on the distribution $\mathcal{T}^{m}_{\mathbb{K}}$, with derivative given by the center stable holonomy $h^{cs}$ in these local coordinates. 

This implies that $F$ is Pansu differentiable as a map from $G_{\mathbb{K}}^{m}$ into itself \cite{P2}. The Pansu derivative at each point of $G$ is a homomorphism $DF_{p}^{\mathcal{H}}: G_{\mathbb{K}}^{m} \rightarrow G_{\mathbb{K}}^{m}$ uniquely determined by the derivative action of $F$ on $\mathcal{T}_{\mathbb{K}}^{m}$. Furthermore, since the center stable holonomy preserves the conformal structure $\tau$ on $H^{u}$, this derivative action on $\mathcal{T}_{\mathbb{K}}^{m}$ is conformal in the induced norm of $\tau$ on the horizontal distribution $\mathcal{T}_{\mathbb{K}}^{m}$. This implies that $F$ is 1-quasiconformal in the Carnot-Caratheodory metric on $G_{\mathbb{K}}^{m}$  associated to the metric $\tau$ on the horizontal distribution. But all such 1-quasiconformal maps on any domain in $G$ are given by projective automorphisms of $G_{\mathbb{K}}^{m}$: for $\mathbb{K} = \C$  this is a theorem of Capogna \cite{Ca}, and for $\mathbb{K} = \mathbb{H}$ or $\mathbb{O}$, Pansu showed that any quasiconformal map of $G_{\mathbb{K}}$ into itself is a projective automorphism \cite{P2}. Thus $\Psi_{q} \circ \varphi \circ \Psi_{p}^{-1}$ is smooth and so since $\Psi_{q}$ and $\Psi_{p}$ are $C^{1}$, we conclude that $\varphi$ is $C^{1}$.

This proves that the center stable holonomy between unstable leaves is $C^{1}$. It follows that the center stable bundle $E^{cs}$ is $C^{1}$ along the unstable foliation, and since $E^{s}$ is smooth along the $W^{cs}$ foliation, it follows from Journe's lemma \cite{J} that $E^{s}$ is $C^{1}$. Applying the same reasoning with the roles of $E^{s}$ and $E^{u}$ reversed and using  the flip map $p \rightarrow -p$ for the geodesic flow, we can interchange the role of $E^{s}$ and $E^{u}$ and apply all of the results of this section to $E^{u}$ as well. It follows that $E^{u}$ is $C^{1}$ as well and therefore the Anosov splitting of $g^{t}$ is $C^{1}$. 
\end{proof}

We can find a finite collection of points $p_{1},\dots,p_{k} \in S\wt{M}$ such that the image neighborhoods $U_{i}:=\pi_{p_{i}}(\wt{W}^{u}_{r}(p_{i})) \subset \p \wt{M}$ cover $\p \wt{M}$. For each $U_{i}$ we have a map to the one point compactification $G_{\mathbb{K}}^{m} \cup \{\infty\} = \p H^{m}_{\mathbb{K}}$ given by $\Psi_{p_{i}} \circ \pi_{p_{i}}^{-1}$. By Lemma \ref{reg} the transition maps $\Psi_{p_{j}} \circ \pi_{p_{j}}^{-1} \circ (\Psi_{p_{i}} \circ \pi_{p_{i}}^{-1})^{-1}$ are given by projective automorphisms of $G_{\mathbb{K}}^{m}$. Hence the visual boundary $\p \wt{M}$ of $\wt{M}$ carries a natural $C^{1}$ structure for which $\Gamma = \pi_{1}(M)$ acts by $C^{1}$ maps. In fact, in the coordinates $\Psi_{p_{i}} \circ \pi_{p_{i}}^{-1}$ on $U_{i}$, $\Gamma$ acts by projective automorphisms since $\Gamma$ leaves invariant the lift of the Riemannian metric $\wt{\tau}$ on $H^{u}$ and so we can apply the same rigidity theorems used in the proof of Theorem \ref{reg} for conformal maps on $G_{\mathbb{K}}^{m}$.

We conclude that the natural map $F:\p \wt{M} \rightarrow \p H^{m}_{\mathbb{K}}$ given by the quasi-isometry between the lift $\wt{d}$ of the metric $d$ to $\wt{M}$ and the symmetric metric on $H^{m}_{\mathbb{K}}$ is $C^{1}$ and $\Gamma$-equivariant. Since $F$ is a diffeomorphism it maps the Lebesgue measure class on $\p \wt{M}$ to the Lebesgue measure class on $\p H^{m}_{\mathbb{K}}$. Corollary 4.6 of \cite{Ham2} implies that there is a $C^{1}$ time-preserving (up to scaling) conjugacy between the geodesic flows of $(M,d)$ and $(M,d_{\mathbb{K}})$, and therefore by the minimal entropy rigidity theorem of Besson-Courtois-Gallot \cite{BCG1} the metric $d$ on $M$ is homothetic to the symmetric metric $d_{\mathbb{K}}$. 
\end{section}

\begin{section}{Proof of Theorem 1.4}\label{regproof}
In this final section we assume that $\mathbb{K} = \C$ and that $d$ is a metric in the neighborhood $\mathcal{U}$ of $d_{\C}$ described at the beginning of Section \ref{thirdthm}. We assume that there is a $g^{t}$-invariant ergodic, fully supported probability measure $\mu$ on $SM$ with local product structure such that $\la_{+}(Dg^{t}|H^{u},\mu) = \la_{-}(Dg^{t}|H^{u},\mu)$.  We will show that if $H^{u}$ has $C^{1}$ regularity along $W^{u}$ leaves with H\"older continuous derivatives then there is an $r > 0$ such that we can find $C^{1}$ charts $\Psi_{p}: W^{u}_{r}(p) \rightarrow G_{\C}^{m}$ which map $H^{u}$ to $\mathcal{T}_{\C}^{m}$. Hence by Theorem \ref{weakchyp} we conclude that $(M,d)$ is homothetic to $(M,d_{\mathbb{K}})$. 

We first note that the $C^{1}$ regularity of $H^{u}$ along $W^{u}$ implies $C^{1}$ regularity for the unstable holonomies of $W^{u}$, 

\begin{lem}\label{reghor}
On a fixed unstable leaf $W^{u}(p)$, the unstable holonomy $(x,y) \rightarrow h^{u}_{xy}$, $x,y \in W^{u}(p)$ is $C^{1}$ as a function of $x$ and $y$. The derivative of the unstable holonomy is uniformly H\"older continuous in $x$ and $y$.  
\end{lem}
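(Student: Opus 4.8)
The plan is to prove the lemma by realizing $h^{u}$ as an explicit limit of maps that are already $C^{1}$ with H\"older-continuous derivative along $W^{u}$, and then showing that this limit converges in the $C^{1}$ topology along $W^{u}$ with a uniform H\"older bound on the derivatives. This is a one-jet refinement of the construction of holonomies in Proposition \ref{existuhol}; the new input is precisely the hypothesis of Theorem \ref{regchyp} that $H^{u}$ is $C^{1}$ along $W^{u}$ with H\"older-continuous derivative.

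First I would fix a small $r>0$ and, using that $H^{u}$ restricted to $W^{u}$ leaves is $C^{1}$ with H\"older-continuous derivative and that $SM$ is compact, choose on each ball $W^{u}_{r}(p)$ a frame of $H^{u}$ depending $C^{1}$ on the base point, with H\"older-continuous derivative and uniform constants. Such a frame yields a family of linear identifications $I_{xy}\colon H^{u}_{x}\to H^{u}_{y}$ for $x,y\in W^{u}_{r}(p)$ with $I_{xx}=\mathrm{Id}$, $I_{xy}(H^{u}_{x})=H^{u}_{y}$, and $(x,y)\mapsto I_{xy}$ of class $C^{1}$ with H\"older-continuous derivative. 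Since $g^{t}$ acts smoothly on $W^{u}$ leaves and $H^{u}$ is $C^{1}$ with H\"older derivative along $W^{u}$, the cocycle $A^{t}:=Dg^{t}|H^{u}$, read in these frames, is likewise $C^{1}$ with H\"older derivative along $W^{u}$, with growth controlled by the bounds recorded at the start of Section \ref{thirdthm}. By the same argument as in Proposition \ref{existuhol}, adapted to flows exactly as in Lemma \ref{wcs}, for $x\in W^{u}(p)$ and $y\in W^{u}_{r}(x)$ the unstable holonomy of $Dg^{t}|H^{u}$ is the limit
\[
h^{u}_{xy}=\lim_{n\to\infty}(A^{-n}_{y})^{-1}\circ I_{g^{-n}x,\,g^{-n}y}\circ A^{-n}_{x},
\]
each term of which is $C^{1}$ with H\"older derivative in $(x,y)$ along $W^{u}(p)$. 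By the cocycle relation $h^{u}_{xy}=h^{u}_{x'y}\circ(h^{u}_{x'x})^{-1}$ and chaining through a finite string of nearby points it suffices to treat $(x,y)$ in such a local ball.

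Next I would show the sequence of partial maps above is Cauchy in the $C^{1}$ norm along $W^{u}$, with uniformly H\"older derivatives. Following the telescoping estimate in the proof of Proposition \ref{existuhol}, the difference of consecutive terms is $(A^{-n}_{y})^{-1}\circ R_{n}\circ A^{-n}_{x}$, where $R_{n}=(A^{-1}_{g^{-n}y})^{-1}\circ I_{g^{-n-1}x,\,g^{-n-1}y}\circ A^{-1}_{g^{-n}x}-I_{g^{-n}x,\,g^{-n}y}$ has norm at most $Cd(g^{-n}x,g^{-n}y)^{\beta}$; the product $\|(A^{-n}_{y})^{-1}\|\cdot\|A^{-n}_{x}\|$ grows only like $e^{(b-a)n}$, so $\beta$-fiber bunching of $Dg^{t}|H^{u}$ together with the exponential contraction of $W^{u}$ under $g^{-t}$ makes the series of norms converge geometrically, reproving $C^{0}$ convergence. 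To control first derivatives one runs the identical estimate on the $1$-jet: differentiating in the base direction, the derivative of $A^{-n}_{x}$ is a sum of $n$ terms, in each of which the extra factor coming from the chain rule for $x\mapsto g^{-k}x$ carries the base contraction $\lesssim e^{-ak}$, so $\|D_{x}A^{-n}_{x}\|\lesssim e^{-an}$, of the same order as $\|A^{-n}_{x}\|$ (and similarly for $(A^{-n}_{y})^{-1}$), while the derivatives of $A^{\pm1}$ and of $I$ are H\"older by hypothesis and hence uniformly bounded. Therefore the derivative of the $n$-th difference is again bounded by a geometric series governed by the same $\beta$-fiber-bunching ratio; repeating the estimate with H\"older seminorms in place of sup norms shows the derivatives of the partial sums are uniformly H\"older and converge uniformly. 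Hence $h^{u}$ is $C^{1}$ in $(x,y)$ along $W^{u}(p)$ with H\"older-continuous derivative.

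Finally, uniformity of the constants in $x,y$ and over the leaf $W^{u}(p)$ follows from the $g^{t}$-equivariance of the holonomy (property (3) of Definition \ref{defuhol}) together with compactness of $SM$ and the cocompact $\Gamma$-action on $S\wt{M}$: any pair of points on $W^{u}(p)$ can be moved by some $g^{t}$ and some deck transformation into a fixed fundamental domain, where the estimates above are uniform. (One could alternatively deduce the bare $C^{1}$ statement from Lemma \ref{connect}, since $h^{u}$ on $H^{u}$ is conjugate, via the isomorphism $H^{u}\cong E^{u}/V^{u}$ which is $C^{1}$ with H\"older derivative along $W^{u}$, to the holonomy $\bar h^{u}$ on $E^{u}/V^{u}$, which is $C^{1}$ along $W^{u}$; but upgrading $\bar h^{u}$ to have H\"older derivative still requires the jet estimate above, so the direct argument is cleaner.) The main obstacle is exactly this jet-bundle contraction estimate: one must organize the telescoping sum so that the $\beta$-fiber-bunching factor strictly dominates the growth of the first derivatives of the iterated cocycle, using that on $\mathcal{U}$ one may take $\beta$ close to $\tfrac12$ and $b/a$ close to $1$ so the relevant bunching inequality closes even after the derivative loss — and the assumed regularity of $H^{u}$ along $W^{u}$ is precisely what makes each finite composition $C^{1}$ with H\"older derivative in the first place, so that the estimate can be carried out at all.
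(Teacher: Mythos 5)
Your argument is essentially correct, but it takes a different route from the paper. The paper's proof is soft and short: since $H^{u}$ is $C^{1}$ along $W^{u}$ and $V^{u}$ (hence $E^{u}/V^{u}$) is smooth along $W^{u}$, the projection $H^{u}\rightarrow E^{u}/V^{u}$ is a $C^{1}$ (with H\"older derivative) bundle isomorphism along leaves; the unstable holonomy of $Dg^{t}|H^{u}$ is conjugate under this isomorphism to the holonomy $\bar h^{u}$ of the induced cocycle on $E^{u}/V^{u}$, which is already known from Lemma \ref{connect} to be $C^{1}$ along $W^{u}$ leaves (there the data are smooth along leaves, so the regularity of $\bar h^{u}$ comes essentially for free), and the stated regularity of $h^{u}$ follows by conjugation. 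You instead run the one-jet telescoping estimate directly on $H^{u}$, which is self-contained and makes the constants explicit, at the cost of the bookkeeping you acknowledge; your parenthetical observation that the quotient-bundle route still needs a jet estimate to get the H\"older modulus of the derivative is a fair point about what Lemma \ref{connect} leaves implicit. One small imprecision in your estimate: in the term of $D\bigl[(A^{-n}_{y})^{-1}\circ R_{n}\circ A^{-n}_{x}\bigr]$ where the derivative falls on $R_{n}$, you lose the factor $d(g^{-n}x,g^{-n}y)^{\beta}$ and gain only the chain-rule contraction $e^{-an}$, so that term is governed not by the $\beta$-fiber-bunching ratio but by the inequality $b<2a$ (a ``1-bunching'' along the leaf); this holds on $\mathcal{U}$ since $a,b\rightarrow 1$, and your closing remark about the derivative loss and $b/a$ close to $1$ effectively covers it, but the statement that the same $\beta$-bunching ratio controls the whole derivative series should be corrected accordingly.
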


\begin{proof}
Since $H^{u}$ is $C^{1}$ along $W^{u}$ leaves and $E^{u}/V^{u}$ is smooth along these leaves, the projection $H^{u} \rightarrow E^{u}/V^{u}$ is $C^{1}$. As remarked in Lemma \ref{connect}, the unstable holonomy of the action of $Dg^{t}$ on $E^{u}/V^{u}$ is induced by the unstable holonomy of $Dg^{t}|H^{u}$ by this projection. Since the unstable holonomy on $E^{u}/V^{u}$ is $C^{1}$, this implies that the unstable holonomy for $H^{u}$ is $C^{1}$. 
\end{proof}

By applying the arguments of Section \ref{thirdthm} prior to Lemma \ref{reg} which do not use the existence of the charts $\Psi_{p}$, we conclude that there is a Riemannian metric $\tau$ on $H^{u}$ whose conformal class is invariant under stable and unstable holonomies as well as the action of $Dg^{t}$. By Lemma \ref{reghor}, we can choose a representative of the conformal class of $\tau$ which is $C^{1}$ along $W^{u}$. We will fix this representative from now on. We let $\psi^{t}$ be the multiplicative $\R$-cocycle such that $(g^{t})^{*}\tau = \psi^{t} \tau$. 

We claim we may assume that the line bundle $V^{u}$ is orientable. If $V^{u}$ is not orientable, pass to a double cover of $SM$ in which $V^{u}$ is orientable. This corresponds to the geodesic flow on a Riemannian double cover $M^{*}$ of $M$. If we prove that $M^{*}$ is isometric to a complex hyperbolic manifold, it then follows immediately that $M$ is as well. Hence it suffices to assume $V^{u}$ is orientable.

Extend $\tau$ to a Riemannian metric on $E^{u}$ by setting $V^{u}$ and $H^{u}$ to be orthogonal and taking some continuous section $Z$ of $V^{u}$ which is smooth along $W^{u}$ leaves to define the unit length on $V^{u}$. $\tau$ remains $C^{1}$ since $H^{u}$ is a $C^{1}$ subbundle of $E^{u}$. Let $\alpha$ be the 1-form on $W^{u}$ defined by taking the inner product using $\tau$ with $Z$. It's clear that $\ker \alpha \cap E^{u} = H^{u}$ and that $\alpha$ is $C^{1}$. Since $V^{u}$ and $H^{u}$ are $Dg^{t}$-invariant, for each $t \in \R$ there is some $C^{1}$ (along $W^{u}$ leaves) function $\varphi^{t}$ such that $(g^{t})^{*}\alpha = \varphi^{t}\alpha$. $\varphi^{t}$ is a multiplicative $\R$-cocycle.

\begin{lem}\label{nondeg}
$d\alpha|H^{u}$ is nondegenerate on all of $SM$.  
\end{lem}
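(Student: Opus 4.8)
The plan is to study the continuous $2$-form $\omega := d\alpha|_{H^{u}}$ on the bundle $H^{u}$ over $SM$ and show that $\omega_{p}$ is nondegenerate for every $p$, i.e.\ that $\mathrm{rank}(\omega_{p}) = \dim H^{u} = m-2$ everywhere. (Here $\omega$ is continuous along $W^{u}$ leaves because $\alpha$ is $C^{1}$ along $W^{u}$, and it is continuous on $SM$ because $\tau$ is continuous on $SM$ with the uniform regularity along $W^{u}$ furnished by Lemma~\ref{reghor}; lower semicontinuity of $\mathrm{rank}\,\omega$ on $SM$ is all that will be used.)

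First I would record the transformation law. Since $(g^{t})^{*}\alpha = \varphi^{t}\alpha$ and $\alpha$ annihilates $H^{u}$, differentiating along $W^{u}$ leaves kills the $d\varphi^{t}\wedge\alpha$ term and gives $(g^{t})^{*}\omega = \varphi^{t}\omega$. Hence the radical $\mathrm{rad}(\omega_{p})$ is $Dg^{t}$-invariant and $p\mapsto\mathrm{rank}(\omega_{p})$ is a $g^{t}$-invariant function. Moreover, by Lemma~\ref{wcs} the $W^{cs}$-holonomy $L_{xy}\colon W^{u}_{r}(x)\to W^{u}_{r}(y)$ is differentiable along $H^{u}$ with derivative the center stable holonomy $h^{cs}$; a dimension-transversality count shows $L_{xy}$ also sends $W^{vu}$-leaves to $W^{vu}$-leaves, hence preserves the line field $V^{u}$. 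Computing $\omega$ via $\omega_{z}(u,v) = -\alpha_{z}([U,V]_{z})$ for $C^{1}$ sections $U,V$ of $H^{u}$, one then obtains that $L_{xy}^{*}\omega$ is a positive multiple of $\omega$, so $\mathrm{rank}\,\omega$ is also constant along center stable leaves.

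Now $\mathrm{rank}\,\omega$ is lower semicontinuous, $g^{t}$-invariant and $W^{cs}$-holonomy invariant. Let $k$ be its maximal value. The set $\{\mathrm{rank}\,\omega = k\}$ is open, nonempty and $g^{t}$-invariant, hence dense by topological transitivity of the geodesic flow; its complement is closed, nowhere dense and saturated by center stable leaves, so if it were nonempty it would contain a full center stable leaf, which is dense in $SM$ — a contradiction. Thus $\mathrm{rank}\,\omega\equiv k$, and $N:=\mathrm{rad}(\omega)$ is a continuous $Dg^{t}$-invariant subbundle of $H^{u}$ of rank $(m-2)-k$. If $0<k<m-2$, then $N$ is a proper nonzero measurable $g^{t}$-invariant subbundle of $H^{u}$, contradicting Lemma~\ref{chypirr}. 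If $k=0$, then $\omega\equiv 0$, so the $C^{1}$ distribution $H^{u}$ is uniquely integrable along each $W^{u}$ leaf by the Frobenius theorem (valid for $C^{1}$ distributions, as in the proof of Lemma~\ref{hypirr}); the resulting foliation $W^{hu}$ is $g^{t}$-invariant and, by Lemma~\ref{wcs}, invariant under $W^{cs}$-holonomy, so exactly as in Lemma~\ref{hypirr} it descends through the boundary charts $\pi_{x}$ to a continuous $\pi_{1}(M)$-invariant foliation of $\partial\widetilde{M}$ with leaves of dimension $m-2$; since $2\le m-2<m-1=\dim\partial\widetilde{M}$, this contradicts the triviality of $\pi_{1}(M)$-invariant continuous foliations of $\partial\widetilde{M}$ proved in Section~4 of \cite{Ham1}. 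Hence $k=m-2$ and $d\alpha|_{H^{u}}$ is nondegenerate at every point of $SM$.

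The main obstacle is the claim that $L_{xy}^{*}\omega$ is a positive multiple of $\omega$: because $L_{xy}$ is differentiable only along $H^{u}$, one cannot simply pull back the $2$-form $d\alpha$. The point will be to work with $\omega_{z}(u,v) = -\alpha_{z}([U,V]_{z})$ and track how $L_{xy}$ interacts with Lie brackets of $C^{1}$ $H^{u}$-vector fields, using the derivative identity $D_{z}L_{xy}|H^{u} = h^{cs}$ from Lemma~\ref{wcs} together with the $Dg^{t}$-invariance and $L_{xy}$-invariance of $V^{u}$. Verifying this carefully — and likewise confirming the continuity of $\omega$ on $SM$ — is where the real work lies; the rest of the argument is formal manipulation of the invariance properties together with Lemmas~\ref{chypirr} and \ref{wcs} and the boundary rigidity statement from \cite{Ham1}.
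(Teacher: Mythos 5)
Your argument hinges on the claim that the rank of $\omega = d\alpha|H^{u}$ is constant along center stable leaves, i.e.\ that $L_{xy}$ carries the radical of $\omega_{z}$ to the radical of $\omega_{L_{xy}(z)}$, and this is exactly the step you cannot carry out with the regularity that is available at this stage. Lemma \ref{wcs} gives only that $L_{xy}$ is differentiable \emph{along} $H^{u}$, with derivative $h^{cs}$, and $h^{cs}$ is merely H\"older continuous in the base point. To compare $\alpha([U,V])$ at $z$ with $\alpha([\,(L_{xy})_{*}U,(L_{xy})_{*}V\,])$ at $L_{xy}(z)$ you must differentiate the identity $D_{z}L_{xy}|H^{u}=h^{cs}$ in $H^{u}$-directions, i.e.\ you need second-order control of $L_{xy}$ along $H^{u}$ (equivalently, differentiability of $h^{cs}$ in the footpoint); without it the pushed-forward fields need not even be $C^{1}$, so their bracket is undefined and no relation between the two radicals follows. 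Such regularity of the center stable holonomy is essentially the \emph{conclusion} the section is driving toward (Lemma \ref{reg}), so assuming it here is circular. The auxiliary claim that a ``dimension-transversality count'' shows $L_{xy}$ maps $W^{vu}$-leaves to $W^{vu}$-leaves is also unjustified: that would amount to a joint integrability statement for $V^{u}$ and $E^{cs}$ which is not known for a perturbed metric. Once the $W^{cs}$-invariance of the rank is removed, your topological argument collapses: lower semicontinuity plus transitivity only gives that the maximal-rank set is open and dense, and a closed, nowhere dense, flow-invariant exceptional set (a periodic orbit, say) cannot be excluded by transitivity alone.

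The paper avoids this entirely by using the measure $\mu$ twice. Ergodicity makes the corank of $\omega$ constant $\mu$-a.e.; an intermediate corank contradicts Lemma \ref{chypirr} (as in your proposal), and corank $m-2$ forces $\omega\equiv 0$ by full support and continuity, which is ruled out via Corollary \ref{acc}: the $C^{1}$ Frobenius foliation tangent to $H^{u}$ would have every leaf dense in $W^{u}(p)$ at a periodic point $p$, impossible by a plaque/contraction argument. (Your alternative for this case --- descending the $H^{u}$-foliation through the charts $\pi_{x}$ and invoking the triviality of $\pi_{1}(M)$-invariant continuous foliations of $\partial\widetilde{M}$ from \cite{Ham1} --- is a reasonable substitute, since Lemma \ref{wcs} does show the transition maps send curves tangent to $H^{u}$ to curves tangent to $H^{u}$, hence leaves into leaves.) The passage from $\mu$-a.e.\ nondegeneracy to nondegeneracy \emph{everywhere} is then not a holonomy argument at all: writing $d\alpha^{\wedge (m-2)/2} = F\cdot\zeta$ with $\zeta$ the $\tau$-volume on $H^{u}$, the transformation laws $(g^{t})^{*}\zeta=(\psi^{t})^{(m-2)/2}\zeta$ and $(g^{t})^{*}\omega=\varphi^{t}\omega$ show that $\log|F|$ is a measurable solution of the coboundary equation for the H\"older cocycle $\varphi^{t}/\psi^{t}$, and measurable Liv\v{s}ic rigidity \cite{Wal} then forces $\log|F|$ to agree a.e.\ with a continuous solution, so $|F|$ is bounded away from $0$ and $\omega$ is nondegenerate on all of $SM$. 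Some mechanism of this kind is needed in your write-up; as it stands, the central invariance claim is a genuine gap.
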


\begin{proof}
Observe first that for every $t \in \R$,
\[
(g^{t})^{*}(d\alpha) = d((g^{t})^{*}(\alpha)) = d(\varphi^{t}\alpha) = d\varphi^{t} \wedge \alpha + \varphi^{t}d\alpha
\]
and hence $(g^{t})^{*}d\alpha|H^{u} = \varphi^{t}d\alpha|H^{u}$. It follows that 
\[
R_{k} = \{p \in SM: \; \dim \{v \in H^{u}_{p}: \; \iota_{v}d\alpha = 0\} = k\}
\]
is a $g^{t}$-invariant subset of $SM$. Since $\mu$ is ergodic with respect to the action of $g^{t}$ and $\bigcup_{k=0}^{m-2}R_{k} = SM$, $\mu(R_{k}) = 1$ for some $k$. If $0 < k < m-2$, then $p \rightarrow \{v \in H^{u}_{p}: \; \iota_{v}d\alpha = 0\}$ is a proper measurable $g^{t}$ invariant subbundle of $H^{u}$, which is impossible by Lemma \ref{chypirr}. 

Suppose now that $\mu(R_{0}) = 1$. Then $d\alpha|H^{u} = 0$ for a dense set of points in $SM$ since $\mu$ has full support in $SM$, hence $d\alpha|H^{u} = 0$ for every $p \in SM$ since $d\alpha$ is continuous. This implies by the $C^{1}$ Frobenius theorem that $H^{u}$ is uniquely integrable as a subbundle of $E^{u}$ over $W^{u}$. We claim that this contradicts Corollary \ref{acc}. Fix a periodic point $p \in SM$ of period $T$ and let $\mathcal{F}$ be the $C^{1}$ foliation tangent to $H^{u}$ inside $W^{u}(p)$. By Corollary \ref{acc}, each leaf of $\mathcal{F}$ is dense in $W^{u}(p)$. 

But this is absurd, for reasons similar to those given in the proof of Lemma \ref{chypirr}. Let $U$ be a neighborhood of $p$ on which $g^{-T}$ acts as an exponential contraction and such that the foliation $\mathcal{F}$ can be trivialized as plaques $\R^{m-2} \times\{a\}$. Two different plaques of the foliation in $U$ have the property that they cannot be connected by a $C^{1}$ curve $\sigma$ lying entirely in $U$. If $\mathcal{F}(p)$ is dense in $W^{u}(p)$, we can find some slice of $\mathcal{F}$ in $U$ that does not pass through $p$ and a unit speed curve $\gamma:[0,\l] \rightarrow W^{u}(p)$ with $\gamma(0) = p$ and $\gamma(\l) = q \in U$ lying in a different plaque. Consider $g^{-kT} \circ \gamma$ for $k > 0$ large. By the definition of the unstable leaf, for $k$ large enough the entire curve $g^{-kT} \circ \sigma$ is contained inside of $U$, and by the $g^{T}$-invariance of $\mathcal{F}$, $g^{-kT} \circ \sigma$ is always contained inside of $\mathcal{F}(p)$. This implies that $g^{-kT}(\sigma(\l))$ lies in the local leaf through $p$ of $\mathcal{F}$ in $U$. The contraction property of $g^{-T}$ on $U$ implies that $g^{-kT}(U) \subset U$ is an open connected subset containing $p$, and the foliation can be trivialized on this open subset, so we can join $g^{-kT}(\sigma(\l))$ to $p$ by a curve $\sigma$ lying entirely inside of $g^{-kT}(U)$. But then $g^{kT} \circ \sigma$ is a curve joining $p$ to $\gamma(\l)$ lying entirely inside of the local leaf of $\mathcal{F}$ through $p$ inside of $U$, which is a contradiction.

The only remaining possibility is $\mu(R_{m-2}) = 1$. Hence $d\alpha_{p}|H_{p}^{u}$ is nondegenerate for $\mu$-a.e. $p \in SM$. We conclude that $m$ is even. Let $\zeta_{p}$ denote the volume element on $H^{u}_{p}$ induced by the inner product $\tau$ on $H^{u}_{P}$. Then since $d\alpha|H^{u}$ is nondegenerate $\mu$-a.e. and the vector space $\bigwedge^{m-2}H^{u}$ is 1-dimensional, there is a continuous function $F:SM \rightarrow \R$ such that 
\[
F \cdot \zeta = d\alpha^{m/2}
\]
and $F$ is nonzero on a set of full $\mu$-measure. Note that $(g^{t})^{*}\zeta = (\psi^{t})^{\frac{m}{2}}\zeta$ and therefore we conclude upon applying $(g^{t})^{*}$ to both sides of the above equation that 
\[
F(g^{t}(p)) (\psi^{t}(p))^{\frac{m}{2}} \zeta_{p} = (\varphi^{t}(p))^{\frac{m}{2}} d\alpha^{\frac{m}{2}}_{p} = F(p) (\varphi^{t}(p))^{\frac{m}{2}} \zeta_{p}
\]
and therefore at every $p \in SM$ for which $F(p) \neq 0$ and every $t \in \R$, 
\[
\frac{2}{m}\left(\log(|F|(g^{t}(p)))- \log(|F|(p))\right) = \log(\varphi^{t}(p)) - \log(\psi^{t}(p))
\]
This equation holds on a set of full $\mu$-measure (in fact on an open and dense subset of full volume in $SM$) and thus implies that $\frac{2}{m}\log |F|$ is a measurable solution to the cohomological equation for the cocycle $\frac{\varphi^{t}(p)}{\psi^{t}(p)}$. Since $\varphi^{t}$ and $\psi^{t}$ are H\"older continuous cocycles, this implies by the measurable rigidity of the cohomological equation over Anosov flows \cite{Wal} that $\frac{2}{m}\log |F|$ coincides $\mu$-a.e. with a continuous solution of the cohomological equation. But since $\frac{2}{m}\log |F|$ is continuous on the set $R_{0}$ of points at which $d\alpha$ is nondegenerate, and this is an open and dense subset of $SM$, this then implies that $\log |F|$ extends to a continuous function on $SM$. It follows that $|F| > 0$ on $SM$ and therefore that $d\alpha|H^{u}$ is nondegenerate on all of $SM$. 
\end{proof}

Put the standard coordinates $(x_{1},\dots,x_{k},y_{1},\dots,y_{k},z)$ on $\R^{m-1}$, where $k = (m-2)/2$. Let
\[
\nu = dz + \frac{1}{2}\sum_{i=1}^{k}x_{i}dy_{i} - y_{i}dx_{i}
\]
be the standard contact 1-form on $\R^{m-1}$. If we identify $\R^{m-1}$ with the Heisenberg group $G^{m}_{\C}$, then $\ker \nu$ gives the left-invariant distribution $\mathcal{T}^{m}_{\C}$. The following final lemma constructs the desired charts $\{\Psi_{p}\}_{p \in SM}$ which completes the proof of Theorem \ref{regchyp}.

\begin{lem}\label{chart}
For each $p \in SM$, there is an $r > 0$ such that there is a $C^{1}$ function $\xi_{p}: W^{u}_{r}(p) \rightarrow (0,\infty)$ and a $C^{2}$ diffeomorphism $\Psi_{p}: W^{u}_{r}(p) \rightarrow U$, $U$ a neighborhood of the identity in $\R^{m-1}$, with $\Psi_{p}^{*}(\nu) = \xi_{p}\alpha$. 
\end{lem}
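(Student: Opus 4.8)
The statement is a Darboux-type normal form for the contact form $\alpha$, and the plan is to prove it by following the classical proof of the contact Darboux theorem while keeping careful track of regularity. First, by Lemma \ref{nondeg} the leaf $W^u(p)$ is odd-dimensional, $\dim W^u(p) = m-1 = 2k+1$ with $k = (m-2)/2$, and $\alpha$ is a contact form: $\alpha\wedge(d\alpha)^k$ is a nonvanishing volume form, because $\ker\alpha = H^u$ and $d\alpha|H^u$ is nondegenerate. By Assumption (2) of Theorem \ref{regchyp} together with the construction of $\tau$ and $\alpha$, the bundle $H^u$ and hence the form $\alpha$ are $C^1$ along $W^u$ with H\"older continuous derivative; thus $\alpha$ is a $C^{1,\beta}$ contact form on the smooth manifold $W^u(p)$ for some $\beta > 0$, and $d\alpha$ is H\"older. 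Since $\ker\nu$ is the standard contact structure on $\R^{m-1} = G_\C^m$, producing the pair $(\Psi_p,\xi_p)$ amounts to producing a coorientation-preserving $C^2$ diffeomorphism $\Psi_p$ from a neighborhood of $p$ in $W^u(p)$ onto a neighborhood of the identity of $G_\C^m$ that carries the hyperplane field $H^u = \ker\alpha$ to $\ker\nu$; the factor $\xi_p$ is then the ratio $(\Psi_p^*\nu)/\alpha$ of two $1$-forms with common kernel $H^u$, which is $C^1$ and positive once $\Psi_p$ matches the coorientations of $H^u$ and $\ker\nu$ determined by $Z$ and by $\partial_z$.

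To build $\Psi_p$ we would carry out the usual Darboux construction. The line field $\ker d\alpha$ is one-dimensional and transverse to $H^u$, so one chooses a smooth hypersurface $\Sigma\ni p$ with $T_p\Sigma = H^u_p$; then $d\alpha|_\Sigma = d(\alpha|_\Sigma)$ is a closed nondegenerate $2$-form on the $2k$-dimensional $\Sigma$ with Liouville primitive $\alpha|_\Sigma$. Applying the symplectic Darboux theorem on $\Sigma$ gives coordinates making $d\alpha|_\Sigma$ standard, and absorbing the exact discrepancy between $\alpha|_\Sigma$ and its standard primitive into a transverse coordinate — obtained by flowing $\Sigma$ along the integral curves of $\ker d\alpha$, normalized to the Reeb field $R$ with $\iota_R\alpha = 1$ and $\iota_R d\alpha = 0$, along which $\alpha$ is invariant — puts $\alpha$ into the standard contact normal form near $p$. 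An explicit polynomial diffeomorphism of $\R^{m-1}$ then carries this normal form to $\nu$; after translating $p$ to the identity, composing with $z\mapsto -z$ if needed to fix the coorientation, and restricting to a small ball $W^u_r(p)$, one obtains $\Psi_p$, with $\xi_p$ as above.

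The hard part is regularity. Because $\alpha$ is only $C^{1,\beta}$, the form $d\alpha$, the line field $\ker d\alpha$, and the Reeb field $R$ are merely H\"older, so a direct run of the Moser/Reeb-flow argument produces a chart of that same low regularity rather than the claimed $C^2$; indeed even existence and uniqueness of the relevant flows need care. Extracting the missing derivative is the technical core of the lemma, and the plan is to organize the construction so that each coordinate function appears as a solution of an ordinary differential equation whose coefficients are $C^{1,\beta}$ (which gains a derivative in the flow parameter), combined with a low-regularity version of symplectic Darboux on $\Sigma$; here one exploits the smoothness of the leaf $W^u(p)$ and of the line bundle $V^u$, and the $C^{1,\beta}$ regularity of the unstable holonomies furnished by Lemma \ref{reghor}, to choose $\Sigma$ and the transverse flow direction well. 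Pushing this bookkeeping through yields a $C^2$ chart $\Psi_p$ and a $C^1$ function $\xi_p$. We note that for the only application of the lemma — verifying Assumption (2) of Theorem \ref{weakchyp} in order to invoke that theorem — a $C^1$ chart would already suffice, so the most delicate part of the regularity count is not strictly needed downstream.
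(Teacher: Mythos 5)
Your proposal correctly identifies the statement as a Darboux-type normal form and correctly locates the difficulty (the characteristic/Reeb direction of a merely $C^{1,\beta}$ contact form is only H\"older, so flow-based normalizations lose regularity), but it stops at announcing a plan to ``organize the bookkeeping'' and never supplies the mechanism that overcomes this. That mechanism is the actual content of the paper's proof, and it is dynamical rather than a regularity count: the conformal factor $\xi_{p}$ is \emph{not} read off at the end as the ratio $(\Psi_{p}^{*}\nu)/\alpha$; it is constructed \emph{first}, from the fact that the line bundle spanned by $d\alpha|H^{u}$ inside $\Lambda^{2}H^{u}$ is $g^{t}$-invariant and hence (Lemma \ref{contsub}, with $C^{1}$ regularity from Lemma \ref{reghor}) invariant under unstable holonomy, via $(h^{u}_{pq})^{*}d\alpha = \xi_{p}(q)\,d\alpha$ on $H^{u}$. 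One then proves, using the cocycle relation $(g^{t})^{*}(\xi_{p}\alpha) = \varphi^{t}(g^{-t}p)\,\xi_{g^{-t}p}\,\alpha$, a uniform angle bound for the Reeb field $Z_{p}$ of $\xi_{p}\alpha$, and the identity $Z_{p} = \varphi^{-t}(p)Dg^{t}(Z_{g^{-t}p})$ together with the domination of the splitting, that $Z_{p}$ is parallel to $V^{u}$. This is exactly what your sketch is missing: for the unrescaled $\alpha$ the kernel of $d\alpha$ need not be $V^{u}$ and is no better than H\"older, and rescaling changes the kernel ($d(\xi\alpha)=d\xi\wedge\alpha+\xi\,d\alpha$), so ``choosing the transverse flow direction well'' is precisely the nontrivial claim that some rescaling aligns the characteristic direction with the smooth bundle $V^{u}$ --- it is proved, not arranged.

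Once that is in hand, the rest of the paper's argument is not a low-regularity contact Moser scheme at all: since the characteristic foliation of $\xi_{p}\alpha$ is the smooth foliation $W^{vu}$, the form $d(\xi_{p}\alpha)$ (which is $C^{1}$, being holonomy-invariant on $H^{u}$, vanishing on $V^{u}$, with the splitting $C^{1}$) descends to a $C^{1}$ symplectic form on the smooth quotient $Q^{u}(p)$; one applies the symplectic Darboux theorem there to get a $C^{2}$ map $F$, lifts it using the smooth $W^{vu}$-trivialization, and corrects the transverse coordinate by an explicit primitive $f(x)=\int_{0}^{1}\beta(tx)\,dt$ of the closed $1$-form $\beta=\phi^{*}\nu-\xi_{p}\alpha$, verifying $\Psi_{p}^{*}\nu=\xi_{p}\alpha$ by a path-integral argument over radial curves. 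Your hypersurface-plus-Reeb-flow outline, with the flow direction only H\"older and no proof that a better direction exists, cannot produce the chart (even the $C^{1}$ chart you correctly note would suffice for Theorem \ref{weakchyp}), so as written the proposal has a genuine gap at its central step.
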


\begin{proof}
The line bundle spanned by $d\alpha|H^{u}$ inside of $\Lambda^{2}H^{u}$ is invariant under $g^{t}$ and therefore by Lemma \ref{contsub} is invariant under unstable holonomy. Hence for every point $p \in SM$ there is a $C^{1}$ function $\xi_{p}: W^{u}(p) \rightarrow (0,\infty)$ defined by
\[
(h^{u}_{pq})^{*}d\alpha = \xi_{p}(q)d\alpha
\]
where $d\alpha$ is restricted to $H^{u}$ (here we again use Lemma \ref{reghor} for the $C^{1}$ regularity assertion). The holonomy relation $Dg^{t}_{q} \circ h^{u}_{pq} =  h^{u}_{g^{t}p g^{t}q} \circ Dg^{t}_{q}$ implies that $\varphi^{t}(q)\xi_{p}(q) = \xi_{g^{t}(p)}(g^{t}(q))\varphi^{t}(p)$, since $d\alpha|H^{u}$ scales by $\varphi^{t}$ when acted on by $Dg^{t}$. 

Consider the $C^{1}$ 1-form $\xi_{p}\alpha$ on $W^{u}_{r}(p)$. For $q \in W^{u}(g^{-t}p)$, $X \in E^{u}_{q}$, 
\[
\xi_{p}(g^{t}(q))\alpha(Dg^{t}X) = \xi_{p}(g^{t}(q))\varphi^{t}(q)\alpha(X) = \xi_{g^{-t}p}(q)\varphi^{t}(g^{-t}(p))\alpha(X)
\]
Hence $(g^{t})^{*}(\xi_{p}\alpha) = \varphi^{t}(g^{-t}(p))\xi_{g^{-t}(p)}\alpha$. Taking the exterior derivative of each side, we get
\[
(g^{t})^{*}(d(\xi_{p}\alpha)) = \varphi^{t}(g^{-t}(p))d(\xi_{g^{-t}(p)}\alpha)
\]
Let $Z_{p}$ be the Reeb vector field for $\xi_{p}\alpha$ on $W^{u}(p)$ defined by 
\[
\xi_{p}\alpha(Z_{p}) = 1
\] 
\[
d(\xi_{p}\alpha)(Z_{p},X) = 0\; \text{for every $X \in H^{u}$}.
\]
From the exterior derivative relation we obtain
\[
Dg^{t}(Z_{p}) = \varphi^{t}(p)Z_{g^{t}(p)},
\]
and thus we can write
\[
Z_{p} = \varphi^{-t}(p)Dg^{t}(Z_{g^{-t}(p)}).
\]
We claim that there is a small $\delta > 0$ such that there is a constant $\gamma > 0$ independent of $p$ such that $Z_{p}$ makes an angle of at least $\gamma$ with $H^{u}$ on $W^{u}_{\delta}(p)$.  Note that for every $p \in SM$ we have $Z_{p} \notin H^{u}$ since $d(\xi_{p}\alpha)|H^{u} = d\alpha|H^{u}$ is nondegenerate.

We first show that there is some $\gamma > 0$ such that $Z_{p}(p)$ makes an angle of at least $2 \gamma$ with $H^{u}$. If this did not hold, then we could find a sequence of points $p_{n} \in SM$ converging to $p \in SM$ with $Z_{p_{n}}/\|Z_{p_{n}}\| \rightarrow Y \in H^{u}_{p}$. Let $\delta > 0$ be small enough that the unstable holonomy $h^{u}$ of $H^{u}$ has uniformly H\"older continuous derivative on $W^{u}_{\delta}(q)$ for every $q \in SM$. Then the unstable holonomies and their derivatives on $W^{u}_{\delta}(p_{n})$ converge uniformly to the unstable holonomy and its derivative on $W^{u}_{\delta}(p)$. It follows that the $C^{1}$ 1-forms $\xi_{p_{n}}\alpha$ on $W^{u}_{\delta}(p_{n})$ converge uniformly in the $C^{1}$ topology to the 1-form $\xi_{p}\alpha$ on $W^{u}_{\delta}(p)$. In particular $d(\xi_{p_{n}}\alpha)$ converges uniformly to $d(\xi_{p}\alpha)$. Hence $Z_{p_{n}} \rightarrow Z_{p}$ uniformly in $n$. But the assumption that $Z_{p_{n}}/\|Z_{p_{n}}\| \rightarrow Y \in H^{u}_{p}$ implies that $Z_{p} \in H^{u}_{p}$, which is impossible. 

We first show that there is some $\gamma > 0$ such that $Z_{p}(p)$ makes an angle of at least $2 \gamma$ with $H^{u}$. Hence, since the unstable holonomy has uniformly H\"older derivative on $W^{u}_{\delta}(p)$, there is some $r > 0$ independent of $p$ such that $Z_{p}$ makes an angle of at least $\gamma$ with $H^{u}$ on $W^{u}_{r}(p)$. Since for $t > 0$ large enough $g^{-t}(W^{u}_{r}(p)) \subset W^{u}_{r}(g^{-t}p)$, $Z_{g^{-t}(p)}$ makes an angle of at least $\gamma$ with $H^{u}$ on $g^{-t}(W^{u}_{r}(p))$. Thus the expression 
\[
Z_{p} = \varphi^{-t}(p)Dg^{t}(Z_{g^{-t}(p)})
\] 
implies that $Z_{p}$ is parallel to $V^{u}$, since as $t \rightarrow \infty$ the angle between $V^{u}$ and $Dg^{t}(Z_{g^{-t}(p)})$ converges uniformly to zero on $W^{u}_{r}(p)$. Here we need that the angle of $Z_{g^{-t}(p)}$ with $H^{u}$ is uniformly bounded away from zero on $g^{-t}(W^{u}_{r}(p))$.

Thus $Z_{p}$ is parallel to $V^{u}$ and so $d(\xi_{p}\alpha)$ vanishes on $V^{u}$. Since $d(\xi_{p}\alpha)|H^{u} = \xi_{p}d\alpha$, we conclude that $d(\xi_{p}\alpha)|H^{u}$ is invariant under unstable holonomy. In particular it is $C^{1}$. Since the splitting $E^{u} = H^{u} \oplus V^{u}$ is $C^{1}$, we get that $d(\xi_{p}\alpha)$ itself is $C^{1}$. 

We conclude that $\xi_{p}\alpha$ is a $C^{1}$ contact form on $W^{u}_{loc}(p)$ with $d(\xi_{p}\alpha)$ also being $C^{1}$. The foliation associated to the Reeb vector field $Z_{p}$ is the strong unstable foliation $W^{vu}$, which is smooth. $d(\xi_{p}\alpha)$ descends to a $C^{1}$ closed 2-form $\omega$ on the quotient $Q^{u}(p)$ of $W^{u}$ by the $W^{vu}$ foliation. 

Recall that $\Pi: W^{u}(p) \rightarrow Q^{u}(p)$ denotes projection. Darboux's theorem for symplectic forms implies that there is a neighborhood $U$ of $\Pi(p)$ and a $C^{2}$ diffeomorphism $F: U \rightarrow L$ onto a neighborhood $L \subset \R^{m-2}$ of 0 which pulls back the standard symplectic form $d\nu$ on $L$ to $\omega$ on $U$. Choose a smooth transversal $K$ to the $W^{vu}$ foliation of $W^{u}(p)$ with $\Pi(K) = U$ and $p \in K$. Let $\wt{U}$ be the open neighborhood of $p$ given by taking, for each $q \in K$, an open neighborhood of $q$ in $W^{vu}(q)$. Put coordinates on $\wt{U}$ by smoothly identifying $K$ with a neighborhood of 0 in $\R^{m-2}$, and smoothly trivializing the $W^{vu}$ foliation to the map to the $z$ coordinate axis (the last coordinate) in a neighborhood of 0 in $\R^{m-1}$. 

Using these coordinates we lift $F$ to a $C^{1}$ diffeomorphism $\Psi_{p}: \wt{U} \rightarrow \wt{L}$ of a neighborhood $\wt{U}$ of $p \in W^{u}(p)$ to a neighborhood $\wt{L}$ of 0 in $\R^{m-1}$. Since a contact form is invariant under its Reeb flow, the coordinate representation of $\xi_{p}\alpha$ in the above defined coordinates on $\wt{U}$ is constant along the $z$-axis. It follows that $d(\xi_{p}\alpha)$ is also constant along the $z$-axis. Define $\phi: \wt{U} \rightarrow \wt{L}$ in these coordinates by $\phi(x,z) = (F(x),z)$, where $x \in U$. Then $\phi^{*}(d\nu) = d(\xi_{p}\alpha)$ and therefore the 1-form $\beta = \phi^{*}(\nu) -\xi_{p}\alpha$ is closed. Define for $x \in U$,
\[
f(x) =\int_{0}^{1}\beta(tx) \, dt
\] 
Note that the $z$-component of $\beta$ is 0 since $\xi_{p}\alpha$ is independent of the $z$-coordinate and $\phi$ is the identity on the $z$-coordinate. Note also that $f$ does not depend on $z$. 

Set $\Psi_{p}(x,z) = (\phi(x),z-f(x))$. We claim that $\Psi_{p}^{*}(\nu) = \xi_{p}\alpha$. Let $\gamma: [0,1] \rightarrow \wt{U}$ and let $\sigma_{t}$, $t \in [0,1]$, be the unique radial curve (when projected to $U$) tangent to $\ker \alpha$ joining a point on the $z$-axis to $\gamma(1)$. We thus obtain a continuous map $(t,s) \rightarrow \sigma_{t}(s)$ of $[0,1]^{2}$ into $\wt{U}$, which we will denote by $\sigma$.

Now consider the lift of a radial curve $\eta: s \rightarrow sx$ ($x \in U$) to a curve $\tilde{\eta}$  tangent to $\ker \alpha$. The $z$-coordinate of $\tilde{\eta}$ is then given by $\int_{0}^{t}-\xi_{p}(sx)\alpha(sx) \, ds$. Then 
\[
\Psi(\tilde{\eta}(t)) = \left(\phi(tx), \int_{0}^{t}-\xi_{p}(sx)\alpha(sx) \, ds-f(tx)\right) = \left(\phi(tx), \int_{0}^{t}-\phi^{*}\nu(sx) \, ds\right)
\]
which is the lift of the curve $s \rightarrow \phi(sx)$ in $B(r)$ to a curve tangent to $\ker \nu$. We thus conclude that $\Psi$ maps lifts of radial curves tangent to $\ker \alpha$ to curves tangent to $\ker \nu$. We also note that we still have $\Psi_{p}^{*}(d\nu) = d(\xi_{p}\alpha)$: both $d\nu$ and $d(\xi_{p}\alpha)$ vanish on vectors parallel to the $z$-axis, and $D\Psi_{p}$ has the same action as $D\phi$ on the components of the vectors lying in $U$. As a consequence, the 1-form $\kappa = \Psi_{p}^{*}(\nu) - \xi_{p}\alpha$ is closed. This implies that the path integral of $\sigma^{*}\kappa$ around the boundary of $[0,1]^{2}$ is zero. We just showed that on radial curves, $\ker \Psi_{p}^{*}\nu = \ker \alpha$ , so that the path integral of $\kappa$ over $\sigma_{0}$ and $\sigma_{1}$ is zero. The curve $t \rightarrow \sigma_{t}(0)$ is tangent to the $z$-axis, and $\Psi^{*}(\nu) - \xi_{p}\alpha$ vanishes on the $z$-axis, since $\Psi$ restricts to the identity on the $z$-axis and $\xi_{p}\alpha(\p_{z}) = \nu(\p_{z}) = 1$, where $\p_{z}$ is the coordinate vector field parallel to the $z$-axis. Thus we conclude that 
\[
\int_{\gamma} \kappa = 0
\]
Since this holds for every curve $\gamma$ in $\wt{U}$, we conclude that $\kappa = 0$, so that $\Psi_{p}^{*}(\nu) = \xi_{p}\alpha$.
\end{proof} 
\end{section}
\bibliographystyle{plain}
\bibliography{Clarkbibtex}
\end{document}